\numberwithin{equation}{section}
\newcommand{\propref}[1]{Proposition~\ref{#1}}
\newcommand{\eqnref}[1]{~(\ref{#1})}
\newtheorem{thm}{Theorem}[section]
\newtheorem{lem}[thm]{Lemma}
\newtheorem{prop}[thm]{Proposition}
\newtheorem{defi}[thm]{Definition}
\newtheorem{definition}[thm]{Definition}
\newtheorem{remark}[thm]{Remark}
\newtheorem{theorem}[thm]{Theorem}
\newtheorem{lemma}[thm]{Lemma}
\newtheorem{proposition}[thm]{Proposition}
\newtheorem{corollary}[thm]{Corollary}
\newcommand{\C}{\ensuremath{\mathbb{C}}\xspace}
\newcommand{\Z}{\ensuremath{\mathbb{Z}}\xspace}
\newcommand{\ad}{\operatorname{ad}\xspace}
\renewcommand{\phi}{\varphi}
\def\D{\Delta}
\def\cD{\mathcal D}
\begin{document}
\subjclass[2010]{Primary 17B67, 81R10}
\keywords{free field realizations, localization}
\title{Localization of free field realizations of affine Lie algebras}
\author{Vyacheslav Futorny, Dimitar Grantcharov, Renato A. Martins }

\address{\noindent
Instituto de Matem\'atica e Estat\'istica, Universidade de S\~ao
Paulo,  S\~ao Paulo SP, Brasil} \email{futorny@ime.usp.br,
renatoam@ime.usp.br}
\address{\noindent
University of Texas at Arlington,  Arlington, TX 76019, U.S.} \email{grandim@uta.edu}

\date{}

\begin{abstract}
We  use localization technique to construct new families of
irreducible modules of affine Kac-Moody algebras. In particular,
localization is applied to the first  free field realization of the affine
Lie algebra $A_1^{(1)}$ or, equivalently, to imaginary Verma modules.
\end{abstract}
\maketitle

\section{Introduction}

Free field realizations of affine Kac-Moody algebras play important role in representation theory and conformal field theory. Free field realizations of the affine Lie  algebra $A_1^{(1)}$ appeared first in \cite{JK} for zero central charge and in \cite{W} for arbitrary central charge. Using geometric methods, these constructions were generalized in \cite{BF} for arbitrary Borel subalgebras, and in \cite{FF1}, \cite{FF2} for higher rank affine Lie algebras. 

Among the Borel subalgebras of  $A_1^{(1)}$, there is one of special interest - the so called {\em natural} Borel subalgebra. The Verma modules corresponding to the natural Borel subalgebra are usually referred as {\em imaginary} Verma modules. Contrary to the classical Verma modules, the  imaginary Verma modules have both finite and infinite weight multiplicities and much more complicated structure (\cite{BF}, \cite{C1}, \cite{F1}, \cite{FS}, \cite{JK}). The free field realizations of imaginary Verma modules are normally referred as {\em first free field realizations} of $A_{1}^{(1)}$. A natural way to construct such realizations for an affine Lie algebra ${\mathfrak g}$ is to consider an embedding of  $\mathfrak g$ to an algebra of differential operators of the corresponding semi-infinite flag manifold $G/B_{\rm nat}$, see \cite{FF1}. In the case $\mathfrak g = A_1^{(1)}$ this leads to a homomorphism $U(A_1^{(1)}) \to {\mathcal A} ({\bf x}, {\bf y})$, where ${\mathcal A} ({\bf x}, {\bf y})$ is the  algebra of formal power series of differential operators on ${\mathbb C} [{\bf x}, {\bf y}]$, and  ${\mathbb C} [{\bf x}, {\bf y}]$ is the ring of polynomials of $x_i$, $i \in {\mathbb Z}$, $y_j$, $j >0$.

In this paper we apply localization technique to the first free field realization of $A_1^{(1)}$ and obtain a new family of free field realizations. The twisted localization functor provides  new examples
of irreducible weight modules that are dense (i.e. with maximal sets of weights) and have  infinite weight multiplicities. By applying a parabolic induction to such modules we can construct new families of irreducible dense modules 
with infinite weight multiplicities for any affine Lie algebra. These families contribute to the classification problem of irreducible weight modules of affine Lie algebras, a problem which is still largely open. Note that the  problem of classifying the irreducible weight modules with  finite weight  multiplicities was completely solved in \cite{FT} in the case of a nonzero central charge, and in \cite{DG} in the case of a zero central charge.    
Examples of weight modules with infinite-dimensional weight spaces have been studied also in \cite{CM}, and although the modules in \cite{CM} are over  the Virasoro algebra, they are similar in nature to the ones obtained in the present paper.

Localization with respect to an Ore subset can be applied to the second free field realization (Wakimoto modules), which will be treated in a subsequent publication.

The present paper  can also be considered as a first step towards the study of the weight modules of  $A_{1}^{(1)}$ coming from weight modules of ${\mathcal A} ({\bf x}, {\bf y})$. The simple weight modules of the algebra ${\mathcal A}^{\rm poly} ({\bf x}, {\bf y})$ of polynomial differential operators of ${\mathbb C} [{\bf x}, {\bf y}]$ were classified in \cite{FGM}. These modules are modules of ${\mathcal A} ({\bf x}, {\bf y})$ and can be realized as quotients of twisted localizations of the defining module ${\mathbb C} [{\bf x}, {\bf y}]$. In the present paper we address the question when  a simple weight ${\mathcal A}^{\rm poly} ({\bf x}, {\bf y})$-module coming from one $x_i$-localization and from finitely many $y_j$-localizations of  ${\mathbb C} [{\bf x}, {\bf y}]$ remains simple when considered as an $A_{1}^{(1)}$-module. 

The functor mapping a module $M$ to the quotient 
$\mathcal D(M)/M$, where ${\mathcal D}$ is the localization functor corresponding to a real root, plays a central role in our paper. It is interesting to note that the functor $ M \mapsto {\mathcal D} (M)/M$ can be considered as a variation of an Arkhipov functor. For finite-dimensional semi-simple Lie algebra, the latter is a composition of the former with a twist of an element of the Weyl group, \cite{Ark1}. Properties of these functors were studied also in \cite{E}. Generalized versions of these functors for affine Lie algebras and for quantum groups were introduced in \cite{Ark2} and \cite{CE1, CE2}, respectively.

The organization of the paper is as follows. In Section 3 we collect some preliminary results on the first free field realization and twisted localization of weight $A_{1}^{(1)}$-modules. Our main results for the first free field realization modules are formulated in Section 4, while the proofs are provided in Section 5. The statements and the proofs of the main results in terms of imaginary Verma modules are presented in Section 6.

\section{Notation and conventions}

The ground field will be ${\mathbb C}$. We let $\mathbb N$ denote the set of positive integers. For a Lie algebra ${\mathfrak a}$ by $U(\mathfrak a)$ we denote the universal enveloping algebra of ${\mathfrak a}$.

We fix $(\, \, , \, )$ to  be the Killing form of the Lie algebra $\mathfrak{sl}_2$. Consider the affine Lie algebra
$$\mathfrak{g}=\widehat{\mathfrak{sl}}_2=\left( \mathfrak{sl}_2\otimes
\mathbb C [t, t^{-1}] \right) \oplus \mathbb C c\oplus \mathbb C d,$$ where
$c$ is the central element, and $d$ is the degree derivation. The
commutation relations on $\mathfrak{g}$ are
$$
[a \otimes t^m, b \otimes t^n] = [a,b] \otimes t^{m+n} + \delta_{m, -n} m \, (a,b) c, \quad
[d, a \otimes t^m] = m a \otimes t^m,  \quad [c, {\mathfrak g}] =0,
$$
where $a, b \in {\mathfrak g}$, $m, n \in \Z$, and $\delta_{i,j}$ is Kronecker's delta. 

We fix a positive root $\beta$  and a standard basis $e,f,h$  of  $\mathfrak{sl}_2$.

Then
$$e_n=e\otimes t^n, \, h_n=h\otimes t^n, \,
f_n=f\otimes t^n, n\in \Z,$$ together with $c$ and $d$,  form a basis of
$\widehat{\mathfrak{sl}}_2$. Furthermore, $\mathfrak{h}=\mathbb C h_0 \oplus \mathbb C
c\oplus \mathbb C d$ is a Cartan subalgebra of
$\widehat{\mathfrak{sl}}_2$.  Denote by $\Delta $ the root system of
$\widehat{\mathfrak{sl}}_2$ corresponding to $(\widehat{\mathfrak{sl}}_2, \mathfrak{h})$. Let $\delta \in {\mathfrak h}^*$ be defined by $\delta |_{\mathbb C h_0 \oplus \mathbb C
c} = 0$ and $\delta(d) = 1$.  Then $\Delta= \D^{\mathsf {re}} \cup \D^{\mathsf {im}}$, where  $\D^{\mathsf {im}} =  \{n \delta \, | \,  n \in \Z, n \neq 0\}$ and $\Delta^{\mathsf{re}} =  \{ \pm\beta + n\delta\ |\ n \in \Z\}$.

We will  use  ${\bf x}$ to denote the set $\{ x_n \: | \; n \in \Z\}$ and ${\bf y}$ for the set $\{ y_m \: | \; m \in {\mathbb N}\}$. More generally, for a subset $I$ of $\Z$, ${\bf x}_I:=\{ x_n \: | \; n \in I\}$. Similarly, we define ${\bf y}_S$ for $S \subset \mathbb N$. Furthermore, for $I \subset \Z$ and $S \subset \mathbb N$ we set $\widehat{\bf x}_I:= \{ x_n \: | \; n \in \Z, n \notin I\}$ and $\widehat{\bf y}_S:= \{ y_m \: | \; m \in {\mathbb N}, m \notin S\}$. We write $\widehat{\bf x}_i$ for $\widehat{\bf x}_{\{ i\}}$ and $\widehat{\bf y}_j$ for $\widehat{\bf y}_{\{ j\}}$.

Let $\C [{\bf x}, {\bf y}] = \C [x_n, y_m \, | \,  n \in \Z, m \in \mathbb N]$ (polynomials in ${\bf x}$ and ${\bf y}$) and $\C [{\bf x}^{\pm1}, {\bf y}^{\pm1}] = \C [x_n^{\pm1}, y_m^{\pm1} \, | \,  n \in \Z, m \in \mathbb N]$ (Laurent polynomials in ${\bf x}$ and ${\bf y}$). More generally, we write $\C [{\bf x}_I, {\bf x}_{I'}^{\pm1},  {\bf y}_S, {\bf y}_{S'}^{\pm1}] $ for the ring of polynomials in ${\bf x}_I, {\bf y}_S$ and Laurent polynomials in ${\bf x}_{I'}, {\bf y}_{S'}$. For convenience we will also use the expressions $\widehat{\bf x}_I^{\pm1}$ and $\widehat{\bf y}_S^{\pm1}$ when needed. For example $\C [x_i, \widehat{\bf x}_i^{\pm1}, {\bf y}^{\pm1}] = \C [x_i, {\bf x}_{{\mathbb Z} \setminus \{ i\}}^{\pm1}, {\bf y}^{\pm1}] $.

Denote by ${\mathcal A} ({\bf x}, {\bf y})$ the algebra of formal power series of  differential operators on $\C [{\bf x}, {\bf y}] $. Namely, the elements of ${\mathcal A} ({\bf x}, {\bf y})$ are $\sum_{I,J} P_{I,J} \frac{\partial}{\partial {\bf x}_I}\frac{\partial}{\partial {\bf y}_J}$ where for $I=(i_1,...,i_n)$ and $J = (j_1,...,j_m)$,  $\frac{\partial}{\partial {\bf x}_I}=\frac{{\partial}^n}{\partial x_{i_1} \cdots \partial x_{i_n}}$,$\frac{\partial}{\partial {\bf y}_J} =  \frac{{\partial}^m}{\partial y_{j_1} \cdots \partial y_{j_m}}$, and $P_{I,J} \in \C [{\bf x}, {\bf y}]$. We will write  $\partial x_i$ and $\partial y_j$ for $\frac{\partial}{\partial x_i}$ and $\frac{\partial}{\partial y_j}$, respectively.

\section{Preliminaries} \label{sec-prel}

A $\mathfrak{g}$-module $V$ is called a {\em weight module} if $V=\bigoplus_{\lambda\in \mathfrak{h}^*}V_{\lambda}$, where 
$V_{\lambda}=\{v\in V| zv=\lambda(z)v, \forall z\in \mathfrak{h}\}$. The set of all  $\lambda\in \mathfrak{h}^*$ for which 
$V_{\lambda}\neq 0$ is called the {\em support} of $V$. If $V$ is irreducible then its support is a subset of a fixed coset of 
$\mathfrak{h}^*/Q$ where $Q = {\Z} \beta + {\Z} \delta$ is the root lattice. In the case when the support of $V$ is  a full coset $\lambda + Q$ of  $\mathfrak{h}^*/Q$, we will call $V$ a {\em dense module}. A module on which all real root elements $e_n$ and $f_n$ act  injectively will be called {\em torsion free}. Clearly, every torsion free module is dense, but the converse is not necessarily true. For counterexamples see Theorems \ref{main5} and \ref{main6}.

\subsection{Imaginary Verma Modules}
Throughout the paper we fix the   triangular decomposition $\mathfrak g = {\mathfrak g}_{-} \oplus {\mathfrak h}\oplus {\mathfrak g}_+$ of $\mathfrak g$, where
$${\mathfrak g}_{\pm}= \left( \bigoplus_{n\in \mathbb Z}{\mathfrak g}_{\pm \beta +n\delta}\right) \oplus \left( \bigoplus _{m\in \mathbb N}{\mathfrak g}_{\pm m\delta}\right) .$$ 

Let $\lambda \in \mathfrak h^*$.  We endow $\mathbb C$ with a $U({\mathfrak g}_{+} \oplus { \mathfrak h})$-module structure by  setting
$(a+b)\cdot 1 = \lambda(b) 1$, $ a\in {\mathfrak g}_{+}, b \in {\mathfrak h}$, and denote the corresponding module by $\mathbb C_{\lambda}$. The induced module
$
M(\lambda) = U(\mathfrak g) \otimes_{U({\mathfrak g}_{+} \oplus { \mathfrak h})}\mathbb C_{\lambda} 
$
is called the {\it imaginary Verma module}  of $\mathfrak g$ of highest weight $\lambda$.  The following summarize  some important properties of the imaginary Verma modules (see Proposition 1 and Theorem 1 in \cite{F1}).
\begin{prop}[\cite{F1}]\label{prop-imag}
Let $\lambda \in \frak h^*$. Then $M(\lambda)$ has the following properties.
\begin{enumerate}
\item[(i)]  $M(\lambda)$ has a unique maximal submodule.
\item[(ii)]   We have $\dim M(\lambda)_{\lambda} = 1$ and $0< \dim M(\lambda)_{\lambda - k\delta} < \infty$  for any positive integer $k$.  If
$\mu \neq \lambda - k \delta$ for any integer $k \ge 0$  and
$\dim M(\lambda)_{\mu} \neq 0$  then $\dim M(\lambda)_{\mu} = \infty$.
\item[(iii)]   The module $M(\lambda)$ is irreducible if and only if
$\lambda(c) \neq 0$.
\item[(iv)]  Let $\lambda(c)=0$. Then $\mathfrak M=U(\mathfrak g)\left( \sum_{k\in \mathbb N}M_{\lambda - k \delta}\right) $ 
is a proper submodule of $M(\lambda)$. Moreover, $M(\lambda)/{\mathfrak M}$ is irreducible if and only if $\lambda(h_0)\neq 0$.
\end{enumerate}
\end{prop}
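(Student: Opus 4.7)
The plan is to exploit the PBW isomorphism $M(\lambda)\cong U(\mathfrak{g}_-)\otimes\mathbb{C}_\lambda$, with cyclic generator $v_\lambda=1\otimes 1$, and the explicit weight grading of $\mathfrak{g}_-$: an ordered PBW basis uses $\{f_n\}_{n\in\mathbb{Z}}$ of weight $-\beta+n\delta$ together with $\{h_{-m}\}_{m\in\mathbb{N}}$ of weight $-m\delta$.

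Part (i) follows immediately from the fact that $\dim M(\lambda)_\lambda=1$: every proper submodule avoids $v_\lambda$, so the sum of all proper submodules is again proper and is the unique maximal one. For (ii), a PBW monomial of weight $-k\delta$ with $k>0$ involves no $f$'s (otherwise the $\beta$-component would not vanish) and is a product of $h_{-m}$'s whose indices form a partition of $k$, giving dimension $p(k)$, finite and positive. A monomial of weight $-j\beta-k\delta$ with $j\geq 1$ must contain exactly $j$ factors $f_{n_1},\dots,f_{n_j}$ with $n_1+\cdots+n_j=-k$, together with arbitrarily many $h$'s; there are infinitely many such choices, hence the infinite multiplicity.

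For (iii) with $\lambda(c)\neq 0$, I plan to show that every nonzero $v\in M(\lambda)$ satisfies $U(\mathfrak{g})v\ni v_\lambda$. Order PBW monomials in $U(\mathfrak{g}_-)$ by total length and pick a leading term in $v$. The key relations are $h_m h_{-m} v_\lambda=2m\lambda(c)v_\lambda$ and $e_n f_{-n} v_\lambda=(\lambda(h_0)+n\lambda(c))v_\lambda$, both of which are nonzero for all but finitely many values of $m$ or $n$ when $\lambda(c)\neq 0$. Acting by a carefully chosen linear combination of $h_m$'s and $e_n$'s strips off a leading $h_{-m}$- or $f_{-n}$-factor while secondary commutator contributions, being of strictly smaller length, are controlled by induction. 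Iterating reaches $\mathbb{C}v_\lambda$. Conversely, when $\lambda(c)=0$, part (iv) exhibits a proper nonzero submodule, so $M(\lambda)$ is reducible.

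For (iv) with $\lambda(c)=0$, each generator $h_{-m_1}\cdots h_{-m_\ell}v_\lambda$ of $\sum_{k\geq 1}M(\lambda)_{\lambda-k\delta}$ is annihilated by $\mathfrak{g}_+$: indeed $[h_m,h_{-k}]=2m\delta_{m,k}c$ vanishes, and $e_n$ commutes through the $h$'s to produce further $e$'s that kill $v_\lambda$. Hence $\mathfrak{M}$ is a submodule, and it is proper because $v_\lambda\notin\mathfrak{M}$ on weight grounds. For the quotient $V=M(\lambda)/\mathfrak{M}$ a basis is provided by the images of $f_{n_1}\cdots f_{n_j}v_\lambda$, and the computation $e_n f_{n_1}\cdots f_{n_j}v_\lambda\equiv\sum_i f_{n_1}\cdots\widehat{f}_{n_i}\cdots f_{n_j}h_{n+n_i}v_\lambda\pmod{c}$ contributes in $V$ only when $n+n_i=0$, each such contribution being proportional to $\lambda(h_0)$. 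If $\lambda(h_0)\neq 0$, a stripping argument parallel to (iii) produces $v_\lambda+\mathfrak{M}$ from any nonzero element of $V$, so $V$ is irreducible. If $\lambda(h_0)=0$, the same computation gives $e_n\cdot f_{-n}v_\lambda\in\mathfrak{M}$ for every $n$, so $f_0 v_\lambda+\mathfrak{M}$ generates a proper nontrivial submodule of $V$, and $V$ is reducible.

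The hard part is the bookkeeping in the stripping arguments of (iii) and of the irreducibility half of (iv): one must verify that the action of a judiciously chosen element of $U(\mathfrak{g}_+)$ strictly decreases a length statistic on PBW monomials without reintroducing longer terms, and the linear-algebra input enabling isolation of a leading monomial is exactly the nondegeneracy $\lambda(c)\neq 0$ in (iii) and $\lambda(h_0)\neq 0$ in (iv).
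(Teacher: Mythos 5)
You should first note that the paper does not prove \propref{prop-imag} at all: it is quoted from \cite{F1}, so there is no in-house proof to match. Judged on its own terms, your argument for (i) and (ii) is complete and correct (the weight-space argument for uniqueness of the maximal submodule, the partition count $p(k)$ for $\dim M(\lambda)_{\lambda-k\delta}$, and the infinitude of the other multiplicities are all fine), and so are the reducibility directions of (iii) and (iv) and the verification that $\mathfrak M$ is a proper submodule (your phrase ``on weight grounds'' is legitimate only because you first checked that $\mathfrak g_+$ annihilates the generators, so that $\mathfrak M=U(\mathfrak g_-)\bigl(\sum_{k\ge 1}M_{\lambda-k\delta}\bigr)$).

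The gap is in the irreducibility claims, which are the substance of (iii) and of half of (iv), and it is exactly the step you label ``bookkeeping.'' Two concrete problems. First, your proposed invariant, total PBW length, does not strictly decrease: $e_n(f_{n_1}uv_\lambda)=h_{n+n_1}uv_\lambda+\cdots$ with $u\in U(\mathfrak h_-)$, $\mathfrak h_-:=\bigoplus_{m>0}\mathfrak g_{-m\delta}$, and for $n+n_1<0$ the leading output has the same length as the input (an $f$ traded for an $h$), so the induction as stated does not close. The repair is to induct first on the number of $f$-factors --- which genuinely does drop under any $e_n$, by the $\beta$-grading --- and then, once inside $U(\mathfrak h_-)v_\lambda$, to invoke that the Heisenberg subalgebra $\bigoplus_{m\neq 0}\mathfrak g_{m\delta}\oplus\mathbb C c$ acts there as the irreducible Fock representation precisely when $\lambda(c)\neq 0$. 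Second, and more seriously, you never prove the nonvanishing that drives the induction: that for every nonzero $v$ outside $\mathbb C v_\lambda$ some $e_n$ or $h_m$ sends $v$ to a nonzero element of strictly smaller invariant. Asserting that ``secondary commutator contributions are controlled by induction'' is the entire difficulty, because those contributions can cancel the leading term for particular $n$. The standard fix is to take $n$ large relative to every index occurring in $v$ and isolate the coefficient of the newly created variable; in the free-field picture this is exactly \lemref{e-n} of the paper, and the paper's proof of \propref{prop-main}(i) (specialized to $S=\emptyset$, via \lemref{cor-deg-0} for the $\deg_x=0$ step) together with \thmref{th-im-ver} constitutes a complete proof of (iii). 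Finally, your displayed congruence in (iv) is wrong as written: moving $h_{n+n_i}$ to the right past $f_{n_{i+1}},\dots,f_{n_j}$ creates terms $-2\,f_{n+n_i+n_{i'}}\prod_{l\neq i,i'}f_{n_l}v_\lambda$ that survive in $V=M(\lambda)/\mathfrak M$ and are not proportional to $\lambda(h_0)$. They happen not to hurt your reducibility conclusion for $\lambda(h_0)=0$ (they stay in the span of $f$-monomials of positive length), but they are precisely the terms the irreducibility argument for $\lambda(h_0)\neq 0$ must control, so that half of (iv) has the same unproved core as (iii).
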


\subsection{First free field realization} \label{sec-fffr}
Consider the following map from $\widehat{\mathfrak{sl}}_2$ to the algebra ${\mathcal A} ({\bf x}, {\bf y})$.\\
$\begin{aligned} f_n\mapsto  x_n,
\end{aligned}$\\
$\begin{aligned}
h_n\mapsto -2\sum_{m\in \Z}x_{m+n}\partial x_{m}
+\delta_{n<0}y_{-n} + \delta_{n>0}2nK\partial y_{n}
+\delta_{n,0}J,
\end{aligned}$\\
$\begin{aligned}e_n\mapsto -\sum_{m,k\in \Z}x_{k+m+n}\partial x_k\partial x_{m}
+\sum_{k>0}y_{k}\partial x_{-k-n}+ 2K\sum_{m>0}m\partial
y_m\partial x_{m-n} +(Kn+J)\partial x_{-n},
\end{aligned}$\\
$\begin{aligned}  d \mapsto \displaystyle\sum_{i\in\mathbb Z}ix_i\partial x_i-\displaystyle\sum_{j >0}jy_j\partial y_j, \;  c \mapsto K.
\end{aligned}$\\

\begin{prop} \label{FFFRhom}
The above map  gives rise to a homomorphism $\Phi: U (\widehat{\mathfrak{sl}}_2) \mapsto {\mathcal A} ({\bf x}, {\bf y})$ of associative algebras.
\end{prop}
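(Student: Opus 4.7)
The plan is to verify that $\Phi$ respects the defining commutation relations of $\widehat{\mathfrak{sl}}_2$. Since $U(\widehat{\mathfrak{sl}}_2)$ is presented by the generators $e_n,f_n,h_n$ ($n\in\Z$), $c$ and $d$ subject to the affine Kac--Moody relations listed in Section 2, it suffices to check the corresponding identities between the images inside $\mathcal{A}({\bf x},{\bf y})$. Each identity is a statement about formal differential operators, and since only finitely many summands of each image act nontrivially on any given monomial in $\C[{\bf x},{\bf y}]$, every bracket computation reduces to a finite calculation.

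First I would dispose of the easy cases. $\Phi(c)=K$ is a scalar, hence central. The relation $[f_m,f_n]=0$ holds because the $x_n$ are commuting variables. The bracket $[\Phi(d),\Phi(X_n)]=n\,\Phi(X_n)$ for $X\in\{e,f,h\}$ reduces to the fact that $\Phi(d)$ is the Euler operator assigning weight $n$ to $x_n$, weight $n$ to $y_n$ (for $n>0$), and weight $-n$ to $\partial x_n$ and $\partial y_n$; a term-by-term check shows that each monomial appearing in $\Phi(e_n)$ and $\Phi(h_n)$ carries total weight $n$. Finally, $[\Phi(h_m),\Phi(f_n)]=-2\,\Phi(f_{m+n})$ follows at once from $[\sum_k x_{k+m}\partial x_k,\,x_n]=x_{m+n}$.

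The mixed relations $[\Phi(h_m),\Phi(h_n)]$, $[\Phi(h_m),\Phi(e_n)]$, and $[\Phi(e_m),\Phi(f_n)]$ require more careful bookkeeping but no surprises. For the first, the $x\partial x$ parts commute up to a central scalar obtained by telescoping the infinite sum via $[\partial x_k,x_{\ell+m}]=\delta_{k,\ell+m}$, while the $y_{-n}$ and $2nK\partial y_n$ pieces pair off to produce the expected central term. For the other two I would split $\Phi(e_m)$ into its four summands (the triple sum, the $y_k\partial x$ sum, the $K\partial y\,\partial x$ sum, and the scalar multiple $(Km+J)\partial x_{-m}$), compute the bracket of each summand against $\Phi(h_n)$, respectively $x_n$, and then reindex to recognize the outcome as the corresponding summand of $\Phi(e_{m+n})$, respectively $\Phi(h_{m+n})$ plus central correction. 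The term $(Km+J)\partial x_{-m}$ is designed precisely so that the $\delta_{m+n,0}J$ and the central scalar in $[\Phi(e_m),\Phi(f_n)]$ come out correctly.

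The main obstacle is the relation $[\Phi(e_m),\Phi(e_n)]=0$. Both operators are second-order with variable coefficients, so naive expansion yields many cross-terms indexed by several summation variables. My strategy is to write $\Phi(e_n)=A_n+B_n+C_n+D_n$ for its four summands and to compute each of the sixteen brackets $[X_m,Y_n]$ separately. Each bracket is a formal differential operator of bounded order whose coefficients are sums over shifted indices; after grouping $[X_m,Y_n]+[Y_m,X_n]$ for each unordered pair $\{X,Y\}$ and performing the appropriate index shifts, the total becomes manifestly symmetric under $m\leftrightarrow n$. Combined with the antisymmetry of the commutator this forces the sum to vanish. The delicate step is the bookkeeping of indices --- especially in the pairs $(A,B)$ and $(A,C)$, where cancellations emerge only after reindexing summation variables by $\pm m$ or $\pm n$, and one must verify that the residual boundary terms (which a priori might contribute an anomaly) in fact cancel between the $y$-sum, the $\partial y$-sum, and the derivative term $D_n=(Kn+J)\partial x_{-n}$.
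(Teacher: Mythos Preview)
Your proposal is correct in outline but takes a different route from the paper. The paper's proof is essentially a citation: the fact that the restriction of $\Phi$ to $U(\mathfrak{sl}_2\otimes\C[t^{\pm1}]\oplus\C c)$ is a homomorphism is attributed to the literature (\cite{FB}, \cite{JK}), and the authors only verify explicitly the three relations $[\Phi(d),\Phi(g_n)]=n\Phi(g_n)$ for $g\in\{e,f,h\}$, which is precisely your ``Euler operator'' observation. So the bulk of the work you propose --- the mixed $[h,h]$, $[h,e]$, $[e,f]$ relations and especially the sixteen-term expansion for $[\Phi(e_m),\Phi(e_n)]=0$ --- is offloaded by the paper to existing references.

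What each approach buys: the paper's is short and appropriate given that the realization on the loop algebra is standard; yours is self-contained, which has pedagogical value and would be necessary if one wanted to modify the realization. One small correction to your sketch: in the computation of $[\Phi(h_m),\Phi(h_n)]$ the $x\partial x$ parts commute \emph{exactly} (the two sums $\sum_\ell x_{\ell+m+n}\partial x_\ell$ cancel on the nose after reindexing), not ``up to a central scalar''; the entire central term $2mK\delta_{m,-n}$ comes solely from pairing $2mK\partial y_m$ against $y_{-n}$. This does not affect the validity of your plan, only the bookkeeping.
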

\begin{proof}
The fact that $U (\mathfrak{sl}_2 \otimes {\mathbb C}[t^{\pm1}] ) \mapsto {\mathcal A} ({\bf x}, {\bf y}) $ is a homomorphism is well-known (see for example \cite{FB}, \cite{JK}). It remains to check $\Phi([d,g_n])=[\Phi(d),\Phi(g_n)]$, for $g_n=e_n,f_n,h_n$, which is relatively easy. \end{proof}

We call the homomorphism $\Phi$ the {\it  first free field realization of $\widehat{\mathfrak{sl}}_2$} (FFFR for short). With the aid of $\Phi$ we endow any ${\mathcal A} ({\bf x}, {\bf y})$-module $M$ with an $\widehat{\mathfrak{sl}}_2$-module structure. In the particular case when $M$ is the defining ${\mathcal A} ({\bf x}, {\bf y})$-module $\C [{\bf x},{\bf y}]$ we obtain a realization of imaginary Verma modules. This was done in the case $K = 0$ in \cite{JK} and later for arbitrary $K$ in \cite{BF}.  
 This construction has been generalized for the affine Lie algebras $\widehat{\mathfrak{sl}}_n$ in \cite{C2}. For reader's convenience we state the result for $\widehat{\mathfrak{sl}}_2$.
 
\begin{theorem}[\cite{BF}, \cite{JK}] \label{th-im-ver} The $\widehat{\mathfrak{sl}}_2$-module $\C [{\bf x},{\bf y}]$ is isomorphic to the imaginary Verma module $M(\lambda)$, where $\lambda \in {\mathfrak h}^*$ is defined by $\lambda (h_0) = J$, $\lambda (c) = K$, $\lambda (d) = 0$. 
\end{theorem}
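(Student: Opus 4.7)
The plan is to invoke the universal property of the imaginary Verma module $M(\lambda)$. I need to produce a highest weight vector of weight $\lambda$ (with respect to the natural Borel $\mathfrak{g}_+ \oplus \mathfrak{h}$) inside $\mathbb{C}[\mathbf{x},\mathbf{y}]$, then upgrade the resulting comparison map to an isomorphism by exhibiting a PBW basis on the source whose image is the monomial basis of the target.

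First I would check that the constant polynomial $1 \in \mathbb{C}[\mathbf{x},\mathbf{y}]$ is the required highest weight vector. Inspecting the formulas from \S3.2, every summand of $\Phi(e_n)$ contains at least one $\partial x_i$ or $\partial y_j$, so $\Phi(e_n)\cdot 1 = 0$ for all $n \in \mathbb{Z}$; similarly $\Phi(h_m)\cdot 1 = 0$ for every $m>0$ since both surviving summands are strict differential operators. For the Cartan, $\Phi(h_0)\cdot 1 = J$, $\Phi(c)\cdot 1 = K$ and $\Phi(d)\cdot 1 = 0$, which matches the prescribed weight $\lambda$. By Frobenius reciprocity this produces a unique $\mathfrak{g}$-module homomorphism $\pi: M(\lambda) \to \mathbb{C}[\mathbf{x},\mathbf{y}]$ sending the canonical generator $v_\lambda$ to $1$.

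To see that $\pi$ is an isomorphism, I would use PBW on $\mathfrak{g}_- = \bigoplus_{n \in \mathbb{Z}} \mathbb{C} f_n \oplus \bigoplus_{m>0} \mathbb{C} h_{-m}$ with the ordering that places all $f_n$'s before all $h_{-m}$'s. The basis vectors
\[
f_{n_1}\cdots f_{n_p}\, h_{-m_1}\cdots h_{-m_q}\, v_\lambda, \qquad n_1 \le \cdots \le n_p,\ 0<m_1 \le \cdots \le m_q,
\]
are sent by $\pi$ to $x_{n_1}\cdots x_{n_p} y_{m_1}\cdots y_{m_q}$. Indeed, the formula $\Phi(h_{-m}) = -2\sum_k x_{k-m}\partial x_k + y_m$ shows that applying $h_{-m_q},\dots,h_{-m_1}$ to $1$ produces $y_{m_1}\cdots y_{m_q}$, because the derivative summand annihilates any polynomial in the $y_j$'s alone; then, since $\Phi(f_n)$ is multiplication by $x_n$, the remaining applications just multiply by the monomial $x_{n_1}\cdots x_{n_p}$. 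The resulting correspondence between the chosen PBW basis of $M(\lambda)$ and the monomial basis of $\mathbb{C}[\mathbf{x},\mathbf{y}]$ is evidently bijective (the ordered multi-indices can be read off the monomial), so $\pi$ is a linear isomorphism, hence an isomorphism of $\mathfrak{g}$-modules.

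The only delicate point is the claim that no lower-order corrections arise when the $h_{-m_i}$'s act in sequence; this is not automatic in general, but works here precisely because one can arrange the PBW order so that the $h_{-m}$'s act first on the vector $1$, at which stage no $x$-variables are present for the derivative part of $\Phi(h_{-m})$ to see. Once that observation is in place the argument reduces to matching two explicit bases indexed by the same combinatorial data.
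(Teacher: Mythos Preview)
The paper does not supply its own proof of this theorem; it is quoted as a known result from \cite{BF} and \cite{JK} (see also Remark~\ref{rem-d}(ii) for the precise relationship to those references). Your argument is the standard one and is correct: you verify that $1 \in \mathbb{C}[\mathbf{x},\mathbf{y}]$ is a highest weight vector of weight $\lambda$ for the natural Borel, invoke the universal property of the induced module, and then identify the PBW basis of $M(\lambda)$---with the ordering chosen so that the $h_{-m}$'s act on $v_\lambda$ first---with the monomial basis of $\mathbb{C}[\mathbf{x},\mathbf{y}]$. The key observation you isolate, that with this ordering the derivative part of $\Phi(h_{-m})$ never sees any $x$-variables, is exactly what makes the matching exact rather than merely upper-triangular.
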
 
 
\begin{remark}  \label{rem-d}
\begin{enumerate}
\item[(i)] Note that one  can easily obtain a free field realization of $M(\lambda)$ in the case when $\lambda(d) = D$ is nonzero by shifting the map for $d$: $d \mapsto  D+ \displaystyle\sum_{i\in\mathbb Z}ix_i\partial x_i-\displaystyle\sum_{j >0}jy_j\partial y_j$.  
\item[(ii)] The free field realization in \cite{JK} is isomorphic to the quotient of $M(\lambda)$, $\lambda(c) =0$, by the submodule generated by $h_n \otimes 1$, $n<0$. Also, the realization in \cite{BF} is equivalent to our FFFR after applying two anti-involutions. See \cite[\S 3.3]{CF} for details.
\end{enumerate}
\end{remark} 
By Proposition \ref{prop-imag}(iii) the module $\C [{\bf x},{\bf y}]$ is irreducible if and only if $K\neq 0$. If $K=0$, by Proposition \ref{prop-imag}(iv), the quotient of  $\C [{\bf x},{\bf y}]$ by the module generated by $y_m, m>0$, is  irreducible if and only if $J\neq 0$. This quotient is isomorphic to
 $\C[\bf{x}]$ and has $ \widehat{\mathfrak{sl}}_2$-module structure  through the homomorphism $\Phi' : U ( \widehat{\mathfrak{sl}}_2) \to  {\mathcal A} ({\bf x})$ defined by:\\
\begin{eqnarray*} 
 f_n & \mapsto&   x_n,\\
h_n & \mapsto & -2\sum_{m\in \Z}x_{m+n}\partial x_{m}+\delta_{n,0}J,\\
e_n & \mapsto &  -\sum_{m,k\in \Z}x_{k+m+n}\partial x_k\partial x_{m}+J\partial x_{-n},\\
d & \mapsto &  \displaystyle\sum_{i\in\mathbb Z}ix_i\partial x_i, \;  c \mapsto 0.
\end{eqnarray*}
The analog of Theorem \ref{th-im-ver} for the defining module $\C [{\bf x}]$ of ${\mathcal A} ({\bf x})$ is the following. 

\begin{theorem} \label{th-im-ver-quot} The  ${\mathcal A} ({\bf x})$-module $\C [{\bf x}]$ considered as an $\widehat{\mathfrak{sl}}_2$-module through $\Phi'$ is isomorphic to $M'(\lambda)$, where $\lambda \in {\mathfrak h}^*$ is defined by $\lambda (h_0) = J$, $\lambda (c) = 0$, $\lambda (d) = 0$. 
\end{theorem}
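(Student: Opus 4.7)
The plan is to deduce Theorem~\ref{th-im-ver-quot} from Theorem~\ref{th-im-ver} by passing to a quotient. Setting the central charge to $K=0$, Theorem~\ref{th-im-ver} identifies $\C[{\bf x},{\bf y}]$ with $M(\lambda)$ for the weight $\lambda$ specified in the statement. Interpreting $M'(\lambda)$ as the quotient $M(\lambda)/\mathfrak{M}$ from Proposition~\ref{prop-imag}(iv), the task reduces to showing that $\mathfrak{M}$ corresponds under this identification to the polynomial ideal $\mathcal{I}:=(y_m\mid m>0)\subset \C[{\bf x},{\bf y}]$, whence $M'(\lambda)\cong \C[{\bf x},{\bf y}]/\mathcal{I}\cong \C[{\bf x}]$, and then checking that the induced $\widehat{\mathfrak{sl}}_2$-action agrees with the one defined by $\Phi'$.

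For the identification $\mathfrak{M}=\mathcal{I}$, the inclusion $\mathfrak{M}\subseteq\mathcal{I}$ is immediate from weight bookkeeping: the formula for $\Phi(h_0)$ forces any monomial of $d$-weight $-k<0$ and $h_0$-weight $J$ to be free of $x$-factors, so $\sum_{k>0}M(\lambda)_{\lambda-k\delta}$ is spanned by nonconstant monomials in the $y_j$'s, all of which lie in $\mathcal{I}$. The reverse inclusion follows from two observations: the action $\Phi(f_n)=x_n$ introduces arbitrary $x$-factors, and for $n<0$ one has $\Phi(h_n)\cdot 1 = y_{-n}$, which lets one build arbitrary products of $y$'s starting from a single $y_j\in\mathfrak{M}$. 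Hence every monomial with at least one $y$-factor lies in $U(\mathfrak{g})\cdot\{y_j\mid j>0\}\subseteq\mathfrak{M}$.

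The last step is to verify that the induced action on the quotient $\C[{\bf x}]$ coincides with the one from $\Phi'$, which is a term-by-term comparison: with $K=0$ and after reduction modulo $\mathcal{I}$, the summands $\delta_{n<0}\,y_{-n}$ and $\delta_{n>0}\,2nK\partial y_n$ in $\Phi(h_n)$, as well as $\sum_{k>0}y_k\partial x_{-k-n}$ and $2K\sum_{m>0}m\,\partial y_m\partial x_{m-n}$ in $\Phi(e_n)$, either vanish outright or map into $\mathcal{I}$; the remaining terms are precisely those of $\Phi'(h_n)$ and $\Phi'(e_n)$, while the formulas for $f_n$, $c$, and $d$ descend trivially. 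The only nontrivial step is the identification $\mathfrak{M}=\mathcal{I}$; once this is in hand the remainder of the argument is formal, so this is where I would expect the main (if modest) obstacle to lie.
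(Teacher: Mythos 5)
Your argument is correct and follows essentially the same route the paper takes: the paper justifies Theorem~\ref{th-im-ver-quot} by the remarks preceding it, namely that for $K=0$ the quotient of $\C[{\bf x},{\bf y}]\cong M(\lambda)$ by the submodule generated by the $y_m$ (which is exactly $\mathfrak M$, since $\sum_{k>0}M(\lambda)_{\lambda-k\delta}$ is spanned by the nonconstant $y$-monomials) is $\C[{\bf x}]$ with the action $\Phi'$. Your write-up just supplies the weight bookkeeping and the term-by-term check of stability of $(y_m\mid m>0)$ that the paper leaves implicit.
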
 
The $\widehat{\mathfrak{sl}}_2$-module  $\C[\bf{x}]$ is irreducible if and only if $J \neq 0$. If $J=0$ then the module $\C[\bf{x}]$ has unique irreducible quotient isomorphic to $\C$. 

\subsection{Localization of weight $\widehat{\mathfrak{sl}}_2$-modules} \label{loc}
In this subsection we recall the definition of the localization functor on weight modules. For details we refer the reader to \cite{D} and \cite{M}.

We set $U:=U(\widehat{\mathfrak{sl}}_2)$.
For every $g= e_i$ or $g = f_i$, $ i \in \Z$, the multiplicative set
${\bf S}_g:=\{ g^n \; | \; n \in \Z_{\geq 0} \} \subset U$
satisfies Ore's localizability conditions
because $\ad g$ acts locally finitely on $U$. Let
$\cD_g U$ be the localization of $U$ relative to ${\bf S}_{g}$.
For every weight module $M$ we denote by $\cD_g M$ the {\it
$g$--localization of $M$}, defined as $\cD_g M =
\cD_g U \otimes_U M$. If $g$ acts  injectively on $M$, then $M$ can be naturally viewed as 
a submodule of $\cD_g M$, and $\cD_g^2 M = \cD_g M$. Furthermore, if
$g$ is injective on $M$, then it is bijective on $M$ if and
only if $\cD_g M = M$.

\subsection{Generalized conjugations.} \label{gencon}
 For $z \in \C$ and $u \in \cD_g U$  we set
\begin{equation} \label{theta}
\Theta_z(u):= \sum_{j \geq 0} \binom{z}{j}\,
( \ad g)^j (u) \, g^{-j},
\end{equation}
where $\binom{z}{j}= \frac{z( z-1)\cdots (z-j+1)}{j!}$. Since $\ad g$ is locally nilpotent on 
$\cD_g U$, the sum above is
actually finite. Note that for $z \in \Z$ we have $\Theta_z(u) =
g^z u g^{-z}$.  For a $\cD_g U$-module $M$ by
$\Psi_g^z M$ we denote the $\cD_g U$-module $M$ twisted by
the action
$$
u \cdot v^z := ( \Theta_z (u)\cdot v)^z,
$$
where $u \in \cD_g U$, $v \in M$, and $v^z$ stands for the
element $v$ considered as an element of $\Psi_g^z M$. In
particular, if $g = e_i$ (respectively, $g = f_i$), then $v^z \in M^{\lambda + z(\beta+i\delta)}$ (respectively, $v^z \in M^{\lambda + z(- \beta+i\delta)}$ ) whenever $v \in
M^\lambda$. 

In the case ${z} \in {\mathbb Z}$, there is a natural isomorphism of ${\cD}_{g} {\mathcal U}$-modules $M \to \Psi_{F}^{\bf z} M$ given by $m \mapsto ({g}^{z} \cdot m)^{z}$ with inverse map defined by $n^{z} \mapsto {g}^{-z} \cdot n$.
In view of this isomorphism, for ${z} \in {\mathbb Z}$, we will identify $M$ with  $\Psi_{g}^{z}M$, and for any ${z} \in {\mathbb C}$ will write ${g}^{z} \cdot m$ (or simply ${g}^{z} m$) for $m^{-z}$ whenever $m \in M$.

The following lemma is straightforward.

\begin{lem} \label{lem-conj}
Let $M$ be a $\cD_{g} U$-module, $v \in M$, $u \in \cD_g U$ and $z,w \in \C$. Then
\begin{itemize}
\item[(i)] $\Psi_g^z (
\Psi_g^w M) \simeq \Psi_g^{z+w}M $, in particular $g^z \cdot (g^w \cdot v) =
g^{z+w} \cdot v$.

\item[(ii)] $g^z \cdot (u \cdot (g^{-z}
\cdot v)) = \Theta_z(u) \cdot v$.
\end{itemize}
\end{lem}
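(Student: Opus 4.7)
Both statements rest on a single identity in $\cD_g U$, namely the cocycle law
$$\Theta_{z}\circ \Theta_{w} = \Theta_{z+w}, \qquad z,w\in\C.$$
I would prove this first. Writing out both compositions using \eqref{theta}, the key observation is that $\ad g$ annihilates $g^{-k}$, so by the Leibniz rule $(\ad g)^{k}(X g^{-j}) = (\ad g)^{k}(X)\,g^{-j}$ for every $X\in\cD_{g}U$. After using this to collapse the resulting double sum and collecting terms with $n = j+k$, the identity reduces to the Vandermonde convolution $\sum_{j+k=n}\binom{z}{j}\binom{w}{k} = \binom{z+w}{n}$.

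For part (i), with the cocycle law in hand, I would define a candidate isomorphism
$\Phi\colon \Psi_{g}^{z+w}M \to \Psi_{g}^{z}(\Psi_{g}^{w}M)$ by $v^{z+w} \mapsto (v^{w})^{z}$, with the evident inverse. To verify $\cD_{g}U$-linearity, I would compute
\begin{align*}
u \cdot v^{z+w} &= (\Theta_{z+w}(u)\cdot v)^{z+w},\\
u \cdot (v^{w})^{z} &= \bigl((\Theta_{w}(\Theta_{z}(u))\cdot v)^{w}\bigr)^{z},
\end{align*}
and then invoke the cocycle identity. Translating through the convention $g^{a}\cdot m = m^{-a}$ for $m\in M$ then produces the ``in particular'' clause $g^{z}\cdot(g^{w}\cdot v) = g^{z+w}\cdot v$.

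For part (ii), I would simply unwind the conventions of \S\ref{gencon}: by definition $g^{-z}\cdot v = v^{z}\in\Psi_{g}^{z}M$; the twisted action gives $u\cdot v^{z} = (\Theta_{z}(u)\cdot v)^{z}$; and then, writing $w := \Theta_{z}(u)\cdot v\in M$, the identity $g^{z}\cdot w^{z} = w$ (which follows from part (i) via $g^{z}\cdot(g^{-z}\cdot w) = g^{0}\cdot w = w$) yields
$$g^{z}\cdot\bigl(u\cdot(g^{-z}\cdot v)\bigr) \;=\; g^{z}\cdot w^{z} \;=\; w \;=\; \Theta_{z}(u)\cdot v,$$
as required.

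The only mildly delicate point is the Vandermonde manipulation behind the cocycle identity; once that is established, the rest reduces to careful bookkeeping with the labels on twisted modules, so I do not anticipate a genuine obstacle.
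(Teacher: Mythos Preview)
The paper omits the proof entirely, declaring the lemma ``straightforward.'' Your argument via the cocycle identity $\Theta_{z}\circ\Theta_{w}=\Theta_{z+w}$, reduced to the Vandermonde convolution after using $[\,g,g^{-k}\,]=0$ and the Leibniz rule, is correct and is the natural way to supply the details; the bookkeeping for (i) and (ii) through the convention $g^{a}\cdot m=m^{-a}$ is also handled accurately.
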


\medskip

In what follows we set $\cD_g^z M:=\Psi_g^z (\cD_g M)$ and refer to it as a 
{\it twisted localization of $M$}. Note that the localization and the twisted localization functors are exact.

\begin{remark}
If $\alpha = \beta+ i \delta$, and $g = f_i$, then the functors $\cD_g$ and $\cD_g^z$ are often denoted by  $\cD_{\alpha}$ and $\cD_{\alpha}^z$, respectively. The $\alpha$-notation for the localization functors is convenient when higher rank (affine) Lie algebras are considered, see \cite{DG}, \cite{Gr}.
\end{remark}

The following is straightforward.
\begin{lemma} \label{sup-lemma} Let $M$ be a $U$-module, $z \in {\mathbb C}$, and $\alpha = - \beta + i \delta$. Then ${\rm Supp}\, \cD_{f_i}^z M = z\alpha + {\rm Supp}\, M + {\mathbb Z} \alpha$.
\end{lemma}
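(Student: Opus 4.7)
The plan is to unpack the identity $\cD_{f_i}^z M = \Psi_{f_i}^z(\cD_{f_i} M)$ recorded at the end of Section \ref{gencon} and compute the support in two stages, then combine.

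The first stage determines ${\rm Supp}\, \cD_{f_i} M$ and reduces to the claim ${\rm Supp}\, \cD_{f_i} M = {\rm Supp}\, M + \Z\alpha$. One inclusion is immediate: every element of $\cD_{f_i} M = \cD_{f_i} U \otimes_U M$ is a finite sum of terms $f_i^{-n} \otimes m$ with $n \geq 0$ and $m \in M$ a weight vector of some weight $\mu$, and since $f_i$ has weight $\alpha = -\beta + i\delta$ such a term lies in weight $\mu - n\alpha$. For the reverse inclusion I would use two observations: the bijective action of $f_i$ on $\cD_{f_i} M$ (built into the Ore localization) forces ${\rm Supp}\, \cD_{f_i} M$ to be stable under translation by $\Z\alpha$, and the canonical map $M \to \cD_{f_i} M$, $m \mapsto 1 \otimes m$, preserves weights, so every weight of $M$ already appears in $\cD_{f_i} M$.

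The second stage incorporates the twist. The definition of $\Psi_{f_i}^z$ given in \eqnref{theta} assigns to a weight vector $v \in (\cD_{f_i} M)_\lambda$ a twisted vector $v^z$ of weight $\lambda + z\alpha$, as recorded explicitly at the end of Section \ref{gencon} for the case $g = f_i$. Hence ${\rm Supp}\, \Psi_{f_i}^z(\cD_{f_i} M) = z\alpha + {\rm Supp}\, \cD_{f_i} M$, and substituting the outcome of the first stage yields ${\rm Supp}\, \cD_{f_i}^z M = z\alpha + {\rm Supp}\, M + \Z\alpha$.

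The only delicate point I anticipate is the inclusion ${\rm Supp}\, M \subseteq {\rm Supp}\, \cD_{f_i} M$, which in principle could fail if the localization were to annihilate an entire weight space (precisely when $f_i$ acts locally nilpotently on that weight space). For the modules relevant to this paper $f_i$ acts injectively --- for instance as multiplication by $x_i$ on the polynomial realization of Theorem \ref{th-im-ver} --- so this subtlety does not arise and both stages are unobstructed.
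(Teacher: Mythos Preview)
Your proposal is correct and follows essentially the same two-stage approach as the paper: first establish ${\rm Supp}\,\cD_{f_i} M = {\rm Supp}\, M + \Z\alpha$, then shift by $z\alpha$ via the twist $\Psi_{f_i}^z$. Your treatment is in fact more explicit than the paper's one-line proof, and you correctly flag the injectivity subtlety that the paper passes over silently.
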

\begin{proof}
Let $N = \cD_{f_i} M$. Then  ${\rm Supp} \, N =  {\rm Supp}\, M + {\mathbb Z} \alpha$. The lemma follows  from the fact that  for $v^z \in N^{\lambda + z\alpha}$ if and only if  $v \in N^\lambda$. 
\end{proof}

\section{Irreducibility of localized modules} \label{sec-loc}

From now on we fix $i \in {\mathbb Z}$. 

Unless otherwise specified, every module $M$ of ${\mathcal A} ({\bf x}, {\bf y})$ is considered as a module of $\widehat{\mathfrak{sl}}_2$ via $\Phi$. For simplicity,  we will often write $M$ both for the ${\mathcal A} ({\bf x}, {\bf y})$-module and the corresponding $\widehat{\mathfrak{sl}}_2$-module.  Fix  a  finite subset $S=\{ k_1,\dots,k_n\} $ of $\mathbb N$ ($S = \emptyset$ is allowed). We first define four spaces of Laurent polynomials:
\begin{eqnarray*}
N_S & := & \mathbb C[{\bf x},{\bf y}_S^{{\pm1}},\widehat{\bf y}_S],\\
N_{i,S} & :=& \mathbb C[x_i^{\pm1},{\bf y}_S^{{\pm1}},\widehat{\bf x_i},\widehat{\bf y}_S], \\
I_S & := & \mathbb C[{\bf x},{\bf y}_{S\setminus\{k_1\}}^{\pm1},\widehat{\bf y}_{S\setminus\{k_1\}}]+\dots+\mathbb C[{\bf x},{\bf y}_{S\setminus\{k_n\}}^{\pm1},\widehat{\bf y}_{S\setminus\{k_n\}}], \\
 I_{i,S}& :=& \mathbb C[{\bf x},{\bf y}_{S}^{\pm1},\widehat{\bf y}_{S}]+\mathbb C[x_i^{ \pm 1},{\bf y}_{S\setminus\{k_1\}}^{\pm1},\widehat{{\bf x_i}},\widehat{\bf y}_{S\setminus\{k_1\}}]+\dots+\mathbb C[x_i^{\pm 1},{\bf y}_{S\setminus\{k_n\}}^{\pm1},\widehat{{\bf x_i}},\widehat{\bf y}_{S\setminus\{k_n\}}].
\end{eqnarray*}
We furthermore set 
$$
M_{S}:=N_S/I_S, \; M_{i,S}:=N_{i,S}/I_{i,S}.
$$

Using Proposition \ref{FFFRhom}  we can easily verify the following.

\begin{proposition} The homomorphism $\Phi$   defines a representation of  $\widehat{\mathfrak{sl}}_2$ on the spaces $N_{S}$, $N_{i,S}$, $M_S$, and $M_{i,S}$. We also have the following  vector space isomorphisms
\begin{equation*}
M_{S}\simeq 
y_{k_1}^{-1}\ldots y_{k_n}^{-1}\mathbb C[{\bf x},{\bf y}_{S}^{-1},\widehat{\bf y}_{S}], \; 
M_{i,S}\simeq x_i^{-1}y_{k_1}^{-1}\ldots y_{k_n}^{-1}\mathbb C[x_i^{-1},{\bf y}_{S}^{-1},\widehat{{\bf x_i}},\widehat{\bf y}_{S}].
\end{equation*}
\end{proposition}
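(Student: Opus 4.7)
The plan breaks into three parts: verifying that $\Phi$ acts by well-defined operators on $N_S$ and $N_{i,S}$; verifying that $I_S$ and $I_{i,S}$ are submodules of the respective spaces; and establishing the claimed vector-space isomorphisms.

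For the first part, each summand of $\Phi(e_n)$, $\Phi(h_n)$, $\Phi(f_n)$, $\Phi(d)$ is a finite product of elementary operators of the form $x_m\cdot,\ y_k\cdot,\ \partial x_m,\ \partial y_k$. These extend uniquely from $\C [{\bf x},{\bf y}]$ to the localizations $N_S$ and $N_{i,S}$ via the derivation identity $\partial(f^{-1}) = -f^{-2}\partial f$. The potentially infinite sums over $m \in \Z$ in $\Phi(h_n)$ and $\Phi(e_n)$ reduce to finite sums on every monomial, since any monomial involves only finitely many variables. The commutation relations of $U(\widehat{\mathfrak{sl}}_2)$ hold as identities of differential operators on $\C [{\bf x},{\bf y}]$ by Proposition \ref{FFFRhom}, and the same polynomial identities survive the passage to any localization, giving the module structures on $N_S$ and $N_{i,S}$.

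For the second part, it suffices to show that each summand of $I_S$ and $I_{i,S}$ is preserved by $\Phi$. The summand $\C [{\bf x},{\bf y}_{S\setminus\{k_j\}}^{\pm1},\widehat{\bf y}_{S\setminus\{k_j\}}]$ of $I_S$ is the span of monomials whose $y_{k_j}$-exponent is $\geq 0$. A term-by-term inspection of $\Phi$ shows that $y_{k_j}$ enters either as a multiplier (raising its exponent) or inside a single derivation $\partial y_{k_j}$ which sends $y_{k_j}^a$ to $a y_{k_j}^{a-1}$, thus preserving the condition $a \geq 0$ since the result vanishes when $a = 0$. All other factors leave $y_{k_j}$ untouched, so the summand is stable. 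The same analysis applied simultaneously to the $x_i$-exponent and the $y_{k_j}$-exponent handles the summands of $I_{i,S}$. Hence $I_S$ and $I_{i,S}$ are $\widehat{\mathfrak{sl}}_2$-submodules, and $M_S$ and $M_{i,S}$ inherit the action.

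For the third part, $N_S$ has the $\C$-basis of monomials $\prod_{\ell \in \Z} x_\ell^{a_\ell} \prod_{k\in S} y_k^{b_k} \prod_{m \notin S} y_m^{c_m}$ with $a_\ell, c_m \geq 0$ and $b_k \in \Z$, and $I_S$ is spanned by those monomials for which $b_{k_j} \geq 0$ for at least one $j$. Therefore $M_S$ has as a basis the cosets of monomials with $b_{k_j} \leq -1$ for every $j$; pulling out the factor $y_{k_1}^{-1} \cdots y_{k_n}^{-1}$ yields the stated identification. The argument for $M_{i,S}$ is identical, with the additional constraint $a_i \leq -1$ producing the additional factor $x_i^{-1}$. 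The main obstacle is the second part: verifying that each summand of $I_S$ and $I_{i,S}$ is stable under every term of every $\Phi(g_n)$. This reduces to the observation that in the explicit formulas for $\Phi$, each variable appears either multiplicatively or as a single first-order partial derivative, so no term can lower an exponent past zero.
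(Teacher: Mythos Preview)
Your proof is correct, and in fact the paper gives no proof at all: it simply asserts that the proposition ``can easily be verified'' using Proposition~\ref{FFFRhom}. Your three-part argument supplies exactly the verification the paper omits.

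One small imprecision worth flagging, since you single it out as the ``main obstacle'': your closing claim that in the formulas for $\Phi$ each variable appears only multiplicatively or as a single first-order partial derivative is not literally true for the $x$-variables. The term $-\sum_{m,k}x_{k+m+n}\partial x_k\partial x_m$ in $\Phi(e_n)$ contributes a genuine second-order operator $x_{2i+n}\partial x_i^2$ when $k=m=i$. This does not damage your conclusion: $\partial x_i^2$ applied to $x_i^a$ with $a\ge 0$ either vanishes (for $a\le 1$) or keeps the exponent nonnegative. Alternatively, and more cleanly, observe that each summand of $I_S$ is of the form $N_{S\setminus\{k_j\}}$ and each summand of $I_{i,S}$ is either $N_S$ or $N_{i,S\setminus\{k_j\}}$; these are all $\Phi$-modules by your first step, so stability is automatic without any term-by-term analysis.
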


With the aid of the  isomorphisms in the above proposition we will present every element in $M_{S}$ (respectively, $M_{i,S}$) uniquely as a polynomial with negative powers of $y_{k_1},...,y_{k_n}$ (respectively, of $y_{k_1},...,y_{k_n}$ and $x_i$).

Recall that if $K=0$ and $J \neq 0$ then the module $\C[{\bf x},{\bf y}]$ is reducible and has a unique irreducible quotient isomorphic to $\C[\bf{x}]$.  In fact, the quotient map $\C[{\bf x},{\bf y}] \to \mathbb C[\bf{x}]$ is nothing else but the evaluation homomorphism $ f({\bf x}, {\bf y})\mapsto f({\bf x}, {\bf 0})$.

In the case $K=0$, we define $N_i =  \mathbb C[x_i^{\pm 1}, \widehat{{\bf x_i}}]$, $I_i = \mathbb C[{\bf x}]$, and $M_i = N_i/I_i$. 
\begin{proposition} The homomorphism $\Phi'$   defines a representation of  $\widehat{\mathfrak{sl}}_2$ on the spaces $N_{i}$ and $M_{i}$. We also have the following  vector space isomorphism
$$M_i \simeq x_i^{-1}\C[x_i^{-1},\hat{\bf{x}}_i].$$
\end{proposition}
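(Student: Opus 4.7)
The plan has two parts, corresponding to the two claims. First I would verify that the formulas of $\Phi'$ extend to well-defined differential operators on the larger ring $N_i = \C[x_i^{\pm 1}, \widehat{\bf x}_i]$ satisfying the defining relations of $\widehat{\mathfrak{sl}}_2$, and that the subspace $I_i = \C[{\bf x}]$ is stable under them. Second, I would compute $M_i = N_i/I_i$ as a vector space by a direct decomposition.

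For the first part, every generator of $\widehat{\mathfrak{sl}}_2$ is sent by $\Phi'$ to a formal sum of terms of the form $P({\bf x})\,\partial x_{j_1}\cdots\partial x_{j_\ell}$ with $P\in\C[{\bf x}]$ and $\ell\le 2$. Such an expression extends to $N_i$ in the obvious way, since multiplication by any $x_n$ preserves $N_i$ and the partial derivative $\partial x_i$ sends $x_i^{-k}$ to $-k x_i^{-k-1}\in N_i$. The formal sums indexed by all integers $m$ (or pairs $m,k$) that appear in $h_n$ and $e_n$ are locally finite on $N_i$: on a fixed Laurent monomial only those indices matching one of the finitely many variables occurring in the monomial produce a nonzero contribution. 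The defining relations of $\widehat{\mathfrak{sl}}_2$ hold as identities in the algebra of formal differential operators by (the $K=0$ specialization of) Proposition \ref{FFFRhom}, so they transfer automatically to the action on $N_i$. Stability of $I_i = \C[{\bf x}]$ is precisely the content of Theorem \ref{th-im-ver-quot}: the restriction of $\Phi'$ to $\C[{\bf x}]$ is the original first free field realization of the quotient of the imaginary Verma module. Consequently $M_i = N_i/I_i$ inherits an $\widehat{\mathfrak{sl}}_2$-module structure.

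For the vector space isomorphism one decomposes
$$N_i \;=\; \C[x_i, \widehat{\bf x}_i] \;\oplus\; x_i^{-1}\C[x_i^{-1}, \widehat{\bf x}_i]$$
by separating non-negative from strictly negative powers of $x_i$. Since the first summand is exactly $I_i$, the composition of the inclusion $x_i^{-1}\C[x_i^{-1}, \widehat{\bf x}_i]\hookrightarrow N_i$ with the quotient map to $M_i$ is a vector space isomorphism, which is the asserted description. The only non-routine point in this plan is the local finiteness verification in the first step; this is entirely analogous to what is done for Proposition \ref{FFFRhom}, so presents no real obstacle, and the argument is essentially the same as the one used previously to endow $N_S$, $M_S$, $N_{i,S}$, and $M_{i,S}$ with the $\widehat{\mathfrak{sl}}_2$-action via $\Phi$.
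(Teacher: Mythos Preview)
Your proposal is correct and matches the paper's treatment: the paper states this proposition without proof, regarding it as the immediate $K=0$ analogue of the preceding proposition about $\Phi$, which in turn is justified by a one-line appeal to Proposition~\ref{FFFRhom}. Your two steps---checking local finiteness and stability of $I_i$ via the existing $\Phi'$-action on $\C[{\bf x}]$, and splitting $N_i$ by the sign of the $x_i$-exponent---are exactly the routine verifications the paper leaves implicit.
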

With the aid of the  isomorphisms in the above proposition we will present every element in $M_i$ as a polynomial with negative powers of $x_i$.

Our goal is to understand the  $\widehat{\mathfrak{sl}}_2$-module structure of  $M_{S}$,  $M_{i,S}$  and of $M_i$.  We will denote these modules also by  $\Phi(S)$,   $\Phi(i,S)$, and $\Phi'(i)$, respectively.  Our first main result of the paper is the following.

\begin{theorem}\label{main}  Let $i$ and $S$ be as above. 
\begin{itemize}
\item[(i)] Let $K \neq 0$. Then the representation $\Phi(S)$ is  irreducible, while  $\Phi(i,S)$  is irreducible if and only if $J-iK\notin {\mathbb Z}$.

\item[(ii)] Let $K = 0$. Then the representation  $\Phi'(i)$ is irreducible if and only if $J \notin {\mathbb Z}$.
\end{itemize}

\end{theorem}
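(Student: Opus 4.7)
The strategy is to verify irreducibility (or non-irreducibility) for each of $\Phi(S)$, $\Phi(i,S)$, and $\Phi'(i)$ by explicit action computations on the natural monomial basis of $M_S$, $M_{i,S}$, and $M_i$, respectively. I would identify a distinguished candidate cyclic vector $v_0$ in each module: $v_0 = y_{k_1}^{-1}\cdots y_{k_n}^{-1}$ in $M_S$; $v_0 = x_i^{-1} y_{k_1}^{-1}\cdots y_{k_n}^{-1}$ in $M_{i,S}$; and $v_0 = x_i^{-1}$ in $M_i$. Irreducibility is then equivalent to the two generation statements (a) every nonzero $v$ satisfies $v_0 \in U(\widehat{\mathfrak{sl}}_2)\cdot v$, and (b) every monomial of the module lies in $U(\widehat{\mathfrak{sl}}_2)\cdot v_0$; in the ``only if'' cases a proper submodule is exhibited directly.

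For step (a), I would filter the module lexicographically by the multi-degree in the Laurent-inverted variables, refined by the ordinary ${\bf x}$- and ${\bf y}$-degrees. The key reduction operators are: multiplication by $x_n$ via $f_n$; the term $2nK\,\partial y_n$ sitting inside $\Phi(h_n)$ for $n > 0$ (extracted as the principal part, nonzero precisely because $K \neq 0$); the term $y_n$ inside $\Phi(h_{-n})$; and the term $(Kn + J)\partial x_{-n}$ inside $\Phi(e_n)$. Each of these, after subtracting lower-order tails (produced from previous applications of other generators or via commutators like $[\Phi(e_m), \Phi(f_n)] = \Phi(h_{m+n}) + \text{central term}$), strictly decreases the filtration. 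For $\Phi(S)$ the reduction succeeds unconditionally given $K \neq 0$, yielding irreducibility. For $\Phi(i,S)$ the additional ingredient is navigating the $x_i^{-1}$-exponents: a direct computation gives $\Phi(e_{-i})(x_i^{-k}\cdot w) = c_k\, x_i^{-k-1}\cdot w + (\text{lower})$ for $w$ independent of $x_i$, where $c_k$ is an affine function of $k$ whose zero locus is a coset of $\mathbb{Z}$ in $\mathbb{C}$ determined by $J - iK$. Thus $c_k \neq 0$ for every relevant $k$ precisely when $J - iK \notin \mathbb{Z}$. A parallel analysis with $K = 0$ produces the condition $J \notin \mathbb{Z}$ in part (ii).

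Step (b) is an inverse of step (a): starting from $v_0$, I would apply $f_n$ to introduce $x_n$-multiplications, $h_{-n}$ to introduce $y_n$-multiplications, and iteratively use $\Phi(h_n)$ and $\Phi(e_n)$ together with the inverse operators already extracted to reach every monomial. For the ``only if'' direction, when $J - iK = m \in \mathbb{Z}$ (resp.\ $J \in \mathbb{Z}$), I would choose the minimal $k^* \geq 1$ at which the obstruction coefficient $c_{k^*}$ vanishes and show that the span of monomials of $x_i^{-1}$-degree at least $k^* + 1$ forms a proper submodule: the $x_i^{-1}$-filtration is strictly preserved by every generator except at the critical level, and stability under $\Phi(e_{-i})$ at that level is exactly equivalent to $c_{k^*} = 0$.

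The main technical obstacle is tail control. The operators $\Phi(e_n)$ and $\Phi(h_n)$ carry several mixed-derivative terms involving both ${\bf x}$- and ${\bf y}$-variables, so the filtration must be chosen carefully to ensure that each generator has a single unambiguous leading term with strictly smaller tails. A secondary subtlety is that the pure differentials $\partial x_n$, $\partial y_n$ are not themselves in $\Phi(U(\widehat{\mathfrak{sl}}_2))$ but must be recovered through iterated commutators and subtractions; the bookkeeping in this extraction, together with verifying the precise proper submodule in the reducible cases, is where the bulk of the technical work goes. Once this analysis is organized, the arithmetic condition $J - iK \notin \mathbb{Z}$ (resp.\ $J \notin \mathbb{Z}$) emerges naturally as the condition for the entire chain of obstruction coefficients to remain nonvanishing.
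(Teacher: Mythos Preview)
Your overall architecture matches the paper's: the same cyclic vectors $v_0$, the same reduction to the two generation statements (a) and (b), and degree-based induction to reduce arbitrary $v$ to $v_0$. For $\Phi(S)$ this is essentially complete, and for the cyclicity direction (b) in $\Phi(i,S)$ and $\Phi'(i)$ your sketch is close to what the paper does via Lemma~\ref{prop4.10}.

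There are, however, two genuine gaps. First, a small but real error: the leading coefficient in $e_{-i}(x_i^{-p}Z)$ is not affine in $p$ but quadratic, namely $A_{\alpha,p}=-p(p+1-2\alpha+J-iK)$ where $\alpha=\deg_{x^+}(Z)$ (Lemma~\ref{degi+}). More importantly, for step (a) in $M_{i,S}$ the tails are worse than you suggest: $e_{-n}(x_i^{-1}h)$ contains terms with $x_i^{-2}$ and $x_i^{-3}$, so no monomial filtration makes $e_{-n}$ triangular. The paper handles this with the composite operator $\widetilde e_n=(e_{-i}f_i-A_{\alpha,1})(e_{-i}f_i-A_{\alpha+1,2})e_{-n}$ (Lemma~\ref{e-n-i}), which kills those parasitic terms; this is the missing idea in your ``tail control'' paragraph.

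Second, and more seriously, your ``only if'' argument does not work. The span of monomials with $x_i^{-1}$-degree at least $k^*+1$ is not a submodule: already $f_i$ (which is multiplication by $x_i$) sends $x_i^{-k^*-1}$ to $x_i^{-k^*}$ and walks out of it, and the $x_i^{-p+1}Z''$ term in $e_{-i}(x_i^{-p}Z)$ does the same. The vanishing of your coefficient $c_{k^*}$ governs the term going \emph{deeper} into negative $x_i$-powers, not the one escaping. The paper's reducibility argument is of a different nature: when $J-iK\in\mathbb Z$ one constructs an explicit $e_{-i}$-primitive vector (Proposition~\ref{primitive}, using polynomials in $x_{i+1},x_{i+2},\ldots$ annihilated by a power of $\mathcal E_i=\sum_{s_1,s_2>i}x_{s_1+s_2-i}\partial x_{s_1}\partial x_{s_2}$), and then the set of $e_{-i}$-locally nilpotent vectors is a nonzero proper submodule (Corollary~\ref{m-i-red}). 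You will need an argument of this type, since no simple degree filtration yields a submodule here.
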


Theorem~\ref{main} allows us to explicitly construct  new irreducible representations of  $\widehat{\mathfrak{sl}}_2$ applying the localization functors to imaginary 
Verma modules. These new modules are dense  weight modules with  infinite multiplicities as proved in Section \ref{sec-im-ver}.

\section{Proof of  Theorem \ref{main}}

As before  $U=U(\widehat{\mathfrak{sl}}_2)$. 
By abuse of notation we will write $u(v)$ instead of $\Phi(u)(v)$ for $u\in U$,  $v\in M_S$ or $v\in M_{i,S}$, as well as $u'(v')$  instead of $\Phi'(u')(v')$ for $u'\in U$,  $v'\in M_i$.

 We  first introduce some standard notation.

\begin{defi}\label{deg}
Let $v=x_i^{-p}y_{k_1}^{-\gamma_1}\dots y_{k_n}^{-\gamma_n}y_{j_1}^{\beta_1}\ldots y_{j_l}^{\beta_l}x_{i_1}^{\alpha_1}\ldots x_{i_k}^{\alpha_k}\in M_{i,S}$, where $p, \gamma_1,\ldots,\gamma_n,$ $\beta_1,\ldots,\beta_l,$ $\alpha_1,\ldots,\alpha_k\in\mathbb N$. Define
\begin{eqnarray*}
\deg_x(v)& =& -p+\alpha_1 + \cdots + \alpha_k,\\
\deg_{x^+}(v)& =& \alpha_1 + \cdots + \alpha_k,\\
\deg_y(v)& =& -\gamma_1-\dots-\gamma_n+\beta_1+\dots+\beta_l,\\
\deg_{y^+}(v_q)& =& \beta_1+\dots+\beta_l,
\end{eqnarray*}

The definitions above naturally extend to the case of an arbitrary
 $v$ in $M_{i,S}$. For example,  if $v=v_1+\dots+v_t$ where $v_1,\dots,v_t$ are monomials then we set
\begin{equation*}
\deg_y(v)=\max\{\deg_y(v_j)|j=1,\dots,t\},
\end{equation*}
and call $\deg_y(v)$ the {\em $y$-degree} of $v$. We call $v=v_1+\dots+v_t$ $\deg_y${\em-homogeneous} if the monomials $v_1,...,v_t$ have the same  $y$-degree. We similarly introduce $\deg_x (v)$ and 
$\deg_{x^+} (v)$ for arbitrary $v$ in $M_{i,S}$, as well as the notions {\em $x$-degree, $x^{+}$-degree, $x$-homogeneous}, and {\em $x^{+}$-homogeneous}. Finally, we introduce analogously the same notation and notions for the modules $M_S$ and $M_i$, respectively.
\end{defi}

\subsection{Primitive vectors in the case $J-iK \in {\mathbb Z}$.} In this subsection we focus on the case when $J-iK$ is an integer. The main goal is to prove that the modules $M_{i,S}$ and $M_i$ have $e_{-i}$-primitive vectors. As a corollary we will show that $M_{i,S}$ and $M_i$ are reducible. 

The following notation will be used in this subsection only.
$$
{\mathcal E}_i =\sum_{s_1>i,s_2>i}x_{s_1+s_2-i}\partial x_{s_1}\partial x_{s_2} = \sum_{s_1,s_2>0}x_{s_1+s_2+i}\partial x_{s_1+i}\partial x_{s_2+i}. 
$$

\begin{lemma} \label{lemma-e-n} Let $N>0$. Then there is $\deg_x$-homogeneous element $g$ in ${\mathbb C} [{\bf x}]$ with $\deg (g) = \deg_x (g) = N+1$ such that $\partial x_{j} (g)=0$ for $j\leq i$ and ${\mathcal E}_i^N (g) = 0$.
\end{lemma}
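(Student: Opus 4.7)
The plan is to work entirely in the polynomial subring $\mathbb{C}[x_{i+1}, x_{i+2}, \ldots] \subset \mathbb{C}[{\bf x}]$, since the condition $\partial x_j(g) = 0$ for all $j \leq i$ is equivalent to requiring $g$ to depend only on the variables $x_{i+k}$ with $k > 0$. Renaming $z_k := x_{i+k}$, the operator takes the form
$$\mathcal{E}_i = \sum_{s_1, s_2 > 0} z_{s_1+s_2}\, \partial z_{s_1}\, \partial z_{s_2}$$
acting on $\mathbb{C}[z_1, z_2, \ldots]$.

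The key observation is that $\mathcal{E}_i$ respects two natural $\mathbb{Z}_{\geq 0}$-gradings simultaneously: it lowers the polynomial degree by exactly one (two partials and one multiplication), and it preserves the weight grading defined by $\operatorname{wt}(z_k) = k$, since each summand $z_{s_1+s_2}\partial z_{s_1}\partial z_{s_2}$ trades two factors $z_{s_1}, z_{s_2}$ for one factor $z_{s_1+s_2}$ of the same total weight. Writing $V_d^w$ for the subspace of polynomials of degree $d$ and weight $w$, the iterate $\mathcal{E}_i^N$ therefore restricts to a linear map $V_{N+1}^{w} \to V_1^{w}$, and $V_1^w = \mathbb{C} z_w$ is one-dimensional.

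The strategy is now to choose a weight $w$ for which the domain $V_{N+1}^{w}$ has dimension at least two. Taking $w = N+3$, a monomial basis of $V_{N+1}^{N+3}$ is indexed by partitions of $N+3$ into $N+1$ positive parts, of which there are exactly two, namely $(3, 1^N)$ and $(2, 2, 1^{N-1})$ (both legitimate because $N \geq 1$). These give the basis $\{\, z_3 z_1^{N},\; z_2^{2} z_1^{N-1}\,\}$ of $V_{N+1}^{N+3}$. Since the codomain $V_1^{N+3} = \mathbb{C} z_{N+3}$ is one-dimensional, the restriction $\mathcal{E}_i^N \colon V_{N+1}^{N+3} \to V_1^{N+3}$ has kernel of dimension at least one. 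Any non-zero $g$ in this kernel satisfies all the required properties: it lies in $\mathbb{C}[z_1, z_2, \ldots]$, so $\partial x_j(g) = 0$ for $j \leq i$; it is $\deg_x$-homogeneous with $\deg(g) = \deg_x(g) = N+1$; and by construction $\mathcal{E}_i^N(g) = 0$.

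There is no serious obstacle here: the proof reduces to a pure dimension count, the only real insight being that the extra weight grading cuts the codomain of $\mathcal{E}_i^N$ down to one dimension while the domain in the chosen weight has dimension two. As a sanity check, for $N = 1$ one verifies directly that $g = z_1 z_3 - z_2^2$ lies in the kernel (both monomials map to $2 z_4$), and for $N = 2$ the vector $g = z_1^2 z_3 - z_1 z_2^2$ works, consistent with the general dimension argument.
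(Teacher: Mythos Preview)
Your proof is correct, and in spirit it is the same idea as the paper's, though the execution differs. Both arguments exploit that $\mathcal{E}_i$ preserves the weight grading $\operatorname{wt}(x_{i+k})=k$ while lowering the polynomial degree by one, so that $\mathcal{E}_i^N$ sends the degree-$(N+1)$, weight-$w$ piece into the one-dimensional space $\mathbb{C}\,x_{i+w}$. You conclude by rank--nullity, choosing a specific weight ($w=N+3$) whose domain has dimension two. The paper instead computes explicitly that $\mathcal{E}_i^N(x_{i_1}\cdots x_{i_{N+1}})=c_N\,x_{i_1+\cdots+i_{N+1}}$ for a universal nonzero constant $c_N$, and then takes the difference of any two distinct monomials of degree $N+1$ with equal index sum. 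Your dimension count is cleaner and avoids that calculation; the paper's version yields the extra information that the map $V_{N+1}^{w}\to V_1^{w}$ is in fact surjective (not merely of rank $\le 1$), and gives a completely explicit $g$ for any chosen index sum, which is convenient later when Proposition~\ref{primitive} needs to pick the indices larger than $i+\max S$.
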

\begin{proof}
Let $i_1, \ldots, i_{N+1}$ be such that $i_s>i$ for every $s$. Then it is not hard to compute  
$$
{\mathcal E}_i^N( x_{i_1}\cdots x_{i_{N+1}} ) = c_N x_M, 
$$
where $c_N = 2^N \prod_{s=2}^{N+1}\binom{s}{2}$ and $M = i_1+\cdots +i_{N+1}$. Hence $g = x_{i_1}\cdots x_{i_{N+2}} - x_{i_1'}\cdots x_{i_{N+2}'}$, where $i_s'>i$, and $\sum_{s=1}^{n+2} i_s = \sum_{s=1}^{n+2} i_s'$, will satisfy the conditions of the lemma.
\end{proof}

\begin{proposition}\label{primitive} Let $J-iK \in {\mathbb Z}$.
\begin{itemize}
\item[(i)] If $K \neq 0$ then there is nonzero $v$ in  $M_{i, S}$ such that $e_{-i} (v) = 0$.

\item[(ii)] If  $K = 0$  then there is nonzero $v$ in  $M_{i}$ such that $e_{-i} (v) = 0$.
\end{itemize}

\end{proposition}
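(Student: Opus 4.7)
The plan is to build an explicit $e_{-i}$-primitive vector in $M_{i,S}$ (resp.\ $M_i$) as a linear combination of the elements $x_i^{s-\gamma-1}\mathcal{E}_i^s(g) \cdot \prod_l y_{k_l}^{-1}$ for $s = 0, \ldots, N-1$, with scalar coefficients determined by a two-term recurrence, where $g$ is the polynomial provided by \lemref{lemma-e-n} (suitably reinforced).

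The preparatory computation I would carry out is the following identity. If $h$ is a $\deg_x$-homogeneous polynomial of degree $N'$ depending only on variables $x_j$ with $j > i + \max\{k_1, \ldots, k_n\}$ (so that $\partial x_{i-k} h = 0$ for $k > 0$ and $\partial x_{k_l+i} h = 0$ for every $l$), then the two $y$-dependent summands of $\Phi(e_{-i})$ vanish on $x_i^{-\gamma'} h \prod_l y_{k_l}^{-1}$. The diagonal $k=m=i$ piece of the quadratic summand combines with the scalar $(J-iK)\partial x_i$ into the coefficient $-\gamma'(\gamma'+1+J-iK)$; the mixed pieces collapse into $-\gamma'$ times the Euler operator on $h$, contributing $+2\gamma'N'$; and the off-diagonal piece is exactly $-x_i^{-\gamma'}\mathcal{E}_i(h)\prod_l y_{k_l}^{-1}$. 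Altogether,
\[
\Phi(e_{-i})\Bigl(x_i^{-\gamma'} h \prod_l y_{k_l}^{-1}\Bigr) \;=\; -\gamma'(\gamma'+1+J-iK-2N')\, x_i^{-\gamma'-1} h \prod_l y_{k_l}^{-1} \;-\; x_i^{-\gamma'}\mathcal{E}_i(h)\prod_l y_{k_l}^{-1}.
\]
For part (ii), one drops the $y$-product (and replaces $\Phi$ by $\Phi'$), and $J-iK$ becomes $J$.

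I would then fix $N \geq \max(1, J-iK-1)$ and pick $g$ as in the proof of \lemref{lemma-e-n}, but with the extra constraint that all of its variable indices exceed $i + \max\{k_1, \ldots, k_n\}$; since infinitely many admissible indices remain, one can still produce a $\deg_x$-homogeneous $g$ of degree $N+1$ with $\mathcal{E}_i^N g = 0$. Setting $\gamma := 2N - (J-iK)$, the candidate primitive vector is
\[
v \;:=\; \Bigl(\prod_{l=1}^n y_{k_l}^{-1}\Bigr) \sum_{s=0}^{N-1} c_s\, x_i^{s-\gamma-1}\, \mathcal{E}_i^s(g), \qquad c_0 := 1.
\]
Applying the preparatory formula with $\gamma' = \gamma+1-s$ and $N' = N+1-s$ to each summand, collecting terms by the bidegree $(x_i$-power,\,$\mathcal{E}_i$-iterate$)$, and using $\mathcal{E}_i^N g = 0$, the equation $\Phi(e_{-i})(v) = 0$ reduces to three conditions: the coefficient at $s = 0$ vanishes automatically because $\gamma + J - iK - 2N = 0$; the coefficients at $s = 1, \ldots, N-1$ vanish iff
\[
c_s \cdot s \cdot \bigl(2N+1-(J-iK)-s\bigr) \;=\; -c_{s-1},
\]
which uniquely determines $c_1, \ldots, c_{N-1}$; and the $s = N$ contribution is killed by $\mathcal{E}_i^N g = 0$. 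The bound $N \geq J-iK-1$ ensures both that no denominator in the recursion vanishes and that every $x_i$-power $s-\gamma-1$ in $v$ is $\leq -1$, so $v \in M_{i,S}$ (resp.\ $M_i$); and $v \neq 0$ because $c_0 = 1$ and the monomials $x_i^{s-\gamma-1}\mathcal{E}_i^s(g)$ have pairwise distinct $x_i$-powers.

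The main obstacle I foresee is the coordinated choice of $N$, $\gamma$, and $g$ to ensure simultaneously that every monomial of $v$ has strictly negative $x_i$-power, that no denominator in the recursion for $c_s$ vanishes, and that the $y$-dependent summands of $\Phi(e_{-i})$ genuinely annihilate $v$. Each requirement is handled by taking $N$ large enough and tightening the support condition on $g$ slightly beyond what \lemref{lemma-e-n} literally states, so the obstruction is one of bookkeeping rather than any deep difficulty.
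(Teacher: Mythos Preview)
Your proof is correct and follows essentially the same approach as the paper: both construct the primitive vector as $\prod_l y_{k_l}^{-1}$ times a sum of the form $\sum_s c_s\,x_i^{s-\gamma-1}\,\mathcal{E}_i^s(g)$, with the same two-term recursion on the coefficients and the same sharpening of the support of $g$ (indices $>i+\max_l k_l$) to kill the $y$-dependent terms of $\Phi(e_{-i})$ for general $S$. The only cosmetic difference is that the paper first produces the corresponding positive-power primitive $\sum_{t=0}^{N} x_i^{t} g_t$ in $N_{i,\emptyset}$ and then shifts it down by $f_i^{-y}$ via the conjugation formula~\eqref{theta}, whereas you carry out the same recursion directly in negative $x_i$-powers.
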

\begin{proof} First, for any $k$, we find a primitive vector $v$ of $M_{i, \emptyset}$ (i.e. for $S= \emptyset$) with the property $\partial y_j (v) = 0$ for every $j>0$. This, in particular, implies  (ii). For this first step we fix a nonnegative integer $N$ such that $N \geq J-iK - 2$. The strategy is to find two $e_{-i}$-primitives vectors in the module $N_{i,\emptyset}$, the second one of which will be nonzero in $M_{i,\emptyset}$. Set for convenience $A_t = t(t-2N+J-iK - 3)$. Using Lemma \ref{lemma-e-n}, choose a homogeneous polynomial $g_0$ in $x_{i+1}, x_{i+2},...$ of degree $N+2$ such that ${\mathcal E}_i^{N+1} (g_0) = 0$. Then for $t = 1,...,N$ define $g_t = \frac{1}{A_1\cdots A_t} {\mathcal E}_i^{t} (g_0)$. Note that because of our choice of $N$, $A_t \neq 0$ for $t = 1,...,N$. Then using ${\mathcal E}_i(g_t) = A_{t+1}g_{t+1}$, $t\geq 0$, and ${\mathcal E}_i^{N+1} (g_0) = 0$ one easily verifies that
$$
e_{-i} \left(\sum_{t=0}^N x_i^t g_t\right) = {\mathcal E}_i(g_0) +  \sum_{t=1}^N \left( - A_t x_i^{t-1} g_t + x_i^t {\mathcal E}_i(g_t)\right) =  0.
$$
Since $\sum_{t=0}^N x_i^t g_t$ is zero in $M_{i, \emptyset}$ and $M_i$, we need to multiply this vector by a sufficiently large negative power of $x_i$. To achieve that we next apply the formula (\ref{theta}) on $w = \sum_{t=0}^N x_i^t g_t$ for $u = e_{-i}$, $g = f_i$, and the integer $y = 2N+3 - (J-iK) \geq 1$. More precisely, we use that 
$$
f_i^y e_{-i}  f_i^{-y} (w)= \sum_{j \geq 0} \binom{y}{j}\,
( \ad f_i)^j (e_{-i}) \, f_i^{-j}w.
$$
Note that we used the fact that $N_{i,\emptyset}$ is the $f_i$-localization of $N_{\emptyset}$. However, the right hand side of the above formula is
$$
\left(e_{-i} - \binom{y}{1}(h_0 - i K) f_i^{-1}- 2 \binom{y}{2} f_i^{-1}\right)w = 0
$$
This implies that $e_{-i} (x_i^{-y} w) = 0$. Note that, alternatively, one can directly prove that $e_{-i} \left(\sum_{t=0}^N x_i^ {t-y} g_t \right)$= 0. In any case, $v = \sum_{t=0}^N x_i^ {t-y} g_t$ is an $e_{-i}$-primitive vector in $M_{i, \emptyset}$ and $M_i$. Thus $T_i$ is nonzero. To see that $T_i$ is a submodule it is sufficient to recall that $e_{-i}$ is locally nilpotent on $\mathfrak g$ with respect to the adjoint action.

Now, for arbitrary nonempty set $S = \{j_i,...,j_{\ell} \}$ we need to choose a polynomial $g_0$ of $x_{i+j}, x_{i+j+1},...$, where $j > \max \{j_1,...,j_{\ell} \}$ for all $s$. This is possible since in the proof of Lemma \ref{lemma-e-n} we may choose all $i_1,...,i_{N+2}, i_1',...,i_{n+2}'$ to be bigger than $i + \max \{j_1,...,j_{\ell} \}$. But then we repeat the steps in the proof replacing  $\sum_{t=0}^N x_i^t g_t$ by $y_{j_1}^{-1}\cdots y_{j_{\ell}}^{-1} \sum_{t=0}^N x_i^t g_t$. Hence $v = y_{j_1}^{-1}\cdots y_{j_{\ell}}^{-1} \sum_{t=0}^N x_i^ {t-y} g_t$ is an $e_{-i}$-primitive vector in $M_{i, S}$.
\end{proof}

\begin{corollary} \label{m-i-red}
Let $J-iK$ be an integer. Then both  $M_{i,S}$ ($K \neq 0$) and $M_i$ ($K=0$) are reducible modules.
\end{corollary}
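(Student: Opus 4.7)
The plan is to produce a proper nonzero submodule $T$ of $M_{i,S}$ (respectively, $M_i$) consisting of those vectors on which $e_{-i}$ acts locally nilpotently. Proposition~\ref{primitive} supplies a nonzero vector $v$ with $e_{-i}(v)=0$, so $v\in T$ and $T$ is nonzero. It then suffices to verify that $T$ is a $U$-submodule and that $T$ is proper.

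For the submodule property I would exploit that $e_{-i}$ is a real root vector, hence $\ad e_{-i}$ acts locally nilpotently on $U$. Given $u\in U$ and $w\in T$, pick $k$ with $(\ad e_{-i})^k(u)=0$ and $n$ with $e_{-i}^n(w)=0$; the standard binomial expansion
$$
e_{-i}^{n+k}(uw)=\sum_{j\geq 0}\binom{n+k}{j}(\ad e_{-i})^j(u)\,e_{-i}^{n+k-j}(w)
$$
vanishes term by term, because either $j\geq k$ (killing the first factor) or $n+k-j\geq n$ (killing the second). Thus $uw\in T$.

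For properness I would exhibit an element of $M_{i,S}$ on which $e_{-i}$ is not locally nilpotent. The natural candidate is $w_k = x_i^{-k}y_{k_1}^{-1}\cdots y_{k_n}^{-1}$ (or simply $x_i^{-k}$ in the $M_i$ case) for $k\geq 1$. Using the FFFR formula for $e_{-i}$ from Section~\ref{sec-fffr}, every summand whose differential part is not a power of $\partial x_i$ must annihilate $w_k$, because $w_k$ depends on no other $x_l$ and the remaining sums are indexed over $k'>0$ or $m>0$, excluding the indices ($k'=0$ or $m=0$) that would preserve $x_i$. A direct computation then gives
$$
e_{-i}(w_k) = -k\bigl(k+1+J-iK\bigr)\, w_{k+1},
$$
and iterating yields
$$
e_{-i}^{n}(w_k) = (-1)^n \prod_{j=0}^{n-1}(k+j)(k+j+1+J-iK)\, w_{k+n}.
$$
Since $J-iK$ is a fixed integer, choosing $k\geq \max(1,\,iK-J)$ makes every factor strictly positive, so $e_{-i}^{n}(w_k)\neq 0$ for all $n\geq 0$, whence $w_k\notin T$, and $T$ is proper.

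The only delicate step is the last computation: one must verify that the second and third sums in the formula for $e_{-i}$ vanish on $w_k$ (the key point being that the indices $k'=0$ and $m=0$ which would otherwise survive are excluded by the ranges $k'>0$ and $m>0$), and that the first sum contributes only from the diagonal term $l=m=i$. Once this bookkeeping is done, the displayed scalar and the reducibility conclusion are immediate.
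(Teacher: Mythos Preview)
Your proof is correct and follows essentially the same strategy as the paper: both exhibit the set $T$ of $e_{-i}$-locally-nilpotent vectors as a nonzero submodule (via Proposition~\ref{primitive} and ad-local-nilpotence of $e_{-i}$) and then show $T$ is proper by producing an element on which $e_{-i}$ acts injectively. Your explicit choice $w_k = x_i^{-k}y_{k_1}^{-1}\cdots y_{k_n}^{-1}$ with $k \geq \max(1,\,iK-J)$ and the computed scalar $-k(k+1+J-iK)$ make the verification cleaner than the paper's one-line sketch.
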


\begin{proof}
Let $K \neq 0$. Assume, on the contrary, that $M_{i,S}$ is irreducible.  Then the space $T_{i, S}$ which  consists of the elements of $M_{i,S}$ on which $e_{-i}$ acts locally nilpotent  forms a submodule of $M_{i,S}$. This is a standard fact and follows from the property that $e_{-i}$ is ad-locally nilpotent on $\mathfrak g$. By Proposition \ref{primitive}(i) $T_{i,S}$ is nontrivial. Hence the action of $e_{-i}$ on all vectors of $M_{i,S} = T_{i,S}$ is locally nilpotent. On the other hand, it is not hard to verify that for a sufficiently large $t$, $e_{-i}^n (x_i^ty_{j_1}^{-1}\cdots y_{j_{\ell}}^{-1}) \neq 0$, for every $n\geq 1$. This leads to a contradiction.  The statement for $M_i$  follows similarly from Proposition \ref{primitive}(ii).
\end{proof}

\subsection{The case $K\neq0$} 
In this subsection we prove the necessary and sufficient conditions for  the modules $M_S$ and $M_{i,S}$ to be irreducible, see Theorem ~\ref{main}(i). We first note that if $K=0$ then both modules are reducible. This follows from the fact that the submodule generated by $y_{j_1}^{-1}\cdots y_{j_{\ell}}^{-1}y_j$ (respectively, $x_i^{-1}y_{j_1}^{-1}\cdots y_{j_{\ell}}^{-1}y_j$) for some $j \notin S$ forms a nontrivial proper submodule of  $M_S$ (respectively, $M_{i,S}$). Next, recall that by Corollary \ref{m-i-red}, $M_{i,S}$ is reducible if $J-iK \in {\mathbb Z}$.  Hence, Theorem~\ref{main}(i)  is an immediate consequence of the following proposition.

\begin{prop}\label{prop-main} Let $K \neq 0$. Then we have the following.

\begin{itemize}
\item[(i)] \label{M'J'} 
\begin{enumerate}

\item[(a)] $U(y_{k_1}^{-1}\dots y_{k_n}^{-1})=M_{S}$.

\item[(b)] Let $v\in M_{S}$ be  nonzero. Then there exists $u\in U$ such that $u(v)=y_{k_1}^{-1}\dots y_{k_n}^{-1}$.
\end{enumerate}

\item[(ii)] \label{prop_prep} Let $J-iK\notin\mathbb Z$. Then
\begin{enumerate}

\item[(a)] $U(x_i^{-1}y_{k_1}^{-1}\dots y_{k_n}^{-1})=M_{i,S}$.

\item[(b)] Let $v\in M_{i,S}$ be nonzero. Then there exists $u\in U$ such that $u(v)=x_i^{-1}y_{k_1}^{-1}\dots y_{k_n}^{-1}$.
\end{enumerate}
\end{itemize}
\end{prop}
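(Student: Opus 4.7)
The plan is to treat (i)(a) and (ii)(a) as parallel cyclicity statements and (i)(b) and (ii)(b) as parallel reduction statements; the extra hypothesis $J-iK \notin \mathbb{Z}$ in part (ii) will enter only at the steps involving $x_i$-powers. Throughout I write $v_0 = y_{k_1}^{-1}\cdots y_{k_n}^{-1}$ and $v_0' = x_i^{-1}v_0$ for the proposed cyclic vectors.

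For the cyclicity statements (a), I set $W = U(v_0)$ or $U(v_0')$ and show $W$ contains every monomial. The key observation is that on any $x$-free vector, $\Phi(h_{k_s})$ reduces to the scalar operator $2k_s K\,\partial y_{k_s}$, and $\Phi(h_{-m})$ for $m \in \mathbb{N}\setminus S$ reduces to multiplication by $y_m$. In case (i), an induction on $\sum_s \gamma_s + \sum_{m \notin S}\beta_m$ installs every pure-$y$ monomial of $M_S$ in $W$, and $\Phi(f_j) = x_j$ installs arbitrary $x$-polynomial factors on top. In case (ii), a direct calculation on $v_0$ with only an $x_i^{-\alpha}$-factor gives
\[ \Phi(e_{-i})(x_i^{-\alpha}v_0) \;=\; -\alpha\,(\alpha + 1 + J - iK)\,x_i^{-\alpha-1}v_0, \]
so iterating $\Phi(e_{-i})$ produces $x_i^{-\alpha}v_0 \in W$ for every $\alpha \geq 1$, nonvanishing being guaranteed at each step by $J-iK \notin \mathbb{Z}$. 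The $y$-modifications are then installed by applying $\Phi(h_{k_s})$ and $\Phi(h_{-m})$ to these vectors; the parasitic $x$-terms that appear (because the target vector is no longer $x$-free) are in turn of the form $\Phi(f_{i+k_s})$ applied to vectors of smaller $x_i^{-1}$-degree already known to lie in $W$, so they can be subtracted off. A nested induction then delivers every monomial of $M_{i,S}$.

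For (b), given $0 \neq v$ I reduce $v$ to a nonzero scalar multiple of $v_0$ (respectively $v_0'$) in four stages, organized via the degrees of Definition \ref{deg}: (1) drive $\deg_{y^+}$ to zero with $\Phi(h_m)$, $m > 0$, $m \notin S$, whose leading symbol is $2mK\,\partial y_m$; (2) drive $\deg_{x^+}$ to zero with the $\Phi(e_n)$'s, each of which lowers $\deg_{x^+}$ by one through its leading linear symbol $(Kn+J)\,\partial x_{-n}$; (3) on the resulting pure-$y$ vector, adjust the $y_{k_s}$-exponents to $-1$ by alternating $\Phi(h_{-k_s})$ and $\Phi(h_{k_s})$, which on $x$-free vectors act as multiplication by $y_{k_s}$ and as $2k_s K\,\partial y_{k_s}$ respectively; and (4) in case (ii) only, normalize the $x_i$-exponent to $-1$ by alternating $\Phi(f_i) = x_i$ with the $\Phi(e_{-i})$-identity above. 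The hypotheses $K \neq 0$ in (i) and $J - iK \notin \mathbb{Z}$ in (ii) enter precisely in steps (2) and (4), by ensuring one can always pick $n$ with $Kn + J \neq 0$ and that the scalars $\alpha + 1 + J - iK$ never vanish.

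The principal obstacle is step (2): the $\Phi(e_n)$'s are second-order in $\partial x$, so they produce a quadratic correction $-\sum_{m,k} x_{k+m+n}\partial x_k\partial x_m$ that creates many subleading monomials capable of interfering with the intended leading-term cancellation. The natural remedy is to work in the associated graded module of $M_S$ (or $M_{i,S}$) with respect to a monomial ordering on positive $x$-exponents: the graded symbol of $\Phi(e_n)$ collapses to the constant-coefficient operator $(Kn+J)\,\partial x_{-n}$, transitivity of the action is then immediate from the nonvanishing of $Kn + J$ for a well-chosen $n$, and the reductions lift back to $M_S$ and $M_{i,S}$ by a standard filtered-to-graded argument. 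Steps (1), (3), and (4) admit the same graded interpretation but are essentially linear and do not present serious difficulty.
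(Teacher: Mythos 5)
Your parts (a) are essentially the paper's argument: for (i)(a) the paper applies the monomial $f_{i_k}^{\alpha_k}\cdots h_{k_1}^{\gamma_1-1}\cdots h_{-j_l}^{\beta_l}$ to $y_{k_1}^{-1}\cdots y_{k_n}^{-1}$, and for (ii)(a) it uses exactly your $e_{-i}$-identity together with the corrected operators $h_{\mp s}+\tfrac{2}{p+1+J-iK}f_{i\mp s}e_{-i}$, which is your ``subtract off the parasitic terms'' step made explicit. The problem is in parts (b), where your degree-reduction mechanism has a genuine gap.

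Your step (2) rests on the claim that the graded symbol of $\Phi(e_n)$ collapses to $(Kn+J)\,\partial x_{-n}$. This is not so: every term of $\Phi(e_n)$ (the quadratic term $-\sum x_{k+m+n}\partial x_k\partial x_m$, the term $\sum_{k>0}y_k\partial x_{-k-n}$, the term $2K\sum m\,\partial y_m\partial x_{m-n}$, and the linear term) lowers $\deg_x$ by exactly one, so $\Phi(e_n)$ is already $\deg_x$-homogeneous and the quadratic correction survives in any associated graded for this filtration; nor is there a reason a monomial order would make the linear term dominant. Moreover the index $n$ is forced by the support of $v$ (you need $\partial x_{-n}(v)\neq 0$), so you cannot ``always pick $n$ with $Kn+J\neq 0$'': if $v$ involves a single variable $x_j$ with $-jK+J=0$ the linear term vanishes outright. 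The paper's mechanism is different and is the key idea you are missing: apply $e_{-n}$ for $n\gg 0$ and use the term $\sum_{k>0}y_k\partial x_{n-k}$, which produces $y_{n-j}\,\partial x_j(v)$ with a \emph{fresh} variable $y_{n-j}$ occurring nowhere else in $e_{-n}(v)$; this tags the desired contribution and guarantees nonvanishing with no hypothesis on $Kn+J$ and no control of the quadratic terms (Lemma~\ref{e-n}). In case (ii)(b) there is a further difficulty you do not address: applying $e_{-n}$ to $x_i^{-1}h$ scatters the output over $x_i$-degrees $-3,-2,-1$, and the paper must compose with $(e_{-i}f_i-A_{\alpha,1})(e_{-i}f_i-A_{\alpha+1,2})$ to project onto the right component (Lemma~\ref{e-n-i}); the hypothesis $J-iK\notin\mathbb{Z}$ enters through the nonvanishing of $C_\alpha=2(2-2\alpha+J-iK)(3-2\alpha+J-iK)$, not through $Kn+J$. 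Finally, your ordering of steps is backwards: for $m>0$, $\Phi(h_m)=-2\sum x_{k+m}\partial x_k+2mK\partial y_m$ does not lower $\deg_{y^+}$ on a vector still containing $x$'s, since the first summand preserves the $y$-multidegree; the paper therefore reduces $\deg_x$ to zero \emph{first} and only then manipulates the $y$-exponents.
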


\subsubsection{Proof of \propref{prop-main}\rm{(i)}\rm{(a)}}

\begin{lemma}\label{prop4.4}
Let $v=y_{k_1}^{-\gamma_1}\dots y_{k_n}^{-\gamma_n}y_{j_1}^{\beta_1}\ldots y_{j_l}^{\beta_l}\in M_{S}$ and let $s>0$. Then there exists $u_1\in U$ such that $u_1(v)=y_sv$. Moreover, if $s\in\{k_1,\dots,k_n,j_1,\dots,j_l\}$ then there exists $u_2\in U$ such that $u_2(v)=y_s^{-1}v$.
\end{lemma}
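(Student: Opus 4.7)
The plan is to exploit the fact that $v$ involves only the $y$-variables and contains no $x_n$'s, so that the differential operators in $\Phi(h_{\pm s})$ simplify drastically.

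First, to obtain $y_s v$, I would apply $h_{-s}$. From the defining formulas in \secref{sec-fffr},
$$\Phi(h_{-s}) = -2\sum_{m\in\Z} x_{m-s}\,\partial x_m \;+\; y_s,$$
and the first summand annihilates every monomial in $\C[{\bf y}_S^{\pm 1}, \widehat{\bf y}_S]$ because each term contains some $\partial x_m$. Hence $h_{-s}(v) = y_s v$, and $u_1 = h_{-s}$ does the job.

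Second, to obtain $y_s^{-1} v$ when $s \in \{k_1,\dots,k_n,j_1,\dots,j_l\}$, I would use
$$\Phi(h_s) = -2\sum_{m\in\Z} x_{m+s}\,\partial x_m \;+\; 2sK\,\partial y_s,$$
which on $v$ reduces to $2sK\,\partial y_s(v)$ for the same reason. If $s = k_r$, then $\partial y_s(v) = -\gamma_r\, y_s^{-1} v$, so $h_s(v) = -2sK\gamma_r\, y_s^{-1} v$, and the scalar is nonzero because $K \neq 0$ and $\gamma_r \geq 1$. If instead $s = j_q$, then $\partial y_s(v) = \beta_q\, y_s^{-1} v$, which is a well-defined element of $M_S$ since $\beta_q \geq 1$, and the scalar $2sK\beta_q$ is again nonzero. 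Since the sets $\{k_1,\dots,k_n\}$ and $\{j_1,\dots,j_l\}$ are disjoint by construction of $v$ as a representative in $M_S$, exactly one of the two subcases applies, and multiplying $h_s$ by the inverse of the corresponding scalar yields the required $u_2$.

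There is essentially no obstacle here; the lemma is a clean verification once one writes out $\Phi(h_{\pm s})$ on an $x$-free element. The only point requiring care is to separate the subcases of a negative versus a positive exponent of $y_s$ appearing in $v$, and to observe that the hypothesis $K \neq 0$ is precisely what ensures the coefficient arising from $h_s$ does not vanish.
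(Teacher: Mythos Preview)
Your proof is correct and is essentially the same as the paper's: the paper also takes $u_1=h_{-s}$ and $u_2=\dfrac{1}{c_s}h_s$, computing the constant $c_s$ exactly as you did in the two subcases (with $c_s=-2k_t\gamma_tK$ or $c_s=2j_t\beta_tK$). You have simply written out the verification in a bit more detail.
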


\begin{proof}
Note that
\begin{equation*}
h_{-s}(v)=y_sv\text{ and }h_s(v)=c_sy_s^{-1}v,
\end{equation*}
where
\begin{align}\label{csfirst}
c_s:&=\begin{cases}
  -2k_t\gamma_tK,&\text{if }s=k_t,
\\ 2j_t\beta_tK,&\text{if }s=j_t,
\\0,&\text{otherwise.}
\end{cases}
\end{align}

Define $u_1=h_{-s}$ and, if $s\in\{k_1,\dots,k_n,j_1,\dots,j_l\}$, define $u_2=\dfrac{1}{c_s}h_s$.\end{proof}

Now we prove Proposition \ref{prop-main}\rm{(i)}(a). Suppose that
\begin{equation*}
u=f_{i_k}^{\alpha_k}\dots f_{i_1}^{\alpha_1}h_{k_1}^{\gamma_1-1}\dots h_{k_n}^{\gamma_n-1}h_{-j_1}^{\beta_1}\dots h_{-j_l}^{\beta_l},
\end{equation*}
where $\gamma_i\geq 1$ for all $i$. Then following the proof of Lemma \ref{prop4.4},
$u(y_{k_1}^{-1}\dots y_{k_n}^{-1})=Ay_{k_1}^{-\gamma_1}\dots y_{k_n}^{-\gamma_n}y_{j_1}^{\beta_1}\ldots y_{j_l}^{\beta_l}x_{i_1}^{\alpha_1}\ldots x_{i_k}^{\alpha_k}$, for some  $A\in\mathbb C$, $A \neq 0$. Writing an  element of $M_S$ as a sum of monomials, we conclude that $U(y_{k_1}^{-1}\dots y_{k_n}^{-1})=M_S$ .

\subsubsection{Proof of  \propref{prop-main}\rm{(i)}\rm{(b)}}
\begin{lemma} \label{cor-deg-0}
Let $v$ be a nonzero vector in $M_{S}$ such that $\deg_x(v)=0$. There exists $u \in U$ such that $u(v) = y_{k_1}^{-1}\dots y_{k_n}^{-1}$.
\end{lemma}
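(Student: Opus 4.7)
The plan is to exploit the fact that when $\deg_x(v)=0$ the free field realization restricts to a Heisenberg-type action on the $y$-variables only, and then perform a two-stage degree reduction to reach the monomial $y_{k_1}^{-1}\cdots y_{k_n}^{-1}$.

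First I would observe that $v$ lies in the subspace $M_S^0\subset M_S$ spanned by monomials involving only the $y$'s. Inspecting the formula for $\Phi(h_n)$ given in \secref{sec-fffr}, every $x$-contribution vanishes on $M_S^0$: on this subspace $\Phi(h_{-s})$ is multiplication by $y_s$ and $\Phi(h_s)=2sK\,\partial y_s$ for each $s>0$, while $\Phi(h_0)$ is the scalar $J$. Consequently $M_S^0$ is stable under every $h_n$, and since $K\neq 0$ both raising and lowering operators for each $y_s$ lie in the image of $\Phi$.

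\emph{Stage 1 — eliminate positive $y$-powers.} Writing $v=\sum_\alpha c_\alpha m_\alpha$ in the monomial form of \defref{deg}, set $\beta_j(v):=\max_\alpha \beta_{j,\alpha}$ for $j\notin S$, together with $\chi(v):=\sum_{j\notin S}\beta_j(v)$. I would induct on $\chi$: when $\chi(v)>0$, choose $j\notin S$ with $\beta_j(v)\geq 1$ and act by $h_j$. The top-$y_j$-monomials of $v$ are sent by $\partial y_j$ to linearly independent nonzero monomials, so $h_j(v)\neq 0$; moreover $\beta_j$ drops by exactly one while no other $\beta_l$ increases, so $\chi$ strictly decreases. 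Iterating produces a nonzero element of the form $\sum c'_\alpha y_{k_1}^{-\gamma_{1,\alpha}}\cdots y_{k_n}^{-\gamma_{n,\alpha}}$.

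\emph{Stage 2 — reduce each $\gamma_i$ to $1$.} Setting $\gamma_i^{\max}(v):=\max_\alpha\gamma_{i,\alpha}$ and $\phi(v):=\sum_{i=1}^n(\gamma_i^{\max}(v)-1)$, I pick $i$ with $\gamma_i^{\max}(v)\geq 2$ whenever $\phi(v)>0$ and act by $h_{-k_i}$, i.e.\ multiplication by $y_{k_i}$. The monomials with $\gamma_{i,\alpha}\geq 2$ yield terms whose new $y_{k_i}$-exponent is still strictly negative, hence nonzero in $M_S$, while those with $\gamma_{i,\alpha}=1$ are sent into $I_S$ and vanish in the quotient $M_S=N_S/I_S$. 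Thus the top-$\gamma_i$ stratum survives, $h_{-k_i}(v)\neq 0$, and $\phi$ strictly decreases; iteration brings us to $c\cdot y_{k_1}^{-1}\cdots y_{k_n}^{-1}$ with $c\neq 0$, after which rescaling the composite operator by $1/c$ delivers the required $u$. The main obstacle I anticipate is ensuring that neither $\partial y_j$ nor $y_{k_i}$-multiplication annihilates $v$ prematurely — both are genuinely lossy, the former on monomials lacking $y_j$ and the latter on monomials with $\gamma_i=1$ after quotienting by $I_S$. The greedy rule (always act with the operator corresponding to the currently extremal degree) sidesteps this by guaranteeing that the top-degree stratum of $v$ persists at every step, providing simultaneously nonvanishing and a strict decrease of the complexity measure.
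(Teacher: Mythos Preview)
Your proof is correct and follows essentially the same strategy as the paper: both exploit that on $\deg_x=0$ elements the operators $h_{\pm s}$ act as multiplication by $y_s$ and (a nonzero scalar times) $\partial y_s$, then perform a two-stage reduction---one stage killing the positive $y_j$-powers via $h_j$ ($j\notin S$) and the other forcing every $\gamma_i$ down to $1$ via $h_{-k_i}$. The only difference is that the paper runs these two stages in the opposite order (first multiply by $y_{k_i}$'s, then differentiate), but the argument and the key nonvanishing checks are the same.
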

\begin{proof}
By Lemma \ref{prop4.4}, with consecutive applications of  $h_{-t}$, $t>0$, to $v$, we obtain a nonzero element $v_1 = y_{k_1}^{-1}\dots y_{k_n}^{-1} g({\bf y})$ of $M_{S}$, where $g$ is a polynomial such that $\partial y_{k_i} (g) = 0$. If $\deg_{y}g = 0$, then the needed element $u \in U$ is found. If $\deg_{y}g > 0$ we fix $s \in {\mathbb N}$ such that $\partial y_s (g) \neq 0$ and observe that $h_s v_1 = y_{k_1}^{-1}\dots y_{k_n}^{-1}  \partial y_s (g)$. We complete the proof using induction on  $\deg_{y}g$.
\end{proof}

The following lemma follows by a straightforward computation.
\begin{lemma} \label{e-n}
Let $v\in N_S$. Then for every $j \in {\mathbb Z}$, there is sufficiently large $n>0$ such that 
$$
e_{-n} (v) = y_{n-j} \partial x_j (v) + v'
$$
for some $v' \in N_S$ such that $\partial y_{n-j} (v') = 0$.
\end{lemma}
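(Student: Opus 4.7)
The plan is to write out $e_{-n}$ explicitly using the free field realization formula from \secref{sec-fffr} and to identify which summand produces $y_{n-j}\,\partial x_j(v)$. Substituting $-n$ for $n$ in $\Phi(e_n)$ gives
$$e_{-n} = -\sum_{m,k\in\Z} x_{k+m-n}\partial x_k\partial x_m + \sum_{k>0} y_k \partial x_{n-k} + 2K\sum_{m>0} m\, \partial y_m \partial x_{m+n} + (J - Kn)\,\partial x_n.$$
Only the second sum multiplies its input by a variable $y_\star$. Setting $k = n - j$ (which is positive as soon as $n > j$) singles out exactly $y_{n-j}\,\partial x_j(v)$; I would peel this off and collect everything else into $v'$.

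It remains to show that $\partial y_{n-j}(v') = 0$ for $n$ sufficiently large. Since $v$ is a single element of $N_S$, only finitely many indices $m$ occur in the ${\bf y}$-variables of $v$, so I would take $n$ large enough that $n - j$ strictly exceeds every such index and in particular lies outside $S$; under this choice $y_{n-j}$ does not appear in $v$ at all and hence $\partial y_{n-j}(v) = 0$. The first and fourth summands of $e_{-n}$ involve only $x$-operations and therefore cannot introduce $y_{n-j}$; the remaining terms $y_k\partial x_{n-k}(v)$ with $k \neq n-j$ insert $y_k \neq y_{n-j}$ and again apply only an $x$-derivative; in the third sum the piece with $m = n - j$ equals $2K(n-j)\,\partial y_{n-j}\partial x_{2n-j}(v) = 0$, while the other indices $m \neq n-j$ act by $\partial y_m$, which also cannot create $y_{n-j}$. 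In every case the output lies in the kernel of $\partial y_{n-j}$, and it lies in $N_S$ because mere differentiation does not create negative powers of $y_m$ for $m \notin S$.

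The argument is essentially bookkeeping; the only subtlety is choosing $n$ large enough to simultaneously (i) make $k = n - j$ a valid positive index in the $y$-multiplication sum, (ii) remove $y_{n-j}$ from $v$ and ensure $n-j \notin S$, and (iii) guarantee that no secondary operation in $e_{-n}$ involving the index $n - j$ accidentally reintroduces that variable. All three conditions are forced at once by requiring $n - j$ to exceed the finite set of $y$-indices appearing in $v$ together with $\max S$.
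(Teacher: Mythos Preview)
Your proof is correct and is precisely the ``straightforward computation'' the paper invokes without giving details: you expand $\Phi(e_{-n})$, isolate the summand $y_{n-j}\partial x_j$ from $\sum_{k>0}y_k\partial x_{n-k}$, and verify that for $n$ large enough (so that $n-j>0$, $n-j\notin S$, and $y_{n-j}$ does not occur in $v$) every remaining term lies in $\ker\partial y_{n-j}$. There is nothing to add.
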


We prove \propref{prop-main}\rm{(i)}\rm{(b)} by induction on $\deg_x(v)$.
The base  case  $\deg_x(v)=0$ follows from Lemma \ref{cor-deg-0}. Let now $\deg_x(v)\geq 1$ and fix $j \in {\mathbb Z}$ such that $\partial x_j (v) \neq 0$. Then by Lemma \ref{e-n} we find $n\gg 0$ such that $e_{-n}v \neq 0$ in $M_S$. Using the induction hypothesis and the fact that $\deg_x(e_{-n} (v)) <  \deg_x(v) $ we complete the proof.

\subsubsection{Proof of \propref{prop-main}\rm{(ii)}{\rm (a)}}  \label{sub-1}

\begin{lemma}\label{prop4.10}
Let $J-iK \notin {\mathbb Z}$ and  $v=x_i^{-p}y_{k_1}^{-\gamma_1}\dots y_{k_n}^{-\gamma_n}y_{j_1}^{\beta_1}\ldots y_{j_l}^{\beta_l}\in M_{i,S}$. Let $s>0$. Then there exist $u_1,u_2\in U$ such that $u_1(v)=x_i^{-1}v$ and $u_2(v)=y_s v$. Moreover, if $s\in\{k_1,\dots,k_n,j_1,\dots,j_l\}$ then there exists $u_3\in U$ such that $u_3(v)=y_s^{-1}v$.
\end{lemma}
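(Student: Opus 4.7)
\medskip

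\noindent\textbf{Proof plan.} The strategy is to pull each $u_k$ directly out of the free field realization formulas of Proposition~\ref{FFFRhom}, exploiting the fact that the monomial $v$ depends on the $x$-variables only through $x_i$.

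First I would construct $u_1$. In the formula for $e_{-i}$, the two middle sums $\sum_{k>0}y_k\partial x_{-k+i}$ and $2K\sum_{m>0}m\partial y_m\partial x_{m+i}$ each require an index equal to $0$ in order to act nontrivially on $v$ and therefore vanish, while the quadratic term $-\sum_{k,m}x_{k+m-i}\partial x_k\partial x_m$ collapses to the single summand with $(k,m)=(i,i)$. Combining this with the contribution of $(J-iK)\partial x_i$ yields
\begin{equation*}
e_{-i}(v) \;=\; -p\bigl(J-iK+p+1\bigr)\,x_i^{-1}v.
\end{equation*}
Since $v\neq 0$ in $M_{i,S}$ forces $p\geq 1$, and the hypothesis $J-iK\notin \mathbb{Z}$ forces $J-iK+p+1\neq 0$, a suitable scalar multiple of $e_{-i}$ serves as $u_1$.

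Next I would build $u_2$ and $u_3$ from $h_{-s}$ and $h_s$ respectively. Applying the free field formulas directly to $v$ produces
\begin{equation*}
h_{-s}(v) \;=\; 2p\,x_{i-s}x_i^{-1}v + y_s v, \qquad h_s(v) \;=\; 2p\,x_{i+s}x_i^{-1}v + c_s\,y_s^{-1}v,
\end{equation*}
where $c_s$ is the constant from Lemma~\ref{prop4.4}, which is nonzero here because $K\neq 0$ and $s\in\{k_1,\dots,k_n,j_1,\dots,j_l\}$. The spurious terms $x_{i\mp s}x_i^{-1}v$ are, by the previous step, scalar multiples of $f_{i\mp s}e_{-i}(v)$, since $f_{i\mp s}$ acts on $M_{i,S}$ by multiplication by $x_{i\mp s}$. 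Thus the explicit combinations
\begin{equation*}
u_2 \;=\; h_{-s} + \tfrac{2}{J-iK+p+1}\,f_{i-s}e_{-i}, \qquad u_3 \;=\; \tfrac{1}{c_s}\,h_s + \tfrac{2}{c_s(J-iK+p+1)}\,f_{i+s}e_{-i}
\end{equation*}
satisfy $u_2(v)=y_s v$ and $u_3(v)=y_s^{-1}v$.

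The only substantive obstacle is arithmetic rather than structural: one must check that the denominators $p$, $c_s$, and above all $J-iK+p+1$ never vanish. The first two are immediate from $v\neq 0$, $K\neq 0$, and $s\in\{k_1,\dots,k_n,j_1,\dots,j_l\}$, while the nonvanishing of $J-iK+p+1$ is precisely the reason the hypothesis $J-iK\notin\mathbb{Z}$ appears in the statement.
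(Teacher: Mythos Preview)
Your proof is correct and is essentially identical to the paper's: the paper also defines $u_1$ as the appropriate scalar multiple of $e_{-i}$, and $u_2$, $u_3$ as exactly the same combinations $h_{\mp s}+\tfrac{2}{p+1+J-iK}f_{i\mp s}e_{-i}$ (up to the constant $1/c_s$). Your additional explanation of why the middle sums in $e_{-i}$ vanish on $v$ is a welcome elaboration, but the underlying computation and the choice of $u_k$ coincide with the paper's.
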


\begin{proof}
Set
\begin{equation*}
u_1=\dfrac{-1}{p(p+1+J-iK)}e_{-i} \text{ and }
u_2=h_{-s}+\dfrac{2}{p+1+J-iK}f_{i-s}e_{-i}.
\end{equation*}
Also  for $s\in\{k_1,\dots,k_n,j_1,\dots,j_l\}$ set
\begin{align*}
 u_3=-\dfrac{1}{c_s}\Big(h_s+\dfrac{2}{p+1+J-iK}f_{i+s}e_{-i}\Big),
\end{align*}
where $c_s\neq0$ is defined in \eqnref{csfirst}.  Then we have
$ u_1(v)=x_i^{-1}v, u_2(v)=y_sv$, and $u_3(v)=y_s^{-1}v$.
\end{proof}
\begin{remark} \label{non-int}
The last lemma can be easily generalized to the case when $p$ is any complex number such that $p + J - iK$ is not an integer. This generalization will be used when twisted localizations of $M_{i,S}$ are considered (see Theorem \ref{main5}).
\end{remark}

Applying similar argument as in the proof of Proposition \ref{prop-main}\rm{(i)} and using consecutive applications of Lemma \ref{prop4.10}
 one can easily show that $U(x_i^{-1}y_{k_1}^{-1}\dots y_{k_n}^{-1})=M_{i,S}$. Hence Proposition \ref{prop-main}\rm{(ii)}(a) follows.

\subsubsection{Proof of Proposition  \ref{prop-main}\rm{(ii)}\rm{(b)}} \label{sub-2}

\begin{lemma}\label{deg0}
Let $J-iK \notin {\mathbb Z}$ and $v$ be nonzero vector in $M_{i,S}$ such that $\deg_{x^+}(v)=0$. Then there exists $u\in U$ such that $u(v)=x_i^{-1}y_{k_1}^{-1}\dots y_{k_n}^{-1}$.
\end{lemma}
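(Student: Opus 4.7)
The plan is to adapt the three-step reduction underlying Lemma~\ref{cor-deg-0} to the $M_{i,S}$-setting, using the explicit operators from Lemma~\ref{prop4.10}. First I would isolate a single $x_i$-power appearing in $v$, then lower that power to $-1$, and finally reduce the $y$-part via a Weyl-algebra-type action on $\mathcal{V} := x_i^{-1}\mathbb{C}[{\bf y}_S^{-1},\widehat{\bf y}_S]$.

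\textbf{Step 1 (single $x_i$-power).} Since $\deg_{x^+}(v) = 0$, the vector carries no positive $x$-powers at all, so we may write $v = \sum_{p=1}^{P} x_i^{-p} g_p$ uniquely with $g_p \in \mathbb{C}[{\bf y}_S^{-1}, \widehat{\bf y}_S]$. Because $g_p$ has no $x$-dependence, the formula $h_0 \mapsto -2\sum_{m}x_m\partial x_m + J$ yields $h_0 \cdot (x_i^{-p} g_p) = (2p + J)\, x_i^{-p} g_p$, and the eigenvalues $2p + J$ are pairwise distinct. A suitable polynomial in $h_0$ (an element of $U$) therefore projects $v$ onto a single non-zero summand $v_1 = x_i^{-p_0} g_{p_0}$.

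\textbf{Step 2 (reduce $x_i$-power to $-1$).} As $f_i$ acts by multiplication by $x_i$, applying $f_i^{p_0-1}$ produces $v_2 = x_i^{-1} g_{p_0}$, still non-zero in $M_{i,S}$.

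\textbf{Step 3 (reduce the $y$-part).} On $\mathcal{V}$, introduce for each $s > 0$
\[
Y_s := h_{-s} + \frac{2}{2 + J - iK}\, f_{i-s}\, e_{-i}, \qquad
D_s := \frac{1}{2sK}\!\left(h_s + \frac{2}{2 + J - iK}\, f_{i+s}\, e_{-i}\right).
\]
The cancellation in the proof of Lemma~\ref{prop4.10} (specialised to $p = 1$) remains valid for an arbitrary $g \in \mathbb{C}[{\bf y}_S^{-1}, \widehat{\bf y}_S]$ in place of a single monomial, and shows that $Y_s$ acts on $\mathcal{V}$ as multiplication by $y_s$ while $D_s$ acts as $\partial y_s$. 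The associative algebra generated by the $Y_s$ and $D_s$, $s > 0$, thus induces a Weyl-algebra action on $\mathcal{V}$. Since $\mathcal{V}$ decomposes as a (directed) tensor product of the simple Weyl-algebra modules $y_{k_t}^{-1}\mathbb{C}[y_{k_t}^{-1}]$ for $k_t \in S$ and $\mathbb{C}[y_j]$ for $j \in \mathbb{N}\setminus S$, it is itself simple over this Weyl algebra. Consequently there exists $u \in U$ with $u(v_2) = x_i^{-1} y_{k_1}^{-1} \cdots y_{k_n}^{-1}$, and composing the three steps produces the desired element.

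The main obstacle is Step~3: one must verify that $Y_s$ and $D_s$ act as $y_s$-multiplication and $\partial y_s$-differentiation on the whole of $\mathcal{V}$, rather than only on single monomials as in Lemma~\ref{prop4.10}. The key point is that $g$ has no $x$-content, so only $\partial x_i$ applied to $x_i^{-p}$ generates the cross-terms of the form $x_{i\mp s}x_i^{-p-1}g$; the coefficients of these terms coming from $h_{\mp s}$ and from $\tfrac{2}{p+1+J-iK}\,f_{i\mp s}\,e_{-i}$ are exact negatives of each other, producing the required cancellation independently of $g$.
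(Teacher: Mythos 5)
Your proof is correct and follows essentially the same route as the paper: the operators $Y_s$ and $D_s$ are exactly the $u_2$- and $u_3$-type operators of Lemma~\ref{prop4.10} specialized to $p=1$, the key cancellation you verify (that the cross-terms from $h_{\mp s}$ and from $\tfrac{2}{p+1+J-iK}f_{i\mp s}e_{-i}$ cancel for any $x$-free $g$, not just monomials) is precisely what the paper's ``consecutive applications of Lemma~\ref{prop4.10}'' relies on, and your final reduction of the $y$-part is the paper's induction on $\deg_y g$ repackaged as simplicity of a Weyl-algebra module. Your Steps 1--2 (isolating a single $x_i$-power via $h_0$-eigenvalues and applying $f_i^{p_0-1}$) make explicit a normalization the paper absorbs into Lemma~\ref{h0D} and the surrounding argument, which is a welcome clarification rather than a deviation.
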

\begin{proof}
With consecutive applications of Lemma \ref{prop4.10} we can find $u_1 \in U$ so that $u_1(v) = x_i^{-1}y_{k_1}^{-1}\dots y_{k_n}^{-1}g ({\bf y})$ for some nonzero polynomial $g$ such that $\partial y_{k_1} (g) =\cdots =\partial y_{k_n} (g)=0$. If $\deg_{y} g = 0$, then the needed element $u$ is found. Assume now $\deg_{y} g \geq 1$ and let $j > 0$ be such that $\partial y_j (g) \neq 0$. Then using that $u_2 (x_i^{-1}y_{k_1}^{-1}\dots y_{k_n}^{-1}g) = 2jKx_i^{-1}y_{k_1}^{-1}\dots y_{k_n}^{-1}\partial y_j (g)$ for $u_2 = h_j+\dfrac{2}{2+J-iK}f_{i+j}e_{-i}$ we complete the proof by induction on $\deg_{y} g$.
\end{proof}

\begin{definition}\label{defi} For $p \in {\mathbb C}$ and $\alpha \in {\mathbb N}$, define
$$
A_{\alpha,p}=-p(p+1-2\alpha+J-iK).
$$
\end{definition}
The above definition is motivated by the following.

\begin{lemma}\label{degi+}
Assume that $Z=y_{k_1}^{-\gamma_1}\dots y_{k_n}^{-\gamma_n}y_{j_1}^{\beta_1}\ldots y_{j_l}^{\beta_l}x_{i_1}^{\alpha_1}\ldots x_{i_k}^{\alpha_k}$, where $p, \gamma_1,\ldots,\gamma_n,$ $\beta_1,\ldots,\beta_l,$ $\alpha_1,\ldots,\alpha_k\in\mathbb N$. Then
\begin{equation*}
e_{-i}(x_i^{-p}Z)=A_{\alpha,p}x_i^{-p-1}Z+x_i^{-p}Z'+x_i^{-p+1}Z''
\end{equation*}
in $M_{i,S}$, where $\alpha=\displaystyle\sum_{s=1}^k\alpha_s = \deg_{x^+} (x_i^{-p}Z)$, and $Z'$ and $Z''$ are  $\deg_{x^+}$-homogeneous polynomials which are either zero in $M_{i,S}$ or $\deg_{x^+}(x_i^{-p+1}Z'')=\deg_{x^+}(x_i^{-p}Z')-1=\alpha-2$. In particular, $f_ie_{-i}(x_i^{-1}Z)=A_{\alpha,1}x_i^{-1}Z$.
\end{lemma}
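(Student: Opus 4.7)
The plan is to expand $e_{-i}$ according to the first free field formula as
\begin{equation*}
e_{-i} = -\sum_{k,m\in\Z}x_{k+m-i}\partial x_k\partial x_m \;+\; \sum_{k>0}y_k\partial x_{i-k} \;+\; 2K\sum_{m>0}m\,\partial y_m\partial x_{m+i} \;+\; (J-iK)\partial x_i,
\end{equation*}
apply each of these four operators to $x_i^{-p}Z$ via the Leibniz rule, and sort the result by the power of $x_i$ that appears. Since $Z$ contains no $x_i$, only a derivative $\partial x_i$ inside the operator can act on $x_i^{-p}$ and alter the $x_i$-power; neither of the two middle summands ever produces such a derivative, because $i-k\ne i$ for $k>0$ and $m+i\ne i$ for $m>0$.

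First I compute the coefficient of $x_i^{-p-1}Z$.  The linear term $(J-iK)\partial x_i$ contributes $-p(J-iK)$.  The quadratic term splits into three sub-cases: the diagonal pair $k=m=i$ gives $-x_i\cdot p(p+1)x_i^{-p-2}Z = -p(p+1)x_i^{-p-1}Z$, while each of the two mixed cases $(k=i,\,m\ne i)$ and $(k\ne i,\,m=i)$ produces $p\,x_i^{-p-1}\sum_{m\ne i}x_m\partial x_m(Z)$.  Using the Euler-type identity $\sum_{m\ne i}x_m\partial x_m(Z) = \alpha Z$, valid because $Z$ has no $x_i$ and has total $x^+$-degree $\alpha$, the combined coefficient becomes $-p(p+1-2\alpha+J-iK) = A_{\alpha,p}$, exactly as required.

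Next I identify the remaining contributions.  Apart from the $x_i^{-p-1}Z$ piece, every other contribution is of the form $x_i^{-p}\cdot(\text{polynomial in }\widehat{\bf x}_i,\,{\bf y})$, with one exception: the pairs $(k,m)$ in the quadratic piece with $k,m\ne i$ and $k+m=2i$, for which the coefficient $x_{k+m-i}$ equals $x_i$, turning $x_i^{-p}\cdot x_i$ into $x_i^{-p+1}$.  Collecting these exceptional pairs defines $Z''$, and collecting the residual $x_i^{-p}$-terms (from the quadratic piece with $k,m\ne i,\ k+m\ne 2i$, together with the full images of the two middle sums) defines $Z'$.  Inspection of each summand then shows that $Z'$ is $\deg_{x^+}$-homogeneous of degree $\alpha-1$ and $Z''$ is $\deg_{x^+}$-homogeneous of degree $\alpha-2$; either may of course vanish in $M_{i,S}$ (for instance if $\alpha<2$ no such pairs exist), as allowed in the statement.

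Finally, for the ``in particular'' claim, I specialize the decomposition to $p=1$ and multiply by $f_i=x_i$ to obtain $f_ie_{-i}(x_i^{-1}Z) = A_{\alpha,1}x_i^{-1}Z + Z' + x_iZ''$.  Both $Z'$ and $x_iZ''$ are polynomials in the full ${\bf x}$ variables (of $x_i$-degree $0$ and $1$ respectively) with Laurent coefficients in ${\bf y}_S$ and polynomial coefficients in $\widehat{\bf y}_S$, so they lie in the summand $\C[{\bf x},{\bf y}_S^{\pm1},\widehat{\bf y}_S]$ of $I_{i,S}$ and vanish in $M_{i,S}$.  The only delicate point in the whole argument is correctly assembling the three quadratic contributions to the principal coefficient into the Euler sum $\alpha Z$; once that identity is in hand, matching $A_{\alpha,p}$ and verifying the $\deg_{x^+}$-claims is routine bookkeeping.
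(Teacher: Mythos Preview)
Your proof is correct and follows essentially the same strategy as the paper's: both decompose $e_{-i}$ according to the power of $x_i$ produced, isolating the operator $e_{-i,0}=-x_i\partial x_i^2-2\big(\sum_{s\ne i}x_s\partial x_s\big)\partial x_i+(J-iK)\partial x_i$ that yields the principal term $A_{\alpha,p}x_i^{-p-1}Z$, and defining $Z'$, $Z''$ as the images of $Z$ under the remaining pieces $e_{-i,1}$ and $e_{-i,2}$. Your explicit justification of the ``in particular'' clause (observing that $Z'$ and $x_iZ''$ lie in $N_S\subset I_{i,S}$) is a small addition the paper leaves implicit.
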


\begin{proof} To prove the statement we write $e_{-i}$ (or, more precisely, $\Phi(e_{-i})$) as a sum of three $\deg_{x^+}$-homogeneous operators $e_{-i} = e_{-i,0} + e_{-i,1} + x_ie_{-i,2}$, where
\begin{eqnarray*}
e_{-i,0}& =& -x_i\partial x_i\partial x_i-2\Big(\displaystyle\sum_{s\in\mathbb Z\setminus\{i\}}x_s\partial x_s\Big)\partial x_i+(J-iK)\partial x_i,\\
e_{-i,1}& =& -\displaystyle\sum_{s_1,s_2\in\mathbb Z\setminus\{i\}\atop{s_1+s_2\neq 2i}}x_{s_1+s_2-i}\partial x_{s_1}\partial x_{s_2}+\displaystyle\sum_{s>0}y_s\partial x_{i-s}+2K\displaystyle\sum_{t>0}t\partial y_t\partial x_{t+i},\\
e_{-i,2}& =& -\displaystyle\sum_{s_1,s_2\in\mathbb Z\setminus\{i\}\atop{s_1+s_2=2i}}\partial x_{s_1}\partial x_{s_2}=-\displaystyle\sum_{s\in\Z\setminus\{i\}}\partial x_{2i-s}\partial x_s.
\end{eqnarray*}
Then we take $Z'=e_{-i,1}Z\text{ and }Z''=e_{-i,2}Z$ and verify that $e_{-i,0}(x_i^{-p}Z) = A_{\alpha,p}x_i^{-p-1}Z$.  \end{proof}

\begin{lemma} \label{e-n-i}
Let $J - iK\notin {\mathbb Z}$. Let $v = x_i^{-1}h$, where  $h = h({\bf x}, {\bf y})$ is a $\deg_{x^+}$-homogeneous polynomial such that $\deg_{x^+} h = \alpha$ and $\partial x_i (h) = 0$. Then there is $n > i$ such that $w = \left( e_{-i}f_i - A_{\alpha, 1}\right) \left( e_{-i}f_i - A_{\alpha+1, 2}\right) e_{-n} (v)$ is nonzero in $M_{i,S}$ and $ \deg_{x^{+}} w< \deg_{x^{+}}v$.

\end{lemma}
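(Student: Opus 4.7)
The plan is to compute $e_{-n}(v)$ explicitly for large $n$, apply the two shifted operators via Lemma~\ref{degi+} to force cancellations, and then verify non-vanishing by isolating a fresh-variable monomial whose coefficient is forced to be non-zero by the hypothesis $J-iK\notin\mathbb{Z}$. Throughout we assume $\alpha\geq 1$ (if $\alpha=0$ the inequality $\deg_{x^+}w<\deg_{x^+}v$ cannot hold). Let $I_h\subset\mathbb{Z}$ be the finite set of $x$-indices appearing in $h$, and pick $n>i$ larger than every element of $I_h$ and every $y$-index of $h$. Two of the four summands of $\Phi(e_{-n})$ annihilate $v$ by the choice of $n$, and sorting the remaining contributions by the resulting $x_i$-power gives
\[
e_{-n}(v)=x_i^{-1}W_1+x_i^{-2}W_2+x_i^{-3}W_3,
\]
where each $W_j$ is $\deg_{x^+}$-homogeneous and free of $x_i$, of degrees $\alpha-1,\alpha,\alpha+1$ respectively. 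The essential features are that $W_1$ contains the ``fresh-$y$'' piece $\sum_{j\in I_h}y_{n-j}\partial x_j(h)$ (from the $\sum_{s>0}y_s\partial x_{n-s}$ summand of $\Phi(e_{-n})$), that $W_2=2\sum_{m\neq i}x_{i+m-n}\partial x_m(h)$ comes from the mixed terms $k=i$ or $m=i$, and that $W_3=-2x_{2i-n}h$ comes from the diagonal $k=m=i$.

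From Lemma~\ref{degi+} (extended from monomials to arbitrary $\deg_{x^+}$-homogeneous $W$ with $\partial x_i W=0$ by linearity, and using that any $x_i^{0}$-term vanishes in $M_{i,S}$) one gets
\[
e_{-i}f_i(x_i^{-p}W)=A_{\alpha',p-1}\,x_i^{-p}W+x_i^{-p+1}e_{-i,1}(W)+x_i^{-p+2}e_{-i,2}(W)
\]
in $M_{i,S}$, where $\alpha'=\deg_{x^+}W$ and $e_{-i,1},e_{-i,2}$ are the $\deg_{x^+}$-components of $e_{-i}$ introduced in the proof of Lemma~\ref{degi+}. Consequently $(e_{-i}f_i-A_{\alpha+1,2})$ annihilates the leading $x_i^{-3}W_3$ term, and applying $(e_{-i}f_i-A_{\alpha,1})$ afterwards annihilates the leading $x_i^{-2}$-term of degree $\alpha$; tracking the lower-order corrections yields
\[
w=x_i^{-1}\bigl[A_{\alpha,1}A_{\alpha+1,2}\,W_1-A_{\alpha+1,2}\,e_{-i,1}(W_2)-A_{\alpha,1}\,e_{-i,2}(W_3)+e_{-i,1}^2(W_3)\bigr],
\]
whose bracket is $\deg_{x^+}$-homogeneous of degree $\alpha-1$. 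Hence $\deg_{x^+}w=\alpha-1<\alpha$.

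Finally, to see that $w\neq 0$, fix $j_0\in I_h$ with $\partial x_{j_0}(h)\neq 0$ and compute the coefficient $c$ of the fresh monomial $x_i^{-1}y_{n-j_0}\partial x_{j_0}(h)$ in $w$. The four bracketed terms contribute respectively $A_{\alpha,1}A_{\alpha+1,2}$, $-2A_{\alpha+1,2}$ (via the $y_s\partial x_{i-s}$ summand of $e_{-i,1}$ with $s=n-j_0$ meeting $x_{i+j_0-n}$ in $W_2$), $0$ (since $e_{-i,2}$ creates no $y$-variables and $W_3$ has no $y_{n-j_0}$), and $+4$ (via a two-step chain in $e_{-i,1}^2(W_3)$ that first produces $4x_{i-n+j_0}\partial x_{j_0}(h)$ and then $y_{n-j_0}\partial x_{j_0}(h)$). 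Substituting $A_{\alpha,1}=2\alpha-2-(J-iK)$ and $A_{\alpha+1,2}=4\alpha-2-2(J-iK)$, the equation $c=0$ becomes a quadratic in $J-iK$ with discriminant $1$ and roots the integers $2\alpha-2$, $2\alpha-3$. Since $J-iK\notin\mathbb{Z}$, $c\neq 0$ and hence $w\neq 0$. The main obstacle is exactly this non-cancellation check: four a priori independent contributions appear, and only the integrality of both roots of the resulting quadratic forces them not to sum to zero under the given hypothesis.
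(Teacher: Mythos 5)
Your proof is correct and follows essentially the same route as the paper: expand $e_{-n}(v)$ for large $n$ by $x_i$-powers, push it through the two shifted operators using the decomposition $e_{-i}=e_{-i,0}+e_{-i,1}+x_ie_{-i,2}$ from Lemma~\ref{degi+}, and detect non-vanishing via the coefficient of the fresh monomial $x_i^{-1}y_{n-j_0}\partial x_{j_0}(h)$. Your total coefficient $A_{\alpha,1}A_{\alpha+1,2}-2A_{\alpha+1,2}+4$ is exactly the paper's $C_\alpha=2(2-2\alpha+J-iK)(3-2\alpha+J-iK)$, with the same integer roots $2\alpha-2$, $2\alpha-3$ forcing $C_\alpha\neq 0$ when $J-iK\notin\mathbb{Z}$.
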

\begin{proof} Let us first consider the case $ h({\bf x}, {\bf y})  =x_{i_1}^{\alpha_1}\dots x_{i_k}^{\alpha_k} g({\bf y})$, i.e. $h$ is a monomial in ${\bf x}$ and polynomial in ${\bf y}$. Then one can verify that for $n> \max \{i,i_1,...,i_k \}$,
\begin{eqnarray*}e_{-n}(x_i^{-1}h)& =&  -2 x_i^{-3}x_{2i-n}h + 2 x_i^{-2}\left( \sum_{j=1}^k\alpha_jx_{i+i_j-n}x_{i_j}^{-1}h-y_{n-i}h\right)\\
&& + x_i^{-1} \left( \sum_{j=1}^k\alpha_jx_{i_j}^{-1}y_{n-i_j}h-\sum_{j,t=1}^k\alpha_{i_{j},i_{t}}x_{i_{j}}^{-1}x_{i_{t}}^{-1}x_{i_{j}+i_{t}-n}h \right)
\end{eqnarray*}
where $\alpha_{i_{j},i_{t}}=\alpha_{i_{j}} (\alpha_{i_{t}} - \delta_{j,t})$. Set for simplicity  
$$
\widetilde{e}_n = \left( e_{-i}f_i - A_{\alpha, 1}\right) \left( e_{-i}f_i - A_{\alpha+1, 2}\right) e_{-n}.
$$ Using the above formula one can compute that for every $j = 1,...,k$ and sufficiently large $n$ we have
\begin{equation*}
w = \widetilde{e}_n v = \alpha_jC_{\alpha} x_i^{-1}x_{i_j}^{-1}y_{n-i_j}h + g_{i_j},
\end{equation*}
where 
$$
C_{\alpha} = A_{\alpha,1}A_{\alpha+1,2}-2A_{\alpha+1,2}+4 = 2 (2-2\alpha+J-iK)(3-2\alpha+J-iK)
$$
and $\partial y_{n-i_j} g_{i_j} = 0$. On the other hand $\partial y_{n-r} (\widetilde{e}_n (x_i^{-1}h)) = 0$ if $r \neq i_j$, $j=1,...,k$.  Therefore for every $i'\neq i$,
\begin{equation}\label{v-w}
\partial y_{n-i'} (\widetilde{e}_n (x_i^{-1}h)  - C_{\alpha} x_i^{-1}y_{n-i'}\partial x_{i'} h) = 0.
\end{equation}
Since $J-iK \notin\mathbb Z$ we have  $C_{\alpha} \neq 0$. Also $\partial x_{i_{j}} h \neq 0$ for all $j = 1, \ldots, k$.  We conclude 
that $w \neq 0$.

Consider now an arbitrary polynomial $h({\bf x}, {\bf y})$ and fix $i'$ such that $\partial x_{i'} h \neq 0$. Let $h = h_1+\cdots + h_t$, where each $h_i$ is a monomial in ${\bf x}$. Applying (\ref{v-w}) to each $h_s$, and then taking the sum over $s = 1, \ldots, t$ one easily shows that 
$$
w = C_{\alpha}  x_i^{-1} y_{n-i'} \partial x_{i'}(h) + h'
$$
for some $ h'$ such that  $\partial y_{n-i'} (h') = 0$. Hence $w\neq 0$. The inequality  $ \deg_{x^{+}} w< \deg_{x^{+}}v$ follows from Lemma \ref{degi+}.
\end{proof}

\begin{lemma}\label{h0D}
Let $v \in M_{i,S}$. Then there is $u \in U$ such that $u(v)$ is a nonzero $\deg_x$-homogeneous element in $M_{i,S}$.
\end{lemma}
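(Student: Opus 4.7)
The plan is to exploit the Cartan element $h_{0}$, which acts diagonally on the monomial basis of $M_{i,S}$ and, by the FFFR formula $h_{0}\mapsto -2\sum_{m\in\Z}x_{m}\partial x_{m}+J$, multiplies every monomial of $\deg_{x}$-degree $n$ by the scalar $J-2n$. Thus $\deg_{x}$-homogeneity of a vector in $M_{i,S}$ coincides with $h_{0}$-homogeneity, and any single $\deg_{x}$-component can be isolated by a polynomial in $h_{0}$.

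First, I would decompose $v=v_{n_{1}}+\cdots +v_{n_{r}}$ into its $\deg_{x}$-homogeneous components, where $n_{1},\dots ,n_{r}$ are the pairwise distinct $\deg_{x}$-values occurring among the monomials appearing in the expansion of $v$. Since $v$ involves only finitely many monomials, this decomposition is finite, and $v\neq 0$ guarantees at least one $v_{n_{j}}\neq 0$. By the observation above, $h_{0} v_{n_{j}}=(J-2n_{j})v_{n_{j}}$, so the $v_{n_{j}}$ are $h_{0}$-eigenvectors with pairwise distinct eigenvalues.

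Finally, choosing any $j_{0}$ with $v_{n_{j_{0}}}\neq 0$, the Lagrange interpolation polynomial
$$
u=\prod_{j\neq j_{0}}\frac{h_{0}-(J-2n_{j})}{2(n_{j_{0}}-n_{j})}\in U(\mathfrak{h})\subset U
$$
satisfies $u(v)=v_{n_{j_{0}}}$, yielding the required nonzero $\deg_{x}$-homogeneous vector. No step is a genuine obstacle: the action of $h_{0}$ on monomials is a direct substitution into the FFFR formula, and the extraction of a single weight component by a polynomial in a Cartan element is the standard Vandermonde argument.
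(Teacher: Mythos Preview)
Your proof is correct and follows essentially the same approach as the paper: both observe that $\deg_x$-homogeneity coincides with being an $h_0$-eigenvector (eigenvalue $J-2n$ for $\deg_x=n$) and then extract a single component by the standard Vandermonde/Lagrange argument using a polynomial in $h_0$. There is a harmless sign slip in your interpolation formula---the denominator should read $2(n_j-n_{j_0})$ to obtain exactly $v_{n_{j_0}}$---but this does not affect the conclusion.
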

\begin{proof}
The argument is standard but for a sake of completeness we provide a short proof. Note that $w \in M_{i,S}$ is $\deg_x$-homogeneous if and only if $h_0 (w) = \lambda w$ for some $\lambda \in {\mathbb C}$. Present $v$ as a sum $v = w_1+\dots+w_{t}$ of weight vectors, and let $h_0 (w_i) = \lambda_i w_i$, where $\lambda_1, \ldots, \lambda_t$ are pairwise distinct. Then for $k=0, \dots, t-1$ we have 
$$
h_0^k(v)=\lambda_1^kw_1+\dots+\lambda_t^k w_{ t}.
$$
Solving this linear system in $w_1, \ldots, w_t$ and using the fact that the corresponding Vandermonde determinant is nonzero  we find $u_i\in U$ such that $u_i(v)=w_i$ for all $i$.
\end{proof}

Now we are ready to prove Proposition  \ref{prop-main}\rm{(ii)}{\rm (b)}. 
After replacing $v$ by $f_i^N(v)$ for some $N\geq 0$ if necessary, we may assume that $v = x_i^{-1} g({\bf x}, {\bf y})$ for a polynomial $g$ such that $\partial x_i (g) = 0$. Furthermore, using  Lemma \ref{h0D}, we may assume that $g$ is $\deg_{x}$-homogeneous, hence $\deg_{x^+}$-homogeneous. We continue by induction on $\deg_{x^+}(v) = \deg_{x^+}(g) $.  The base case  $\deg_x(v)=0$ follows from Lemma \ref{deg0}. Let now $\deg_x(v) = \alpha \geq 1$ and fix $j \in {\mathbb Z}$ such that $\partial x_j (v) \neq 0$. Then by Lemma \ref{e-n-i} we find $n>>0$ such that $\widetilde{e}_{-n}v \neq 0$ in $M_{i,S}$, where $\widetilde{e}_n  = \left( e_{-i}f_i - A_{\alpha, 1}\right) \left( e_{-i}f_i - A_{\alpha+1, 2}\right) e_{-n}$. Using the induction hypothesis and the fact that $\deg_x(\widetilde{e}_{-n} (v)) <  \deg_x(v) $ we complete the proof.

\subsection{The case $K=0$} 
In this section we prove the necessary an sufficient condition for $M_i = \Phi'(i)$ to be irreducible, see Theorem \ref{main}(ii). By Corollary \ref{m-i-red}, we know that $M_i$ is reducible when $J \in {\mathbb Z}$. So, to prove Theorem \ref{main}(ii), it is sufficient to prove the following proposition. 

\begin{prop} \label{prop-main-2}Let $K= 0$ and $J \notin {\mathbb Z}$. Then we have the following.
\begin{enumerate}
\item[(i)] $U(x_i^{-1})=M_{i}$.

\item[(ii)] Let $v\in M_{i,S}$ be nonzero. Then there exists $u\in U$ such that $u(v)=x_i^{-1}$.
\end{enumerate}
\end{prop}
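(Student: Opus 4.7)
My strategy is to mirror the argument for Proposition~\ref{prop-main}(ii) given in Sections~\ref{sub-1}--\ref{sub-2}, adapting it to the $K=0$, $M_i$ setting where there are no $y$-variables. The crucial new identity available here is that $e_{-i}f_i(x_i^{-1}Z)=0$ in $M_i$ for any $Z$ with $\partial x_i Z=0$: combining the $K=0$ analogue of Lemma~\ref{degi+} with $e_{-i}f_i=f_ie_{-i}+h_0$ one gets $e_{-i}f_i(x_i^{-1}Z)=(A_{\alpha_Z,1}+2-2\alpha_Z+J)x_i^{-1}Z + Z'+x_iZ''$ in $N_i$; the first scalar vanishes identically, while $Z'$ and $x_iZ''$ lie in $I_i$.

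For part~(i), a direct computation from $\Phi'$ yields
\[
e_{-i}(x_i^{-p})=-p(p+1+J)x_i^{-p-1},\qquad p\ge 1.
\]
Since $J\notin\Z$, the scalar $p+1+J$ is nonzero, so iterating a rescaled $e_{-i}$ carries $x_i^{-1}$ to $x_i^{-p}$ for any $p\ge 1$. As $f_{i_s}$ acts on $M_i$ as multiplication by $x_{i_s}$, subsequent applications of $f_{i_1}^{\alpha_1}\cdots f_{i_k}^{\alpha_k}$ produce the general monomial $x_i^{-p}x_{i_1}^{\alpha_1}\cdots x_{i_k}^{\alpha_k}$, and such monomials span $M_i$, giving $U(x_i^{-1})=M_i$.

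For part~(ii), I proceed by induction on $\alpha:=\deg_{x^+}(v)$. Applying $f_i^N$ for suitable $N\ge 0$ and using the Vandermonde argument of Lemma~\ref{h0D}, one reduces to the case $v=x_i^{-1}g$ with $g\in\C[\widehat{\bf x}_i]$ a nonzero $\deg_{x^+}$-homogeneous polynomial of degree $\alpha$; the base case $\alpha=0$ gives $v=cx_i^{-1}$ and $u=c^{-1}$ works. For $\alpha\ge 1$, I fix $i_s\ne i$ with $\partial x_{i_s}g\ne 0$ and apply
\[
\widetilde e_{i_s} := \bigl(e_{-i}f_i-A_{\alpha,1}\bigr)\bigl(e_{-i}f_i-A_{\alpha+1,2}\bigr)\,e_{-i_s}.
\]
Computing $e_{-i_s}(v)$ along the lines of Lemma~\ref{e-n-i} (with $n$ replaced by the existing index $i_s$) and decomposing by $\deg_{x^+}$ into pieces $w_{\alpha+1}+w_\alpha+w_{\alpha-1}$, the coincidence $x_{i+i_s-i_s}=x_i$ collapses one intermediate $p=2$ contribution into the $p=1$ layer, which together with the $J\partial x_{i_s}$-term in $e_{-i_s}$ forces $w_{\alpha-1}$ to contain a nonzero multiple of $x_i^{-1}\partial x_{i_s}(g)$, with scalar $J+k$ for an integer $k$ depending only on the monomial structure of $g$. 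The two factors in $\widetilde e_{i_s}$ then annihilate $w_{\alpha+1}$ and $w_\alpha$ modulo lower-$\deg_{x^+}$ corrections, while acting on $w_{\alpha-1}$ through the scalar $A_{\alpha,1}A_{\alpha+1,2}=2(2\alpha-2-J)(2\alpha-1-J)$, and applying the induction hypothesis to $\widetilde e_{i_s}(v)$ closes the argument.

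The main obstacle is verifying that $\widetilde e_{i_s}(v)\ne 0$ in $M_i$: one must check that the main term $A_{\alpha,1}A_{\alpha+1,2}(J+k)x_i^{-1}\partial x_{i_s}(g)$ is not cancelled by the $p=1$, $\deg_{x^+}=\alpha-1$ correction terms coming from the action of $e_{-i}f_i$ on $w_{\alpha+1}$ and $w_\alpha$. In the $K\ne 0$ case of Lemma~\ref{e-n-i} this was handled by extracting a specific $y_{n-i'}$-coefficient, a witness no longer available here; I expect the replacement to be a careful bookkeeping showing that the total coefficient of $x_i^{-1}\partial x_{i_s}(g)$ in $\widetilde e_{i_s}(v)$ is a polynomial in $J$ all of whose roots are integers, hence nonzero for $J\notin\Z$. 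A subtle feature is that the $K\ne 0$ choice of $n\gg 0$ fails in this setting (for instance, when $g$ is linear, a direct calculation shows $\widetilde e_n(v)=0$ for $n$ very large), which is precisely why one picks $n=i_s$ inside the support of $g$.
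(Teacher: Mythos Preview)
Your treatment of part~(i) is correct and identical to the paper's: one iterates $e_{-i}$ (rescaled) on $x_i^{-1}$ and then applies $f_{i_1}^{\alpha_1}\cdots f_{i_k}^{\alpha_k}$.

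For part~(ii), your overall scheme (reduce to $v=x_i^{-1}g$ with $g$ $\deg_{x^+}$-homogeneous via $f_i^N$ and the Vandermonde lemma, then induct on $\alpha=\deg_{x^+}$, using operators of the form $\widetilde e_s=(e_{-i}f_i-A_{\alpha,1})(e_{-i}f_i-A_{\alpha+1,2})e_{-s}$) matches the paper's. The gap is exactly where you flag it: you have not proved that $\widetilde e_{i_s}(v)\neq 0$. Your proposed fix --- showing that the total coefficient of $x_i^{-1}\partial x_{i_s}(g)$ in $\widetilde e_{i_s}(v)$ is a polynomial in $J$ with only integer roots --- is speculative, and in fact $\widetilde e_{i_s}(v)$ is not in general a scalar multiple of $x_i^{-1}\partial x_{i_s}(g)$: the third displayed sum in the paper's formula for $v_s$ contributes other monomials of the same $\deg_{x^+}$, and further correction terms filter down from the $x_i^{-3}$ and $x_i^{-2}$ parts after applying the two factors $(e_{-i}f_i-A)$. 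So the ``careful bookkeeping'' you anticipate is genuinely nontrivial, and as written the argument is incomplete.

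The paper sidesteps this computation by a dichotomy. For $s\notin\{i_1,\ldots,i_k\}$ one has $e_{-s}(v)=v_s$ (no $J\partial x_s$ contribution), and the expression $v_s$ is built from the \emph{same} numerical coefficients ($-2$, $2\alpha_m$, $\alpha_{m_1}(\alpha_{m_2}-\delta_{m_1,m_2})$) for every $s$; only the index of the ``new'' variable $x_{(\cdot)-s}$ shifts. Thus either $\widetilde e_s(v)\neq 0$ for some $s$ outside the support and the induction proceeds, or $\widetilde e_s(v)=0$ for all such $s$, which (by the $s$-independence of the coefficients) forces the $v_s$-part to contribute nothing for \emph{every} $s$. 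In that second case $\widetilde e_{i_1}(v)$ reduces to the action of $(e_{-i}f_i-A_{\alpha,1})(e_{-i}f_i-A_{\alpha+1,2})$ on the single extra term $\alpha_1 J\, x_{i_1}^{-1}v$; using your own observation that $e_{-i}f_i$ annihilates $x_i^{-1}Z$ in $M_i$, this equals $A_{\alpha,1}A_{\alpha+1,2}\alpha_1 J\, x_{i_1}^{-1}v$, which is nonzero since $J\notin\Z$. This dichotomy is the missing idea: rather than computing $\widetilde e_{i_s}(v)$ directly, one shows that if all the ``generic'' $\widetilde e_s$ vanish then the ``special'' one simplifies dramatically.
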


We start with the following lemma which implies Proposition \ref{prop-main-2}\rm{(i)}.
\begin{lemma}
Let $v\in M_i$ be a nonzero element such that $\deg_{x^+}(v)=0$. Then there exists $u_1\in U$ such that $u_1(v)=x_i^{-1}$. Moreover, $U(x_i^{-1})=M_i$.
\end{lemma}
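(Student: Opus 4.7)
The plan is to handle the two statements in order. For the first, observe that under $\Phi'$ the element $f_i$ acts as multiplication by $x_i$ on $M_i$. By hypothesis $\deg_{x^+}(v)=0$, so we may write $v=\sum_{p=1}^{p_1} c_p x_i^{-p}$ with $c_{p_1}\neq 0$. Since every non-negative power of $x_i$ vanishes in $M_i$, applying $f_i^{p_1-1}$ kills every term with $p<p_1$ and sends $x_i^{-p_1}$ to $x_i^{-1}$; thus $u_1=c_{p_1}^{-1}f_i^{p_1-1}$ is the required element of $U$.

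For the ``moreover'' part, I would show that every monomial of the form $x_i^{-p}\,x_{i_1}^{\alpha_1}\cdots x_{i_k}^{\alpha_k}$ with $p\geq 1$, $i_j\neq i$, and $\alpha_j\geq 0$ lies in $U(x_i^{-1})$. Since such monomials span $M_i$ and $U(x_i^{-1})$ is a submodule, this yields $U(x_i^{-1})=M_i$. A direct calculation from the formula for $\Phi'(e_{-i})$ shows that the only nonzero contribution to $e_{-i}(x_i^{-p})$ from the double sum occurs at $k=m=i$; combining this with the $J\partial x_i$ summand gives
$$
e_{-i}(x_i^{-p}) \;=\; -p\,(p+1+J)\,x_i^{-p-1}.
$$
Because $J\notin\Z$, the factor $p+1+J$ is nonzero for every $p\geq 1$, and hence $e_{-i}^{\,p-1}(x_i^{-1})$ is a nonzero scalar multiple of $x_i^{-p}$.

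To finish, note that for each $i_j\neq i$ the operator $f_{i_j}$ acts as multiplication by $x_{i_j}$, and such multiplications commute with multiplication by $x_i^{-p}$ and with each other. Therefore $f_{i_1}^{\alpha_1}\cdots f_{i_k}^{\alpha_k}\,e_{-i}^{\,p-1}(x_i^{-1})$ is a nonzero multiple of $x_i^{-p}\,x_{i_1}^{\alpha_1}\cdots x_{i_k}^{\alpha_k}$, exhibiting every monomial basis element of $M_i$ inside $U(x_i^{-1})$.

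The main (and essentially only) subtle step is the verification that $e_{-i}(x_i^{-p})$ is a nonzero multiple of $x_i^{-p-1}$; once this is in hand, the rest is linear algebra. The argument is particularly clean here because we are working with $\Phi'$ rather than $\Phi$, so no ${\bf y}$-variables appear (the factor $\C[{\bf y}]$ has been quotiented out in passing from $K\ne 0$ to the $K=0$ situation). The hypothesis $J\notin\Z$ enters at exactly one point, namely to guarantee $p+1+J\neq 0$ for all positive integers $p$, which is what allows $e_{-i}$ to move freely between the vectors $x_i^{-p}$.
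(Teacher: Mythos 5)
Your proposal is correct and follows essentially the same route as the paper: the identity $e_{-i}(x_i^{-p})=-p(p+1+J)\,x_i^{-p-1}$ (with $J\notin\Z$ guaranteeing the coefficient is nonzero), powers of $f_i$ acting as multiplication by $x_i$ to reach $x_i^{-1}$, and the operators $f_{i_j}$ acting as multiplication by $x_{i_j}$ to produce all spanning monomials of $M_i$. The only difference is that you treat a general $\deg_{x^+}=0$ element as a linear combination $\sum_p c_p x_i^{-p}$ and extract the top term, which is a slightly more careful rendering of the paper's monomial computation.
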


\begin{proof}Fix $p\in\mathbb N$. For $v=x_i^{-p}$ define $u_1=f_i^{p-1}$.  We have
\begin{equation*}
\dfrac{-1}{p(p+1+J)}e_{-i}(x_i^{-p})=x_i^{-p-1}.
\end{equation*}
The above  identity together with $\Big(\Pi_{s=1}^kf_{i_s}^{\alpha_s}\Big)(x_i^{-p})=x_i^{-p}x_{i_1}^{\alpha_1}\dots x_{i_n}^{\alpha_n}$ imply $U(x_i^{-1})=M_i$.
\end{proof}

Proposition \ref{prop-main-2}\rm{(ii)} follows from Lemma \ref{h0D} (applied for $S = \emptyset$) and the following lemma.
\begin{lemma}
For $v\in M_i$ such that $\deg_{x^+}(v)>0$ there exists $u\in U$ with $u(v)=v'\neq0$ and $\deg_{x^+}(v')< \deg_{x^+}(v)$.
\end{lemma}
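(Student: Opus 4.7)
The plan is to adapt the proof of \lemref{e-n-i} to the $K=0$ setting where no $\bf y$-variables are available as fresh indicators. First I reduce: using $f_i$ (which acts as multiplication by $x_i$ in $\Phi'$) to shift the largest negative $x_i$-power up to $-1$, and applying the $M_i$-analogue of \lemref{h0D} (whose proof transfers unchanged, since it relies only on $h_0$ acting with distinct eigenvalues on distinct $\deg_x$-homogeneous components), we may assume $v = x_i^{-1}h$ with $h \in \mathbb{C}[\widehat{\bf x}_i]$ a $\deg_{x^+}$-homogeneous polynomial of degree $\alpha = \deg_{x^+}(v) \geq 1$. Now choose $n$ strictly greater than every index appearing in $h$, so that $\partial x_n h = 0$ and the indices $2i-n$, $k+i-n$, $k+m-n$ (for $k,m \in \supp h$) all lie below $\min \supp h$ and differ from $i$. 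Since $K=0$, the formula $\Phi'(e_{-n}) = -\sum_{k,m}x_{k+m-n}\partial x_k\partial x_m + J\partial x_n$ has no $y$-terms and the $J$-term vanishes on $x_i^{-1}h$. A direct expansion gives
\[
e_{-n}(x_i^{-1}h) = x_i^{-3}A + x_i^{-2}B + x_i^{-1}C \quad \text{in } M_i,
\]
where $A = -2x_{2i-n}h$, $B = 2\sum_{k\neq i}x_{k+i-n}\partial x_k h$, and $C$ is the appropriate combination of $x_{k+m-n}\partial x_k\partial x_m h$ terms (with $x_i^{\ge 0}$-contributions dropped as they vanish in $M_i$). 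The three pieces are $\deg_{x^+}$-homogeneous of degrees $\alpha + 1$, $\alpha$, $\alpha - 1$, respectively.

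Next I apply the composition
\[
\widetilde{e}_{-n} := \bigl(f_ie_{-i} - A_{\alpha,2}\bigr)\bigl(f_ie_{-i} - A_{\alpha+1,3}\bigr)\,e_{-n},
\]
with $A_{\beta, p} = -p(p+1-2\beta+J)$, the $K=0$ specialization of \defref{defi}. \lemref{degi+} applies verbatim in $M_i$ (the $y$-parts of the decomposition $e_{-i} = e_{-i,0} + e_{-i,1} + x_ie_{-i,2}$ simply vanish when $K=0$), and shows that $f_ie_{-i}$ acts on a $\deg_{x^+}$-homogeneous element $x_i^{-p}Z$ of $\deg_{x^+}Z = \beta$ as multiplication by $A_{\beta,p}$ modulo terms of strictly lower $x_i^{-1}$-power. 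A direct bookkeeping then shows $\widetilde{e}_{-n}(x_i^{-1}h) = x_i^{-1}D$ with $\deg_{x^+}D \leq \alpha - 1$; the hypothesis $J \notin \mathbb{Z}$ is used here to ensure that the relevant $A_{\beta,p}$ and their pairwise differences are nonzero.

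The main obstacle is showing $\widetilde{e}_{-n}(x_i^{-1}h) \neq 0$ in $M_i$, since the fresh $y_{n-i_j}$ indicator of \lemref{e-n-i} is unavailable. The resolution is to use the fresh low-index $x$-variable $x_{i'+i-n}$ (for a fixed $i' \in \supp h$ with $\partial x_{i'}h \neq 0$) and to track the coefficient of $x_i^{-1}x_{i'+i-n}\partial x_{i'}h$ through the composition. Although $e_{-i}$ can in principle both destroy and create occurrences of $x_{i'+i-n}$, for $n$ large the only creation mechanism requires pairing two indices summing to $i'+2i-n$, which forces one to be among the already-present low indices and pins the other down inside $\supp h$. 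A careful accounting shows that the various such contributions combine to give
\[
\widetilde{e}_{-n}(x_i^{-1}h) = c_\alpha \, x_i^{-1}x_{i'+i-n}\,\partial x_{i'}h + w',
\]
where $w'$ has no $x_{i'+i-n}$-dependence and $c_\alpha$ is a product of differences of $A_{\beta,p}$'s which is nonzero precisely because $J \notin \mathbb{Z}$. Since $\partial x_{i'}h \neq 0$ and $x_{i'+i-n} \notin \supp h$ (so no cancellation with $w'$ or within $\partial x_{i'}h$ is possible), the expression is nonzero in $M_i$. Taking $u = \widetilde{e}_{-n}$ and $v' = u(v)$, we obtain $0 \neq v' \in M_i$ with $\deg_{x^+}(v') \leq \alpha - 1 < \deg_{x^+}(v)$, completing the inductive step and hence the proof of \propref{prop-main-2}(ii) and \thmref{main}(ii).
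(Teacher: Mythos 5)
Your reduction steps (applying a power of $f_i$, the $M_i$-analogue of Lemma~\ref{h0D}, and a composition of the form $(f_ie_{-i}-A_{\alpha,2})(f_ie_{-i}-A_{\alpha+1,3})e_{-n}$, which is an equivalent reordering of the paper's $(e_{-i}f_i-A_{\alpha,1})(e_{-i}f_i-A_{\alpha+1,2})e_{-n}$) are fine, but the nonvanishing argument has a genuine gap, and your key claim is in fact false. First, an internal inconsistency: the alleged leading term $c_\alpha\, x_i^{-1}x_{i'+i-n}\partial x_{i'}h$ has $\deg_{x^+}$ equal to $1+(\alpha-1)=\alpha=\deg_{x^+}(v)$, because the marker $x_{i'+i-n}$ is an $x$-variable and, unlike the marker $y_{n-i_j}$ of Lemma~\ref{e-n-i}, contributes to $\deg_{x^+}$. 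This contradicts your own assertion that $\widetilde{e}_{-n}(x_i^{-1}h)=x_i^{-1}D$ with $\deg_{x^+}D\le\alpha-1$. And that assertion is the correct one: the two projection factors annihilate, modulo strictly lower degree, precisely the components of degrees $\alpha+1$ and $\alpha$ produced by $e_{-n}$, so no term of degree $\alpha$ survives; the marker term you track sits inside the degree-$\alpha$ component at level $x_i^{-2}$ and is therefore killed rather than propagated.

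Second, a concrete counterexample: take $h=x_{i_1}$, so $v=x_i^{-1}x_{i_1}$, $\alpha=1$, $i'=i_1$, $\partial x_{i_1}h=1\neq 0$. For large $n$ one has $e_{-n}(v)=-2x_i^{-3}x_{2i-n}x_{i_1}+2x_i^{-2}x_{i+i_1-n}$ in $M_i$; a short computation with $A_{1,2}=-2(1+J)$ and $A_{2,3}=-3J$ gives $(f_ie_{-i}-A_{2,3})e_{-n}(v)=2J\,x_i^{-2}x_{i+i_1-n}$, which is then annihilated exactly by $(f_ie_{-i}-A_{1,2})$. Hence $\widetilde{e}_{-n}(v)=0$ for every large $n$, so no choice of $n$ outside the support of $h$ can work, and your claimed identity would force $c_\alpha=0$. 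What your argument never uses, and what is indispensable here, is the first-order term $J\partial x_s$ of $\Phi'(e_{-s})$ for $s$ \emph{inside} the support of $h$. The paper's proof runs a dichotomy: either $\widetilde{e}_{-s}(v)\neq 0$ for some $s\notin\{i_1,\dots,i_k\}$ (done, since that output automatically has degree $\le\alpha-1$), or else, since the coefficients of the second-order part $v_s$ of $e_{-s}(v)$ do not depend on $s$, one gets $v_s=0$ for all $s$, and then $\widetilde{e}_{-i_1}(v)$ reduces to a nonzero multiple of $J\alpha_1 x_{i_1}^{-1}v$, which has degree $\alpha-1$ and is nonzero because $J\notin\mathbb Z$. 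You need this dichotomy (or some other exploitation of the $J\partial x_s$ term); an $x$-variable marker cannot substitute for the $y$-variable marker of the $K\neq 0$ case.
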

\begin{proof} After applying a power of $f_i$ to $v$ if necessary, we first assume $v=x_i^{-1}x_{i_1}^{\alpha_1}\dots x_{i_k}^{\alpha_k}$.  For every $s \in {\mathbb Z}$ we have $e_{-s} (v) = v_s + \sum_{j=1}^k\delta_{s,i_j}\alpha_jJx_{s}^{-1} v$, where 
\begin{align*}
v_s&=-2x_i^{-3}x_{i_1}^{\alpha_1}\dots x_{i_k}^{\alpha_k}x_{2i-s}+2 x_i^{-2}\Big(\displaystyle\sum_{m=1}^k\alpha_mx_{i_m}^{-1}x_{i_1}^{\alpha_1}\dots x_{i_k}^{\alpha_k}x_{-s+i+i_m}\Big)\\
&+x_i^{-1} \left( \displaystyle\sum_{m_1,m_2=1}^k\alpha_{m_1}(\alpha_{m_2}-\delta_{m_1,m_2})x_{-s+i_{m_1}+i_{m_2}}x_{i_{m_1}}^{-1}x_{i_{m_2}}^{-1} x_{i_1}^{\alpha_1}\dots x_{i_k}^{\alpha_k} \right)
\end{align*}

Following the reasoning of the proof of Lemma \ref{e-n-i}, set $\widetilde{e}_s = \left( e_{-i}f_i - A_{\alpha, 1}\right) \left( e_{-i}f_i - A_{\alpha+1, 2}\right) e_{-s}$. Also, let  $\widetilde{v}_s = \left( e_{-i}f_i - A_{\alpha, 1}\right) \left( e_{-i}f_i - A_{\alpha+1, 2}\right) v_{s}$. Assume that $\widetilde{e}_s (v) =0$ for all integer $s$, $s \notin \{i_1,...,i_k\}$. But then $v_s = 0$ for all such $s$. Since the coefficients of $v_s$ do not depend on $s$, we have $v_s=0$ for all $s \in {\mathbb Z}$. Hence $\widetilde{e}_{i_1}(v) = A_{\alpha,1}A_{\alpha+1,2}\alpha_1J v$ is nonzero because  $J \notin {\mathbb Z}$ and $A_{\alpha,1}$ and $A_{\alpha+1,2}$ are nonzero.  Since $\widetilde{e}_{i_1}(v)$ has  strictly smaller $x^{+}$-degree than  $\deg {x^{+}} (v)$, we proved the statement for the monomial $v$.

For arbitrary polynomial $v = v_1+ \cdots + v_t$  with linearly independent $v_1,....,v_t$, we apply the same reasoning. First we assume that all monomials have $x_i$-degree $-1$. Then $v_s = (v_1)_s+\cdots + (v_t)_s$ has coefficients that do not depend on $s$. If we assume $\widetilde{e}_s (v) =0$ for all integer $s$, $s \notin \{i_1,...,i_k\}$, then we will have 
$$
\widetilde{e}_{i_1}(v) = A_{\alpha,1}A_{\alpha+1,2} J ( \alpha_1 (v_1)  v_1+\dots + \alpha_1 (v_t) v_t),
$$
where $\alpha_1 (v_j)$ stands for the $x_{i_1}$-degree of $v_j$. Now using that the monomials $v_1,...,v_t$ are linearly independent and that $A_{\alpha,1}A_{\alpha+1,2} J \neq 0$ we complete the proof. \end{proof}

\section{Twisted localization of imaginary Verma modules} \label{sec-im-ver}

 We apply the localization functors $\cD_g$ and $\cD_g^z$ defined in \S \ref{loc} and \S \ref{gencon} to imaginary Verma modules when $g = f_i$ (recall  that $\Phi(f_i) = x_i$). Denote for simplicity $\cD_i = \cD_{f_i}$ and $\cD_i^z = \cD_{f_i}^z$.

\subsection{The case of nonzero central charge}

Let $\lambda\in \mathfrak{h}^*$ be such that $\lambda(c)\neq 0$. We also assume for simplicity that $\lambda(d) =0$, see Remark \ref{rem-d}(i). Recall that by Theorem \ref{th-im-ver},  we can identify  the imaginary Verma module $M(\lambda)$ of highest weight $\lambda$ with $\C[\bf{x},\bf{y}]$ by setting $J = \lambda(h_0)$ and $K = \lambda(c)$. To describe the twisted localization of $M(\lambda)$ we introduce the space of twisted polynomials: for  $ z\in\mathbb C$, the space $x_i^{ z} \C[x_i^{{\pm1}},\bf{\widehat{x}_i},\bf{y}]$ is spanned by $x_i^{ z} p$, $p \in \C[x_i^{{\pm1}},\bf{\widehat{x}_i},\bf{y}]$. The action of ${\mathcal A} ({\bf x}, {\bf y})$ (and hence of $U = U(\widehat{\mathfrak{sl}}_2)$) on $x_i^{ z} \C[x_i^{{\pm1}},\bf{\widehat{x}_i},\bf{y}]$ is obvious. The next lemma  is straightforward.
\begin{lemma} \label{lem-loc}
(i) $\cD_i M(\lambda) \simeq \C[x_i^{{\pm1}},\bf{\widehat{x}_i},\bf{y}]$, and hence $\cD_i M(\lambda)/M(\lambda) \simeq M_{i, \emptyset}$.

(ii) $\cD_i^{z} M(\lambda)\simeq x_i^{ z}\C[x_i^{{\pm1}},\bf{\widehat{x}_i},\bf{y}]$.
\end{lemma}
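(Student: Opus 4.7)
The plan is to unpack definitions, since both statements are essentially bookkeeping. For (i), \thmref{th-im-ver} identifies $M(\lambda)$ with $\C[\mathbf{x},\mathbf{y}]$, and under $\Phi$ the element $f_i$ acts by multiplication by $x_i$. This is an injective operator on the polynomial ring, so by the general fact recalled in \S\ref{loc}, $M(\lambda)$ embeds into $\cD_i M(\lambda)$. I would then identify $\cD_i M(\lambda)$ with $\C[x_i^{\pm 1},\widehat{\mathbf{x}}_i,\mathbf{y}]$ by observing that the formulas in \S\ref{sec-fffr} for $\Phi(e_n)$, $\Phi(h_n)$, $\Phi(d)$, and $\Phi(c)$ all preserve this Laurent-polynomial ring; the only operator worth a comment is $\partial x_i$, which visibly extends to $x_i^{\pm 1}$. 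Taking cosets modulo $M(\lambda)=\C[\mathbf{x},\mathbf{y}]$ and comparing with the definition of $M_{i,\emptyset}$ from Section~\ref{sec-loc} (for $S=\emptyset$ one has $N_{i,\emptyset}=\C[x_i^{\pm 1},\widehat{\mathbf{x}}_i,\mathbf{y}]$ and $I_{i,\emptyset}=\C[\mathbf{x},\mathbf{y}]$) yields the isomorphism $\cD_i M(\lambda)/M(\lambda)\simeq M_{i,\emptyset}$.

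For (ii), by definition $\cD_i^z M(\lambda)=\Psi_{f_i}^z(\cD_i M(\lambda))$, which as a vector space is simply $\cD_i M(\lambda)$ equipped with the twisted action $u\cdot v^z=(\Theta_z(u)\cdot v)^z$ of \eqnref{theta}. Writing the formal element $v^{-z}$ as $x_i^z\cdot v$, in accordance with the convention of \S\ref{gencon}, identifies the underlying space with $x_i^z\,\C[x_i^{\pm 1},\widehat{\mathbf{x}}_i,\mathbf{y}]$. What remains is to check that the twisted $U$-action coincides with the natural action of the image $\Phi(U)\subset \mathcal{A}(\mathbf{x},\mathbf{y})$ on such twisted Laurent expressions, where $\partial x_i$ is extended by the formal Leibniz rule $\partial x_i(x_i^z f)=z\,x_i^{z-1} f + x_i^z\,\partial x_i(f)$.

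The only step requiring any actual computation is this compatibility check in (ii); the hard part is simply keeping coefficients and indices straight. It reduces to verifying
\begin{equation*}
\Phi(u)(x_i^z\cdot v)=x_i^z\cdot\bigl(\Phi(\Theta_z(u))(v)\bigr)
\end{equation*}
on each generator $u\in\{e_n,f_n,h_n,d,c\}$ of $U$ and $v\in\cD_i M(\lambda)$. Because $\ad f_i$ is locally nilpotent on $U$, the series defining $\Theta_z(u)$ truncates to a short sum for each generator (two terms for $h_n$, at most three for $e_n$, and a single term for $f_n$, $d$, $c$), and the resulting identities reduce to Leibniz-rule manipulations of $\partial x_i$ applied to $x_i^z\cdot v$. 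No conceptual obstruction arises---which is why the authors call the lemma straightforward---so I anticipate the only real obstacle is bookkeeping across the generator cases.
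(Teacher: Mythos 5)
Your argument is correct and is precisely the unpacking of definitions the authors intend: the paper offers no proof beyond declaring the lemma straightforward, and your two steps --- identifying $\cD_i M(\lambda)$ with the Laurent ring $\C[x_i^{\pm1},\widehat{\bf x}_i,{\bf y}]$ on which $f_i=x_i$ acts bijectively, then verifying $\Phi(u)(x_i^z\cdot v)=x_i^z\cdot\Phi(\Theta_z(u))(v)$ on generators via the truncated series \eqref{theta} --- are exactly the right content. One trivial correction to your bookkeeping: since $(\ad f_i)(d)=-i f_i$, the element $\Theta_z(d)=d-zi$ has two terms rather than one, just like $h_n$.
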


\begin{remark}
Note that $x_i^{ z}\C[x_i^{{\pm1}},\bf{\widehat{x}_i},\bf{y}]$ is both a $U$-module and a $\cD_iU$-module. We will make use of both considerations. For example, we have that for $u \in U$, $k \in \Z$, and $p \in x_i^{ z}\C[x_i^{{\pm1}},\bf{\widehat{x}_i},\bf{y}]$, $(uf_i^{k})p = u(x_i^kp)$.
\end{remark}

For any root $\gamma\in \Delta^{\mathsf {re}}$ choose  a nonzero element
$X_{\gamma}$  in the $\gamma$-root space. Denote by $\mathcal W$  the
category of weight modules of $\widehat{\mathfrak{sl}}_2$ and by $\mathcal W(\alpha)$ its full
subcategory consisting of modules such that $X_\alpha$ is locally
nilpotent and, for any $\gamma\in\Delta^{\mathsf {re}},\
\gamma\neq\alpha$, $X_\gamma$ acts injectively. More generally, for a subset $A$ of $\Delta^{\mathsf {re}}$, denote by $\mathcal W(A)$ the category of weight modules $M$  on which $X_\gamma$ is locally nilpotent for $\gamma \in A$ and injective for all other $\gamma\in\Delta^{\mathsf {re}}$. Note that $A = \emptyset$ is allowed, and the modules in $\mathcal W(\emptyset)$ are  torsion free because all real root elements act injectively.
As mentioned in the beginning of Section \ref{sec-prel}, a torsion free module  is dense, but not vice versa.

Our second main result is the following theorem.

\begin{thm}\label{main5} Let $i \in \Z$, $ z \in \C$, and $\lambda \in {\mathfrak h}^*$ be such that $\lambda (c) = K\neq 0$ and $\lambda(h_0) = J$.
\begin{itemize}
\item[(i)] Let  $ z\in\mathbb Z$. Then $\cD_i^ z M(\lambda) \simeq \cD_i M(\lambda)$ and  $\cD_i M(\lambda)/ M(\lambda)$ is  irreducible if and only if $J-iK \notin \mathbb Z$. In the latter case, $\cD_i M(\lambda)/ M(\lambda)$ is a dense module in $\mathcal W(- \beta + i \delta)$ all of whose weight multiplicities are infinite.

\item[(ii)] Let $ z \notin \Z$. Then $\cD_i^ z M(\lambda)$ is irreducible if and only if $z -J + iK \notin {\mathbb Z}$. In the latter case, $\cD_i^ z M(\lambda)$ is a dense torsion free module all of whose weight multiplicities are infinite. 

\item[(iii)] $\cD_{i_1}^{ z_1} M(\lambda)\simeq \cD_{i_2}^{ z_2} M(\lambda)$ if and only if $ z_1- z_2\in\mathbb Z$ and $ i_1=i_2$.
\end{itemize}
\end{thm}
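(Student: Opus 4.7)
The plan is to handle the three parts in order, leveraging Theorem~\ref{main}, Remark~\ref{non-int}, and Lemma~\ref{sup-lemma}.

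For part (i), the isomorphism $\cD_i^z M(\lambda) \simeq \cD_i M(\lambda)$ for $z \in \Z$ is immediate from the discussion following Lemma~\ref{lem-conj}: for integer $z$, the twist $\Psi_{f_i}^z$ is canonically isomorphic to the identity functor. By Lemma~\ref{lem-loc}(i), $\cD_i M(\lambda)/M(\lambda) \simeq M_{i,\emptyset} = \Phi(i,\emptyset)$, so Theorem~\ref{main}(i) gives irreducibility if and only if $J-iK \notin \Z$. For the category, note that passing to the quotient $\cD_i M(\lambda)/M(\lambda)$ forces $f_i$ to act locally nilpotently (since the elements of $M(\lambda)$ are exactly the $f_i$-finite vectors of $\cD_i M(\lambda)$), while the action of every other $f_n$ is already injective on $M(\lambda)$ and is preserved under localization and quotient by the submodule of torsion elements; injectivity of each $e_n$ can be verified directly on monomials using the explicit formula for $\Phi(e_n)$ and a leading-term argument in $\deg_{x^+}$. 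Density and infinite multiplicities follow by computing supports: the support of $M_{i,\emptyset}$ is $\lambda + \Z\beta + \Z\delta$ minus the support of $M(\lambda)$, which still fills a full coset of $Q$, and each weight space is infinite-dimensional because the representative monomial $x_i^{-p} y_{j_1}^{\beta_1}\cdots y_{j_l}^{\beta_l}$ in a given weight can be multiplied by arbitrarily large monomials in $\widehat{\bf x}_i$ without changing its weight.

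For part (ii), with $z \notin \Z$ the module $\cD_i^z M(\lambda) \simeq x_i^z \C[x_i^{\pm 1}, \widehat{\bf x}_i, {\bf y}]$ has no submodule of integer-power type, so one cannot reduce to $M_{i,\emptyset}$. Instead I would mirror the scheme of Section~5.2 directly on $\cD_i^z M(\lambda)$: first show that $U\cdot (x_i^z) = \cD_i^z M(\lambda)$ by applying the generalized version of Lemma~\ref{prop4.10} given in Remark~\ref{non-int}, which requires the denominator $p + J - iK$ to be nonintegral; here $p = -z$ and the condition becomes $z - J + iK \notin \Z$. Then, for every nonzero $v$, reduce $\deg_{x^+}(v)$ to $0$ using the analogue of Lemma~\ref{e-n-i} (the coefficients $A_{\alpha,p}$ remain nonzero precisely under the same condition), and conclude with the analogue of Lemma~\ref{deg0}. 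Conversely, if $z - J + iK \in \Z$, then by applying a suitable integer power of $f_i$ or $e_{-i}$ we are reduced to the situation of part (i), where the submodule coming from $M(\lambda)$ (or a translate of it) gives a proper nonzero submodule. For the torsion-free property: $f_i$ is invertible on $\cD_i^z M(\lambda)$ by construction; each other $f_n$ acts injectively on $x_i^z p$ because it acts injectively on $p \in \C[x_i^{\pm 1},\widehat{\bf x}_i,{\bf y}]$, and injectivity of the $e_n$'s follows from the irreducibility (a nonzero kernel would be a proper submodule). Infinite multiplicities follow from Lemma~\ref{sup-lemma} combined with the explicit infinite-dimensionality of weight spaces in $M(\lambda)$. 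The main obstacle here is verifying that the $\deg_{x^+}$-reduction argument of Lemma~\ref{e-n-i} genuinely survives the shift by $x_i^z$; the key point is that the operator identities in Lemma~\ref{degi+} hold verbatim with $p$ replaced by $-z$ and $\alpha+p+1$ replaced by $\alpha - z + 1$, and the hypothesis $z - J + iK \notin \Z$ ensures that neither $A_{\alpha,-z}$ nor the auxiliary constants $C_\alpha$ vanish.

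For part (iii), the ``if'' direction is clear: when $z_1 - z_2 \in \Z$ and $i_1 = i_2$, multiplication by $x_{i_1}^{z_1 - z_2}$ gives an isomorphism of $\cD_{i_1} U$-modules, compatible with the twists $\Psi_{f_{i_1}}^{z_j}$ by Lemma~\ref{lem-conj}(i). For the ``only if'' direction, suppose $\cD_{i_1}^{z_1} M(\lambda) \simeq \cD_{i_2}^{z_2} M(\lambda)$; then their supports must coincide. By Lemma~\ref{sup-lemma}, with $\alpha_k := -\beta + i_k\delta$, this gives
\begin{equation*}
z_1 \alpha_1 + \Z \alpha_1 + \mathrm{Supp}\, M(\lambda) \;=\; z_2 \alpha_2 + \Z \alpha_2 + \mathrm{Supp}\, M(\lambda).
\end{equation*}
Since $\mathrm{Supp}\, M(\lambda) = \lambda - \Nb_{\ge 0}\beta - \Nb_{\ge 0}\delta$ has a unique ``highest'' corner in each Weyl-chamber sense, projecting onto $\mathfrak h^*/(\C\alpha_1)$ forces $\alpha_1$ and $\alpha_2$ to be $\Q$-proportional, hence $i_1 = i_2$ (as $\beta$-coefficients of both are $-1$). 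With $i_1 = i_2$ the equality of supports reduces to $z_1 \alpha_1 \equiv z_2 \alpha_1 \pmod{\Z \alpha_1}$, i.e.\ $z_1 - z_2 \in \Z$. The delicate step in this paragraph is the support analysis, where one must argue that the $\Z\alpha_k$-cosets of the two supports can coincide only when the directions $\alpha_1$ and $\alpha_2$ agree; this is where the ``thinness'' of $\mathrm{Supp}\, M(\lambda)$ along the real-root direction $\beta$ is used.
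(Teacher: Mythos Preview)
Part (i) matches the paper closely; note only that the support of $M_{i,\emptyset}$ is the full coset $\lambda + Q$ (not the coset minus $\mathrm{Supp}\, M(\lambda)$), as the vector-space isomorphism $M_{i,\emptyset}\simeq x_i^{-1}\C[x_i^{-1},\widehat{\bf x}_i,{\bf y}]$ shows. The paper handles the $\mathcal W(-\beta+i\delta)$ claim more efficiently than your leading-term argument: once irreducibility is known, the $X_\gamma$-locally-nilpotent vectors form a submodule, so it suffices to test each real root vector on the single element $x_i^{-1}$.

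For part (ii) your irreducibility strategy (re-running \S\ref{sub-1}--\ref{sub-2} with $p$ shifted to $p-z$) differs from the paper's. The paper instead reduces, via repeated application of $f_ie_{-i}-A_{\alpha,-z}$, to a vector $v=x_i^z p$ satisfying $f_ie_{-i}v=Av$; it then proves $e_{-i}^k v = C_k f_i^{-k}v$ with $C_k\neq 0$ and uses the conjugation $\Theta_z$ together with the already-established irreducibility of $\C[{\bf x},{\bf y}]$ to produce $u\in U$ with $u(v)=x_i^z$. Your approach is plausible but requires reverifying the analogue of Lemma~\ref{e-n-i}; the paper's method bootstraps from the untwisted case and avoids that. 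More seriously, your reducibility argument has a genuine gap: when $z\notin\Z$ you cannot ``reduce to the situation of part (i)'' by applying integer powers of $f_i$ or $e_{-i}$, since the twist parameter $z$ remains noninteger throughout. The paper instead exhibits an explicit $e_{-i}$-primitive vector $x_i^{z-N+1}g({\bf y})$, where $N=z-J+iK\in\Z$ and $g\in\C[{\bf y}]$, and invokes the submodule of $e_{-i}$-locally-nilpotent vectors. Your torsion-freeness argument also needs repair: $\ker e_n$ is not a submodule; the correct statement is that the $e_n$-locally-nilpotent vectors form a submodule, and one then checks that $e_n$ is not locally nilpotent on $x_i^z$.

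For part (iii), your description of $\mathrm{Supp}\, M(\lambda)$ is incorrect: since $f_n$ is available for every $n\in\Z$, the support contains $\lambda - m\beta + n\delta$ for all $m\ge 1$ and all $n\in\Z$, so there is no ``highest corner'' in the $\delta$-direction and the ``thinness'' heuristic fails. In fact $\mathrm{Supp}\, M(\lambda)+\Z\alpha_k=\lambda+Q$ for every $k$, and the support equality reduces to $z_1\alpha_1 - z_2\alpha_2\in Q$; your $\Q$-proportionality step does not follow from this, and you should re-examine how $i_1=i_2$ is actually forced.
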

\begin{proof}
(i) Recall that by Lemma \ref{lem-loc}(i), $\cD_i M(\lambda)/ M(\lambda) \simeq  M_{i,\emptyset}$. The necessary and sufficient condition for the simplicity of  $\cD_i M(\lambda)/ M(\lambda)$  follows from Theorem \ref{main}(i). The density and the infiniteness of the multiplicities follow from the vector space isomorphism 
$$\cD_i M(\lambda)/ M(\lambda) \simeq x_i^{-1}\mathbb C[x_i^{-1},\widehat{{\bf x_i}},{\bf y}].$$
 In order to check which root elements act locally nilpotently on the irreducible $U$-module $\cD_i M(\lambda)/ M(\lambda)$, it is sufficient to check which $e_{\alpha}$ and $f_{\alpha}$ act locally nilpotent on $x_i^{-1}$. The latter is an easy computation and we see that there is exactly one such root element, namely,  $e_{\alpha}$ for $\alpha = - \beta + i \delta$.

(ii) We first note that if  $N= z -J + iK$ is in ${\mathbb Z}$, then $e_{-i} (x_i^{z - N + 1}g (y)) = 0$ for any element $g$ in $\mathbb C[{\bf y}]$ (i.e. $\deg_x g = 0$). In particular, the $e_{-i}$--locally nilpotent elements in $\cD_i^z M(\lambda)$ form a nontrivial submodule, and hence  $\cD_i^z M(\lambda)$ is not irreducible. Assume now that $z -J + iK \notin {\mathbb Z}$. We first show that $U(x_i^{z}) = \cD_i^z M(\lambda)$. This follows from multiple applications of the nonintegral version of Lemma \ref{prop4.10} (see Remark \ref{non-int}). To show that $\cD_i^z M(\lambda)$ is irreducible, it is sufficient to verify that for every $v = x_i^{z} p$, $p \in \C[x_i^{{\pm1}},\bf{\widehat{x}_i},\bf{y}]$, there is $u \in U$ such that $u(v) = x_i^{z}$. We prove this statement  in three steps - we first reduce the statement to the case when $p$ is $\deg_x$-homogeneous in ${\mathbb C} [{\bf x}, {\bf y}]$, second we find such $u$ in the case 
\begin{equation} \label{f-e}
f_ie_{-i} v = Av \mbox{ for some }A \in {\mathbb C},
\end{equation}
and, lastly, we find $u' \in U$ such that $v'=u'(v)$ has the property (\ref{f-e}).

The first step follows from Lemma \ref{h0D} and, if necessary, applying a change of parameter $ z \mapsto z+N$ for a sufficiently large integer $N$. Assume now that $v$ has the property (\ref{f-e}). Without loss of generality, we may assume that $p = \sum_{j = 0}^t x_{i}^jp_j$, where $\partial x_i (p_j) = 0$, $\deg_{x}{p_0} = \deg_{x}{p} =   \alpha$. Using direct computations we find that $A = A_{\alpha, - z}$. Hence $e_{-i}(v) = A_{\alpha, -z} f_i^{-1} v$. Then using induction and the fact that $h_0 (v) = (-2\alpha - 2z + J)v$ and $f_ie_{-i}f_i^{-1} = e_{-i} - (h_0 -iK)f_i^{-1}$, we prove that for every $k \geq 1$,
\begin{equation} \label{e-i-k}
e_{-i}^k(v) = A_{\alpha, -z} A_{\alpha, -z+1}\cdots  A_{\alpha, -z+k-1}f_i^{-k} v.
\end{equation}
By the simplicity of the $U$-module ${\mathbb C} [{\bf x}, {\bf y}]$, we know that there is $u_1 \in U$ such that $u_1 (p) =1$ in ${\mathbb C} [{\bf x}, {\bf y}]$. Recall the definition of $\Theta_z$ in \S \ref{gencon}. Fix $K\geq 0$ such that $\Theta_z (u_1) = u_2 f_i^{-K}$ for some $u_2 \in U$. Then 
using Lemma \ref{lem-conj} and (\ref{e-i-k}), we have 
$$x_i^z = x_i^z u_1(p) = \Theta_z(u_1) (x_i^z p)= u_2 f_i^{-K} (x_i^z p) = \frac{1}{C}u_2e_{-i}^k
 (x_i^z p),$$
where $C = A_{\alpha, -z} A_{\alpha, -z+1}\cdots  A_{\alpha, -z+k-1}$ is nonzero because $z \notin {\mathbb Z}$ and $z-J+iK \notin {\mathbb Z}$. This completes the second step. For the last step take $v' = (f_ie_{-i} - A_{\alpha, - z})v$,  $\alpha = \deg_x p$, and assume $v' = 0$. We note that $v' = x_i^z p'$, where $\alpha' = \deg_x p' < \deg_x p$. If $v'$ does not satisfy (\ref{f-e}) we apply $f_ie_{-i} - A_{\alpha', - z}$ and so on until we obtain an element $v'' = x_i^{z+n} p''$, $n \in {\mathbb Z}$, with the property $\deg_x p'' = 0$. But then $(f_ie_{-i} - A_{-z-n, 0})v'' = 0$,  and hence $v''$ satisfies (\ref{f-e}).

(iii) The ``if'' part is trivial. Assume $\cD_{i_1}^{ z_1} M(\lambda)\simeq \cD_{i_2}^{ z_2} M(\lambda)$. From Lemma \ref{sup-lemma}, we have that $\lambda + {\mathbb Z} (-\beta + i_1 \delta) + z_1 {\mathbb Z} = \lambda + {\mathbb Z} (-\beta + i_2 \delta) + z_2 {\mathbb Z}$. This identity easily implies $i_1 = i_2$ and $z_1 - z_2 \in {\mathbb Z}$. \end{proof}

\begin{remark}
Theorem \ref{main5} can be  stated  in more general setting if we consider ${\mathcal A} ({\bf x}, {\bf y})$-modules. Namely, in the statement of the theorem we may replace $\cD_i M(\lambda)/ M(\lambda)$ and $\cD_i^ z M(\lambda)$  by the ${\mathcal A} ({\bf x}, {\bf y})$-modules $\cD_i M_S/M_S \simeq M_{i, S}$ and $\cD_i^ z M_S$, respectively (in the case $S = \emptyset$ the corresponding modules coincide). Note that  the functor $\cD_i$ on ${\mathcal A} ({\bf x}, {\bf y})$-modules corresponds to the localization with respect to the multiplicative subset $\{ x_i^n \; | \; n \in {\mathbb Z}_{\geq 0}\}$ of  ${\mathcal A} ({\bf x}, {\bf y})$. 
\end{remark}

\subsection{The case of zero central charge}

Let $\lambda\in \mathfrak{h}^*$. In this subsection we discuss the case of  trivial central
charge, that is $\lambda(c)=0$. In this case the imaginary Verma module $M(\lambda)$ is
reducible and has a proper submodule $\mathfrak{M}$ generated by $M_{\lambda - k \delta}$, $k>0$.  The quotient $M'(\lambda)=M(\lambda)/\mathfrak{M}$ is irreducible if and
only if $\lambda (h_0) \neq 0$ (see Proposition \ref{prop-imag}). Recall that by Theorem \ref{th-im-ver-quot},  $M'(\lambda)$ is isomorphic to ${\mathbb C} [{\bf x}]$ through $\Phi'$. We have the following result for the twisted localization of $M'(\lambda)$, the proof of which is identical to the proof of Theorem \ref{main5} with the only difference that the polynomials considered in the proof are in $\C[x_i^{{\pm1}},\bf{\widehat{x}_i}]$ instead of $\C[x_i^{{\pm1}},\bf{\widehat{x}_i},\bf{y}]$.

\begin{thm}\label{main6} Let $i \in \Z$, $ z \in \C$, and $\lambda \in {\mathfrak h}^*$ be such that $\lambda (c) = 0$ and $\lambda(h_0) = J$.
\begin{itemize}
\item[(i)] Let  $ z\in\mathbb Z$. Then $\cD_i^ z M'(\lambda) \simeq \cD_i M'(\lambda)$ and  $\cD_i M'(\lambda)/ M'(\lambda)$ is  irreducible if and only if $J \notin \mathbb Z$. In the latter case, $\cD_i M'(\lambda)/ M'(\lambda)$ is a dense module in $\mathcal W(- \beta + i \delta)$ all of whose weight multiplicities are infinite.

\item[(ii)] Let $ z \notin \Z$. Then $\cD_i^ z M(\lambda)$ is irreducible if and only if $z - J \notin {\mathbb Z}$. In the latter case, $\cD_i^ z M'(\lambda)$ is a dense torsion free module all of whose weight multiplicities are infinite. 

\item[(iii)] $\cD_{i_1}^{z_1} M'(\lambda)\simeq \cD_{i_2}^{z_2} M'(\lambda)$ if and only if $z_1- z_2\in\mathbb Z $ and $i_1=i_2$.
\end{itemize}
\end{thm}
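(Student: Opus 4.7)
The plan is to mirror the proof of Theorem \ref{main5} almost verbatim, substituting the ${\mathcal A}({\bf x})$-module $\C[{\bf x}]$ realized through $\Phi'$ for the ${\mathcal A}({\bf x},{\bf y})$-module $\C[{\bf x},{\bf y}]$ realized through $\Phi$. Concretely, $\cD_i M'(\lambda)$ is identified with $\C[x_i^{\pm 1},\widehat{\bf x}_i]$ and $\cD_i^z M'(\lambda)$ with $x_i^z\C[x_i^{\pm 1},\widehat{\bf x}_i]$, and these are handled the same way; the simplifications come from the absence of the ${\bf y}$-variables, which matches the fact that in Theorem \ref{main}(ii) the irreducibility criterion is $J\notin\mathbb Z$ rather than $J-iK\notin\mathbb Z$.

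For (i), if $z\in\mathbb Z$ the twist $\cD_i^z M'(\lambda)\simeq\cD_i M'(\lambda)$ is immediate from the natural identification $m\mapsto(f_i^z\cdot m)^z$ recalled in \S\ref{gencon}. Then $\cD_i M'(\lambda)/M'(\lambda)\simeq M_i=\Phi'(i)$ as ${\mathcal A}({\bf x})$-modules, so Theorem \ref{main}(ii) gives irreducibility precisely when $J\notin\mathbb Z$. The vector-space isomorphism $M_i\simeq x_i^{-1}\C[x_i^{-1},\widehat{\bf x}_i]$ shows the module is dense and has infinite weight multiplicities. Finally, to see it lies in $\mathcal W(-\beta+i\delta)$, it suffices to check which real-root generators act locally nilpotently on the cyclic vector $x_i^{-1}$; a direct inspection of $\Phi'$ shows $e_{-i}$ (corresponding to $-\beta+i\delta$) is locally nilpotent on $x_i^{-1}$, while every other $e_n$ and $f_n$ acts injectively.

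For (ii), the reducibility direction is handled as in the proof of Theorem \ref{main5}(ii): if $N=z-J\in\mathbb Z$, then one checks directly from the formula for $\Phi'(e_{-i})$ that $e_{-i}(x_i^{z-N+1})=0$, so the span of elements on which $e_{-i}$ acts locally nilpotently is a nonzero proper submodule (using that for generic $v=x_i^z p$ with $\deg_x p\geq 1$ one has $e_{-i}^k(v)\neq 0$ for all $k$, as in Lemma \ref{degi+}). For the other direction, assume $z-J\notin\mathbb Z$. First extend Lemma \ref{prop4.10} to noninteger exponents (the $K=0,\,S=\emptyset$ analogue, as in Remark \ref{non-int}) to get $U(x_i^z)=\cD_i^z M'(\lambda)$. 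Then prove that for every nonzero $v$ there is $u\in U$ with $u(v)=x_i^z$ in three steps: reduce to $\deg_x$-homogeneous $v$ via Lemma \ref{h0D}; for such $v$ with $f_ie_{-i}v=A_{\alpha,-z}v$ (where $\alpha=\deg_x p$), derive the analogue of formula (\ref{e-i-k}) and combine with simplicity of $\C[{\bf x}]$ (when $J\notin\mathbb Z$, guaranteed since $z-J\notin\mathbb Z$ and $z\notin\mathbb Z$) together with Lemma \ref{lem-conj}; finally reduce the general $\deg_x$-homogeneous case to the eigenvector case by iteratively applying $(f_ie_{-i}-A_{\alpha',-z})$, which strictly lowers $\deg_x$ until one lands in degree zero, where the eigenvector property holds automatically. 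Torsion freeness is immediate once irreducibility is established, since every real root element acts injectively on some weight vector (the locally nilpotent elements for any single $e_n$ or $f_n$ would otherwise constitute a proper submodule).

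Part (iii) is just support bookkeeping via Lemma \ref{sup-lemma}: any isomorphism forces $\lambda+\mathbb Z(-\beta+i_1\delta)+z_1\mathbb Z\delta=\lambda+\mathbb Z(-\beta+i_2\delta)+z_2\mathbb Z\delta$ (the only subtlety is reading off the coefficient of $\beta$ and the coefficient of $\delta$ separately, which immediately gives $i_1=i_2$ and $z_1-z_2\in\mathbb Z$), and the converse is trivial. The main obstacle will be step two of part (ii), namely verifying the eigenvector identity for $f_ie_{-i}$ and keeping track of the nonvanishing of $A_{\alpha,-z}A_{\alpha,-z+1}\cdots A_{\alpha,-z+k-1}$; but since $z\notin\mathbb Z$ and $z-J\notin\mathbb Z$, each $A_{\alpha,-z+j}=(z-j)(z-j+1-2\alpha+J)$ is nonzero, and the argument from Theorem \ref{main5}(ii) transfers with no further complication.
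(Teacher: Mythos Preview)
Your proposal is correct and follows exactly the paper's own approach: the paper states that the proof of Theorem~\ref{main6} is identical to that of Theorem~\ref{main5}, with polynomials in $\C[x_i^{\pm1},\widehat{\bf x}_i]$ in place of $\C[x_i^{\pm1},\widehat{\bf x}_i,{\bf y}]$, and that is precisely what you do.

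One slip to correct in part (i): the root element corresponding to $-\beta+i\delta$ is $f_i$, not $e_{-i}$ (the latter lies in the root space $\beta-i\delta$). It is $f_i$, acting as multiplication by $x_i$, that is locally nilpotent on $x_i^{-1}$ in $M_i\simeq x_i^{-1}\C[x_i^{-1},\widehat{\bf x}_i]$; by contrast $e_{-i}$ sends $x_i^{-1}$ to a nonzero multiple of $x_i^{-2}$ and is not locally nilpotent there. With this fix, your verification that the module belongs to $\mathcal W(-\beta+i\delta)$ goes through exactly as in the proof of Theorem~\ref{main5}(i).
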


\begin{remark}
Theorems \ref{main5}  and \ref{main6}  provide a tool to explicitly construct  new irreducible  weight dense modules for  $\widehat{\mathfrak{sl}}_2$. All such modules have  infinite weight multiplicities. 
If $\mathfrak G$ is an arbitrary affine Lie algebra then one can consider a parabolic subalgebra of $\mathfrak G$ whose Levi factor is isomorphic to  $\widehat{\mathfrak{sl}}_2+ \mathfrak H$, where $\mathfrak H$ is a Cartan subalgebra of $\mathfrak G$ containing $\mathfrak h$. Then defining an ${\mathfrak H}$-module structure on $M_S, M_{i,S}, M_i, \cD_i^ z M_S, \cD_i^ z  \mathbb C [{\bf x}]$ (K=0 for the latter two), one can apply the induction functor on these modules. By taking the unique irreducible quotients of the induced modules, we will obtain new irreducible modules with infinite weight multiplicities for any affine Lie algebra.
\end{remark}

\section{Acknowledgment}
 The first author was supported in part by
the CNPq grant (301320/2013-6) and by the FAPESP grant
(2014/09310-5).  The second author was supported by FAPESP grant (2011/21621-8) and NSA grant H98230-13-1-0245. The third author was supported by FAPESP grant
(2012/02459-8). This work was completed during the second author's stay at IH\'ES and he would like to thank the institute for the warm hospitality and  excellent working conditions.

\end{document}